\renewcommand{\epsilon}{\varepsilon}            %%  changes epsilon
\newcommand{\R}{\ensuremath{\mathbb{R}}}
\newtheorem{theorem}{Theorem}[section]   %% definition of theorem environment
\newtheorem*{theorem*}{Theorem}          %% a theorem environment without numbering
\newtheorem{lemma}[theorem]{Lemma}
\newtheorem{proposition}[theorem]{Proposition}
\theoremstyle{definition}
\newtheorem{corollary}[theorem]{Corollary}
\newtheorem{example}[theorem]{Example}
\newtheorem*{remark}{Remark}
\numberwithin{equation}{section}
\title[]{Minimal Lagrangian submanifolds of the complex hyperquadric}
\thanks{}
\author{Haizhong Li \and Hui Ma \and Joeri Van der Veken \and Luc Vrancken \and Xianfeng Wang}
\address{H.~Li \and H.~Ma, Department of Mathematical Sciences, Tsinghua University, Beijing, 100084, P.R. China}
\email{lihz@tsinghua.edu.cn, ma-h@tsinghua.edu.cn}
\address{J. Van der Veken \and L.~Vrancken, KU\ Leuven, Department of Mathematics, Celestijnenlaan 200B -- Box 2400, 3001 Leuven, Belgium}
\email{joeri.vanderveken@kuleuven.be, luc.vrancken@kuleuven.be}
\address{L.~Vrancken, Universit\'e Polytechnique Hauts-de-France, Campus du Mont Houy, 59313 Valenciennes Cedex 9, France}
\email{luc.vrancken@uphf.fr}
\address{X. Wang, School of Mathematical Sciences and LPMC,
Nankai University,
Tianjin, 300071,  P. R. China; Mathematical Sciences Institute,
Australian National University, Canberra,
ACT 2601 Australia}
\email{wangxianfeng@nankai.edu.cn, xianfeng.wang@anu.edu.au}
\subjclass[2010]{Primary 53C42, 53D12; Secondary 53B25}
\keywords{minimal Lagrangian submanifolds, the complex hyperquadric, constant sectional curvature, Gauss map, isoparametric hypersurface}
\date{}
\begin{document}

\begin{abstract}
We introduce a structural approach to study Lagrangian submanifolds of the complex hyperquadric in arbitrary dimension by using its family of non-integrable almost product structures. In particular, we define local angle functions encoding the geometry of the Lagrangian submanifold at hand. We prove that these functions are constant in the special case that the Lagrangian immersion is the Gauss map of an isoparametric hypersurface of a sphere and give the relation with the constant principal curvatures of the hypersurface. We also use our techniques to classify all minimal Lagrangian submanifolds of the complex hyperquadric which have constant sectional curvatures and all minimal Lagrangian submanifolds for which all, respectively all but one, local angle functions coincide.
\end{abstract}

\maketitle

%%%%%%%%%%%%%%%%%%%%%%
\section{Introduction}
%%%%%%%%%%%%%%%%%%%%%%

In this paper, we investigate the geometry of Lagrangian submanifolds of the  complex hyperquadric $Q^n$, which is a homogeneous complex $n$-dimensional K\"ahler manifold. The study of Lagrangian submanifolds originates from symplectic geometry and classical mechanics. Let $(N,\omega)$ be a $2n$-dimensional symplectic manifold with a symplectic form $\omega$, we call a submanifold $f:M\to(N,\omega)$ Lagrangian, if $f^{*}\omega=0$ and the dimension of $M$ is half the dimension of $N$. In particular, if $N$ is a K\"ahler manifold, then $N$ admits complex, Riemannian and symplectic structures which are compatible with each other, and the condition $f^{*}\omega=0$ is equivalent to the complex structure $J$ of $N$ interchanging the tangent and the normal spaces. The study of Lagrangian submanifolds of K\"ahler manifolds is a classic topic and was initiated in the 1970's by Chen and Ogiue~\cite{CO1974}. For a review on Riemannian geometry of Lagrangian submanifolds we refer to~\cite{Chen2001,Chen2011} and the references therein. The simplest examples of K\"ahler manifolds are complex space forms which have constant holomorphic sectional curvatures, and the geometry of Lagrangian submanifolds of complex space forms have been widely studied and well understood in some sense. Meanwhile, Lagrangian submanifolds of other K\"ahler manifolds have not been deeply understood.

The complex hyperquadric $Q^n$ is a  compact complex hypersurface of the complex projective space $\mathbb C P^{n+1}$ defined by the homogeneous quadratic equation $z_0^2 +z_1^2 + \ldots + z_{n+1}^2 = 0$. It can be identified with the Grassmann manifold  of oriented $2$-planes, is a compact Hermitian symmetric space of rank $2$ and provides a very good example of a K\"ahler-Einstein manifold. There is a fundamental fact which concerns the relation between Lagrangian geometry of the complex hyperquadric and hypersurface geometry of the unit sphere, that is, the Gauss map of any oriented hypersurface of the unit sphere $S^{n+1}$ is always a Lagrangian submanifold  of the complex hyperquadric $Q^n$.  In a special case, given an isoparametric hypersurface of the unit sphere, one can get a minimal Lagrangian submanifold of the complex hyperquadric by using its Gauss map. The minimality can be proved by applying Palmer's nice formula involving the mean curvature form of the Lagrangian submanifold of the complex hyperquadric and the principal curvatures of the hypersurface of the unit sphere (see \cite{Palmer1994,Palmer}). The geometry of Lagrangian submanifolds of $Q^n$ obtained by the Gauss map of isoparametric hypersurfaces of the unit sphere have been systematically studied from the point of view of Lie theory and Hamiltonian deformation theory by the second author and Ohnita (see \cite{MaOhnita1,MaOhnita2,MaOhnita3}), where they obtain a classification of all compact homogeneous Lagrangian submanifolds of the complex hyperquadric and determine the Hamiltonian stability for the Gauss maps of all the known homogeneous isoparametric hypersurfaces. Notwithstanding, the geometry of Lagrangian submanifolds of the complex hyperquadric is far from being well understood, especially from the geometric point of view. This motivates our study and the aim of this paper is to understand the geometry of Lagrangian submanifold of the complex hyperquadric in a more geometric way.

It is well known that $Q^n$ carries a family of almost product structures, see for example \cite{R1}. In this paper, we introduce a new structural approach to study Lagrangian submanifolds of $Q^n$. By using one of the almost product structures, we define two symmetric operators and $n$ local \textit{angle functions} on every such Lagrangian submanifold. It turns out that these angle functions have very nice relations with the second fundamental form and can determine most of the geometry of the Lagrangian submanifold. By use of our new approach, we obtain a correspondence theorem between minimal Lagrangian submanifolds of $Q^n$ with constant angle functions and the Gauss maps of isoparametric hypersurfaces of $S^{n+1}$.
\begin{theorem} \label{theo1}
	Let $a: M^n \to S^{n+1}(1)$ be an isoparametric hypersurface with unit normal $b$ and principal curvatures $\lambda_1,\ldots,\lambda_n$. Then the Gauss map $G: M^n \to Q^n$ is a minimal Lagrangian immersion and the difference between any two local angle functions is constant. Moreover,
	\begin{itemize}
		\item [(i)]if the almost product structure $A \in \mathcal A$ is chosen as in Example \ref{ex1} for the canonical horizontal lift $\hat G$ given in \eqref{def_Ghat}, then all the angle functions $\theta_1,\ldots,\theta_n$ are constant and, when put in the right order, they are given by
		\begin{equation} \label{relation_lambdaj_thetaj_1}
			\lambda_j = \cot \theta_j;
		\end{equation}
		\item [(ii)] if the almost product structure $A \in \mathcal A$ is chosen as in Example \ref{ex2}, then again all the angle functions are constant and, when put in the right order,
		\begin{equation} \label{relation_lambdaj_thetaj_2}
			\lambda_j = \cot(\theta_j+c)
		\end{equation}
		for a real constant $c$ which is independent of the index $j$.
	\end{itemize}
	Conversely, consider a minimal Lagrangian immersion $f: M^n \to Q^n$ of a simply connected manifold with constant angle functions $\theta_1,\ldots,\theta_n$. Then for every real constant $c$ with $\sin(\theta_j+c) \neq 0$ for all $j=1,\ldots,n$, there is an isoparametric immersion $M^n \to S^{n+1}(1)$ with Gauss map $f$, whose principal curvatures are given by \eqref{relation_lambdaj_thetaj_2}.
\end{theorem}

By applying Theorem \ref{theo1}, we can find all minimal Lagrangian submanifolds with constant local angle functions starting from isoparametric hypersurfaces of the unit sphere, see Corollary \ref{cor2}. We also obtain the classification of Lagrangian submanifolds which are totally geodesic (Theorem \ref{theoTG}) and the classification of  minimal Lagrangian submanifolds for which all local angle functions are the same (Theorem \ref{theoSameAngles}). Although Theorem \ref{theo1} is stated only in the case of constant angle functions, from the proof of this theorem, one can get similar local correspondence between a general minimal Lagrangian submanifold of $Q^n$ and the Gauss map of a hypersurface of $S^{n+1}$, with the same relation between the angle functions of the Lagrangian submanifold and the principal curvatures of the hypersurface, while the angle functions and the principal curvatures  are not necessarily constant.

Note that the classification of  minimal Lagrangian submanifolds of constant  sectional curvature in complex space forms is a classic result proved  by Ejiri \cite{Ejiri1982}, using essentially the Gauss and Codazzi equations.  In the case of the complex hyperquadric $Q^n$, it is impossible to get a similar classification following Ejiri's method, as the curvature tensor of $Q^n$ is much more complicated than that of a complex space form and the Gauss equations become very difficult to solve directly. Therefore, we use a quite different approach, by making full use of the angle functions. We obtain the following classification theorem.

\begin{theorem} \label{theoCSCconclusion}
	let $f:M^n \to Q^n$, $n \geq 2$, be a minimal Lagrangian immersion such that $M^n$ has constant sectional curvature $c$. Then $f$ is one of the following:
	\begin{itemize}
		\item[(i)] $f$ is the Gauss map of a part of the standard embedding $S^n(r) \to S^{n+1}(1)$;
		\item[(ii)] $n=2$ and $f$ the Gauss map of a part of the standard embedding $S^1(r_1) \times S^1(r_2) \to S^3(1)$;
		\item[(iii)] $n=3$ and $f$ is the Gauss map of a part of a tube around the Veronese surface in $S^4(1)$.
	\end{itemize}
	The constant sectional curvatures in these three cases are $c=2$, $c=0$ and $c=1/8$ respectively.
\end{theorem}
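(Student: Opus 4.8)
The plan is to leverage Theorem \ref{theo1} to convert the problem about minimal Lagrangian submanifolds of constant curvature into a problem about isoparametric hypersurfaces of the sphere, where the classification is completely understood by the theory of Cartan, M\"unzner and others. The starting observation is that by Theorem \ref{theo1}, once I can show that the angle functions $\theta_1,\ldots,\theta_n$ are \emph{constant}, the immersion $f$ must be (locally) the Gauss map of an isoparametric hypersurface $M^n \to S^{n+1}(1)$, whose principal curvatures are $\lambda_j = \cot(\theta_j+c)$. The number of distinct constant principal curvatures of an isoparametric hypersurface is restricted by M\"unzner's theorem to be $g \in \{1,2,3,4,6\}$, and in the low codimension situations relevant here the classification is explicit. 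So the crux is really the \emph{constancy} of the angle functions, not the final enumeration.

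First I would set up the structure equations adapted to the almost product structure $A$: using the two symmetric operators and the angle functions introduced earlier in the paper, I would write down how the second fundamental form $h$ of the Lagrangian immersion decomposes, together with the expressions for the curvature tensor of $Q^n$ restricted to $M^n$. The minimality hypothesis gives $\operatorname{tr} h = 0$, and the constant sectional curvature hypothesis $K \equiv c$ gives, via the Gauss equation, a strong algebraic constraint relating $h$, the angle functions, and $c$. The key step is to differentiate these relations (using the Codazzi equation and the covariant derivatives of the angle functions, which are controlled by the earlier structural identities) and extract a system of PDEs for the $\theta_j$. I expect that combining the Gauss equation, the Codazzi equation, and minimality will force the $\theta_j$ to be locally constant, possibly after first ruling out degenerate configurations where two or more angle functions coincide on an open set.

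The hard part will be handling the interaction between repeated angle functions and the corresponding eigenspace distributions. When several $\theta_j$ agree, the operators are no longer diagonalizable with distinct eigenvalues, the adapted frame is not unique, and one must argue more carefully that the relevant distributions are parallel and that the constant-curvature condition still forces constancy; this is exactly the kind of situation where one invokes the companion results on ``all, respectively all but one, angle functions coincide'' (Theorem \ref{theoSameAngles} and its variants). I would treat the generic case of pairwise distinct angles first, where the frame is essentially canonical and the computation is cleanest, and then dispose of the coincidence cases by the separate structural analysis, so that in every case the angle functions come out constant.

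Once constancy is established, the remainder is bookkeeping. By Theorem \ref{theo1} the immersion is the Gauss map of an isoparametric hypersurface with $g$ distinct principal curvatures, where $g$ equals the number of distinct values among $\cot(\theta_j+c)$. The constant sectional curvature $c$ of $M^n$, re-expressed through the Gauss equation in terms of the $\lambda_j$, severely limits the admissible $(g,\text{multiplicities})$; I would check that only the homogeneous examples survive, namely the totally umbilic case $g=1$ (a small sphere $S^n(r)\hookrightarrow S^{n+1}(1)$, giving $c=2$), the flat product $S^1(r_1)\times S^1(r_2)\hookrightarrow S^3(1)$ in dimension $n=2$ with $g=2$ (giving $c=0$), and the tube around the Veronese surface in $S^4(1)$ in dimension $n=3$ with $g=3$ (giving $c=1/8$). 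A direct substitution of the known principal curvatures of each isoparametric family into the Gauss equation both identifies the example and computes the stated value of $c$, completing the classification.
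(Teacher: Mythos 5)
Your plan identifies the right target (constancy of the angle functions, then reduction to the isoparametric classification via Theorem \ref{theo1}), but it leaves the decisive step unproved: you write that you ``expect'' the Gauss equation, the Codazzi equation and minimality to force the $\theta_j$ to be constant, and that is precisely where all the difficulty lies. The paper does not get constancy from Gauss plus Codazzi alone. The key ingredient is Lemma \ref{lem5.1}: one takes a further covariant derivative of the Codazzi equation, cyclically sums, and uses the Ricci identity together with the constant-curvature hypothesis to show that the cyclic sum of $\overline\nabla T$ vanishes, which produces \emph{algebraic} (zeroth-order) identities \eqref{CSCeq0}--\eqref{CSCeq3} linking the $h_{ij}^k$ to products of sines of the angle differences. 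These identities, not the first-order relations \eqref{intcond1}--\eqref{intcond2}, are what drive the proof: they give Proposition \ref{propCSC1} (the angles are either all equal or pairwise distinct mod $\pi$) and Proposition \ref{propCSC2} (a nonzero $h_{ij}^k$ with distinct indices forces $n=3$ and the Veronese tube), and even then the paper still needs separate, fairly long case analyses for $n=3$, $n=4$ and $n\geq 5$, in which constancy of the angles emerges at the end of each branch rather than being established upfront. Your proposal contains no substitute for this mechanism, so the core of the argument is missing.

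Two further points are understated. First, the $n=2$ case is not handled in the paper by your scheme at all: it uses the isometry $Q^2\cong S^2(1/2)\times S^2(1/2)$ and quotes the Castro--Urbano classification of constant-curvature minimal Lagrangian surfaces there. Second, your final ``bookkeeping'' step is not a direct substitution of principal curvatures: by \eqref{seccurv} with constant angles, $K_{ij}=2\cos^2(\theta_i-\theta_j)-\sum_k(h_{ij}^k)^2$, so deciding which isoparametric families (in particular $g=2$ with $n\geq 3$, and $g=4,6$) are excluded requires control of the off-diagonal components $h_{ij}^k$ with three distinct indices, which are not determined by the $\lambda_j$ alone; the paper avoids this by having already shown, via Lemma \ref{lem5.1}, that such components vanish unless $n=3$.
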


As another successful application of our new techniques, we classify all minimal Lagrangian submanifolds for which all but one local angle functions coincide.
\begin{theorem} \label{theoAllButOneSameAngles}
	Let $f:M^n \to Q^n~(n\geq3)$ be a minimal Lagrangian submanifold  for which $n-1$ local angle functions are the same. If $A \in \mathcal A$ is chosen as in Example \ref{ex2}, then $\theta_1=(n-1)\alpha \mod\pi$ and $\theta_2=\cdots=\theta_n=-\alpha \mod\pi$ for some  local function $\alpha$.
	\begin{itemize}
		\item[(i)] If $\alpha=0\mod\pi$, then  $f$ is the Gauss map of a part of the standard embedding $S^{n}(r)\to S^{n+1}(1)$.
		\item[(ii)] If $\alpha$ is a non-zero constant modulo $\pi$, then  $f$ is the Gauss map of a part of the standard embedding $S^{1}(r_1)\times S^{n-1}(r_2)\to S^{n+1}(1)$.
		\item[(iii)] If $\alpha$ is not a constant modulo $\pi$, then  $M^n$ must be a warped product $I\times_\rho S^{n-1}(1)$ with $\rho(\alpha)=|c_1(\sin n\alpha)^{-\frac{1}{n}}|$ for some positive constant $c_1$, and the angle function $\alpha$ satisfies the following first order ordinary differential equation:
		\begin{equation}\label{6.1}
			(c_1(\sin (n\alpha))^{-\frac{1}{n}})^2(2+(\frac{d \alpha}{ds})^2(\sin{n\alpha})^{-2})=1,	\end{equation}
		where $\frac{d}{d s}=e_1$ is the tangent vector on the base $I$ of the warped product $ I\times_\rho S^{n-1}(1)$.
		Moreover, $f$ is locally isometric to the Gauss map of a rotational hypersurface of $S^{n+1}(1)$ with the profile curve $\gamma(\theta)\subset S^2(1)$ given by
		\begin{equation}\label{6.2}
			\gamma(\theta)=\big(-\sin{\alpha}\sqrt{1-(\frac{d\alpha}{d\theta})^2},\cos{\alpha}\sin\theta-\sin{\alpha}\cos{\theta} \frac{d\alpha}{d\theta}, -\cos{\alpha}\cos\theta-\sin{\alpha}\sin{\theta} \frac{d\alpha}{d\theta} \big),
		\end{equation}
		where $\alpha$ is the angle function and satisfies the following second order ordinary differential equation:
		\begin{equation}\label{6.3}
			\frac{d^2\alpha}{d\theta^2}=(1-(\frac{d\alpha}{d\theta})^2)\cot{(n\alpha)},~|\frac{d\alpha}{d\theta}|<1.
		\end{equation}
	\end{itemize}
\end{theorem}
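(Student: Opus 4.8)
The plan is to first pin down the stated normal form of the angle functions, then dispatch the two constant-angle cases by reduction to the isoparametric classification, and finally treat the genuinely non-constant case by constructing a warped-product structure. First I would establish that the $n-1$ coinciding angles may be written as $\theta_2=\cdots=\theta_n=-\alpha\pmod\pi$ for a local function $\alpha$, and that minimality forces $\theta_1=(n-1)\alpha\pmod\pi$. This uses the minimality equation for the angle functions derived earlier: in the normalization of Example \ref{ex2}, minimality is equivalent to $\sum_{j=1}^n\theta_j\equiv 0\pmod\pi$, and combining this with the hypothesis that $n-1$ of the $\theta_j$ coincide gives the claim at once. I would then fix the associated orthonormal frame $\{e_1,\ldots,e_n\}$ of eigenvectors, with $e_1$ attached to $\theta_1$ and $\mathcal D=\mathrm{span}(e_2,\ldots,e_n)$ spanning the $(-\alpha)$-eigendistribution. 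The coincidence of $n-1$ angles forces the (totally symmetric) second fundamental form into a rotationally symmetric normal form: through the relations between the covariant derivative of the angle functions and $h$, most components of $h$ vanish and the rest are expressed in terms of $\alpha$ and $e_1(\alpha)$; in particular one expects $e_i(\alpha)=0$ for $i\ge 2$, so $\alpha$ varies only along $e_1$.

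Cases (i) and (ii) both have $\alpha$ constant modulo $\pi$, so all angle functions are constant, and by Theorem \ref{theo1} $f$ is the Gauss map of an isoparametric hypersurface of $S^{n+1}(1)$ with principal curvatures $\lambda_j=\cot(\theta_j+c)$. When $\alpha\equiv 0$ all $\theta_j$ coincide, hence all $\lambda_j$ coincide and the hypersurface is totally umbilic, i.e. a standard $S^n(r)\hookrightarrow S^{n+1}(1)$, giving case (i). When $\alpha$ is a nonzero constant the hypersurface has exactly two distinct principal curvatures, of multiplicities $1$ and $n-1$; by Cartan's classification of isoparametric hypersurfaces with two distinct principal curvatures it is a standard product $S^1(r_1)\times S^{n-1}(r_2)\hookrightarrow S^{n+1}(1)$, giving case (ii).

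For case (iii), with $\alpha$ non-constant, I would use the frame analysis to show that $\mathcal D$ is an integrable, totally umbilic (spherical) distribution whose orthogonal complement $\mathrm{span}(e_1)$ is autoparallel. A Hiepko/de Rham warped-product decomposition then yields $M^n\cong I\times_\rho S^{n-1}(1)$ with $\frac{d}{ds}=e_1$. Computing the intrinsic sectional curvatures of the leaves from the Gauss equation and the explicit form of $h$, together with the minimality (trace-free) condition, produces a first-order ODE for $\rho$ as a function of $\alpha$ whose solution is $\rho=|c_1(\sin n\alpha)^{-\frac1n}|$; re-substituting into the normalization of the warped metric gives the first-order equation \eqref{6.1}. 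Finally, to obtain the geometric model I would invoke the local correspondence behind Theorem \ref{theo1}: $f$ is the Gauss map of a hypersurface $a:M^n\to S^{n+1}(1)$ with principal curvatures $\cot(\theta_j+c)$, so $a$ has a simple principal curvature and one of multiplicity $n-1$, both non-constant. A hypersurface with a principal curvature of constant multiplicity $n-1$ is rotational, with profile curve in a totally geodesic $S^2(1)\subset S^{n+1}(1)$; translating the rotational ansatz and the relation $\lambda_j=\cot(\theta_j+c)$ into an equation for the curve yields the parametrization \eqref{6.2} and the second-order ODE \eqref{6.3}.

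The main obstacle is the heart of case (iii): extracting from the structure equations the precise second fundamental form under the imposed symmetry, proving the warped-product splitting rigorously (establishing integrability and sphericality of $\mathcal D$ and autoparallelism of $\mathrm{span}(e_1)$), and then carrying out the coupled curvature-plus-minimality computation that produces the exact warping function and the ODEs \eqref{6.1}--\eqref{6.3}. The constant-angle cases, by contrast, reduce cleanly to known classifications once the normal form of the angle functions is in hand.
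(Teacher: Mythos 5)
Your proposal follows essentially the same route as the paper's proof: the normal form $\theta_1=(n-1)\alpha$, $\theta_2=\cdots=\theta_n=-\alpha$ from the normalization of Example~\ref{ex2}, reduction of cases (i) and (ii) to the isoparametric classification via Theorem~\ref{theo1} and Corollary~\ref{cor2}, and for case (iii) the extraction of the second fundamental form from \eqref{intcond1}--\eqref{intcond2}, the autoparallel/spherical splitting and Hiepko's theorem, and the do Carmo--Dajczer rotational-hypersurface theorem for the explicit model. The one ingredient you do not name explicitly is that proving the fibre $N^{n-1}$ is a \emph{round} sphere (hence obtaining \eqref{6.1}) requires the second-order ODE \eqref{6.5} for $\alpha$ coming from the Codazzi equation, which is also the source of \eqref{6.3}; this is, however, subsumed in the computation you correctly identify as the heart of the argument.
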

As proposed in \cite{siffert}, one can expect that our approach via Lagrangian submanifolds of $Q^n$, will provide some new understanding of the isoparametric theory in the unit sphere.  For more results about isoparametric theory and its applications, we refer to \cite{GT}, \cite{QT}, \cite{TY}
and the references therein.

The paper is organized as follows. In Section 2, we review some basic definitions and properties of the complex hyperquadric $Q^n$. In Section 3, starting from an almost product structure on $Q^n$, we first define two endomorphisms on the tangent spaces of a Lagrangian submanifold $M^n$ of $Q^n$, and then introduce $n$ local angle functions on $M$.  We give two examples to show that we can choose appropriate almost product structures to make the angle functions having special properties. We prove some relations  among the angle functions, the second fundamental form and the Levi-Civita connection form. We also give the Gauss and Codazzi equations for a Lagrangian submanifold of $Q^n$. In Section 4, we study the correspondence between Lagrangian submanifolds of $Q^n$ and hypersurfaces of $S^{n+1}$, and prove Theorem \ref{theo1}. Section 5 is devoted to the proof of Theorem \ref{theoCSCconclusion}, where Lemma \ref{lem5.1} is the key  step, which gives us more information about the second fundamental form and the angle functions, under the assumption of constant sectional curvature. In Section 6, we classify all minimal Lagrangian submanifolds with $n-1$ equal  local angle functions by proving Theorem  \ref{theoAllButOneSameAngles}.

\textbf{Acknowledgments:} This research was supported by the Tsinghua University - KU Leuven Bilateral Scientic Cooperation Fund and by the NSFC-FWO grant No.11961131001. H.~Li is supported by NSFC No.11831005 and No.11671224. H.~Ma is supported by NSFC No.11831005 and No.11671223. J. Van der Veken and L. Vrancken are supported by project 3E160361 of the KU Leuven Research Fund. J. Van der Veken is supported by the Excellence Of Science project G0H4518N of the Belgian government. X. Wang is supported by NSFC No.11571185 and the Fundamental Research Funds for the Central Universities, and she would also like to express her deep gratitude to the Mathematical Sciences Institute at the Australian National University for its hospitality and to Prof.  Ben Andrews for his encouragement and help during her stay in MSI of ANU as a Visiting Fellow, while part of this work was completed. The authors would also like to thank the referees for carefully reading of this paper and providing some helpful suggestions.

%%%%%%%%%%%%%%%%%%%%%%%%%%%%%%%%%%%%%%%%%%%%%
\section{The geometry of the complex hyperquadric}
%%%%%%%%%%%%%%%%%%%%%%%%%%%%%%%%%%%%%%%%%%%%%

Let $\mathbb{C} P^{n+1}(4)$ be the complex projective space of complex dimension $n+1$ equipped with the Fubini-Study metric $g_{FS}$ of constant holomorphic sectional curvature $4$. Then the Hopf fibration
\begin{equation} \label{def_pi}
	\pi : S^{2n+3}(1) \subset \mathbb{C}^{n+2} \to \mathbb{C} P^{n+1}(4): z \mapsto [z]
\end{equation}
is a Riemannian submersion from the unit sphere of real dimension $2n+3$ to $\mathbb{C} P^{n+1}(4)$. Remark that for any $z \in S^{2n+3}(1)$ we have $\pi^{-1}\{[z]\} = \{e^{\sqrt{-1}t}z \ | \ t\in\R\}$ and $\ker (d\pi)_z = \mathrm{span} \{\sqrt{-1}z\}$. The complex structure $J$ on $\mathbb{C} P^{n+1}(4)$ is induced from multiplication by $\sqrt{-1}$ on $TS^{2n+3}(1)$ and it is well-known that $(\mathbb{C} P^{n+1}(4), g_{FS}, J)$ is a K\"ahler manifold.

We define the \textit{complex hyperquadric} of complex dimension $n$ as the following complex hypersurface of $\mathbb{C} P^{n+1}(4)$:
\begin{equation} \label{def_Qn}
	Q^n = \{ [(z_0,z_1,\ldots,z_{n+1})] \in \mathbb{C} P^{n+1}(4) \ | \ z_0^2 +z_1^2+ \cdots + z_{n+1}^2 = 0 \}.
\end{equation}
If $Q^n$ is equipped with the induced metric $g_{FS}|_{Q^n}$, which we will denote by $g$, and the induced almost complex structure $J|_{Q^n}$, which we will again denote by $J$, then $(Q^n,g,J)$ is of course a K\"ahler manifold itself. The inverse image of $Q^n$ under the Hopf fibration is the Stiefel manifold
\begin{equation}
	V_2(\R^{n+2}) = \left\{ u+\sqrt{-1}v \ \left| \ u,v\in\R^{n+2}, \ \langle u,u \rangle = \langle v,v \rangle = \frac 12, \ \langle u,v \rangle = 0 \right. \right\} \subset S^{2n+3}(1)
\end{equation}
of real dimension $2n+1$, where $\langle \cdot,\cdot \rangle$ denotes the Euclidean inner product on $\R^{n+2}$.

The normal space to $V_2(\R^{n+2})$ in $S^{2n+3}(1)$ at a point $z$ is spanned by $\bar z$ and $\sqrt{-1}\bar z$, which implies that the normal space to $Q^n$ in $\mathbb{C} P^{n+1}(4)$ at a point $[z]$ is spanned by $(d\pi)_z(\bar z)$ and $J(d\pi)_z(\bar z)=(d\pi)_z(\sqrt{-1}\bar z)$, where $z$ is any representative of $[z]$. Remark that these vectors depend on the chosen representative $z$. We denote by $\mathcal A$ the set of all shape operators of $Q^n$ in $\mathbb{C} P^{n+1}(4)$ associated with unit normal vector fields. Then $\mathcal A$ is a collection of $(1,1)$-tensor fields on $Q^n$ and one can deduce the following (see for example \cite{R1} or \cite{Smyth1967})).

\begin{lemma} \label{lem1}
	Any $A \in \mathcal A$ satisfies
	\begin{itemize}
		\item[(i)] $A^2=\text{Id}$,  i.e.,  $A$ is involutive.
		\item[(ii)] $A$ is symmetric.
		\item[(iii)] $A$ anti-commutes with $J$.
	\end{itemize}
\end{lemma}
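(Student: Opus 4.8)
The plan is to lift the computation to the sphere $S^{2n+3}(1)$ and to identify the shape operator with a sign times complex conjugation, after which all three statements become transparent; the passage from one distinguished normal to an arbitrary one is then a short algebraic step. Fix $[z] \in Q^n$ with a representative $z = u + \sqrt{-1}v \in V_2(\R^{n+2})$, so that $z\cdot z = 0$ and $\langle z,z\rangle_{\C}=1$, where $w_1\cdot w_2 = \sum_k (w_1)_k(w_2)_k$ and $\langle\cdot,\cdot\rangle_{\C}$ is the Hermitian product. As recorded above, $\bar z$ and $\sqrt{-1}\,\bar z$ span the normal space of $V_2(\R^{n+2})$ in $S^{2n+3}(1)$; moreover $\bar z$ is horizontal for $\pi$ (since $\langle\sqrt{-1}z,\bar z\rangle_{\C}=\sqrt{-1}\,(z\cdot z)=0$) and of unit length, so $\xi_1 := (d\pi)_z(\bar z)$ is a unit normal to $Q^n$ in $\mathbb{C}P^{n+1}(4)$ and $\{\xi_1, J\xi_1\}$ is an orthonormal basis of the normal plane. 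Hence every unit normal is $\xi = \cos\phi\,\xi_1 + \sin\phi\,J\xi_1$. Using the horizontal lift, I identify $T_{[z]}Q^n$ with $H_z = \{\,w\in\C^{n+2} : z\cdot w = 0,\ \bar z\cdot w = 0\,\}$.

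The decisive algebraic observation is that complex conjugation $w\mapsto\bar w$ preserves $H_z$: if $z\cdot w=0$ and $\bar z\cdot w=0$, then $\bar z\cdot\bar w = \overline{z\cdot w}=0$ and $z\cdot\bar w=\overline{\bar z\cdot w}=0$. This reflects the fact that the defining polynomial of $Q^n$ has real coefficients --- the same fact that makes $\bar z$ horizontal and orthogonal to $z$ --- and it is ultimately the source of involutivity.

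Next I would compute the second fundamental form $h$ of $Q^n$ in $\mathbb{C}P^{n+1}(4)$. For horizontal lifts $\tilde X,\tilde Y$ of tangent fields and the flat connection $D$ of $\C^{n+2}$, O'Neill's submersion formula together with $\langle z,\bar z\rangle_{\R}=0$ reduces $\langle h(X,Y),\xi_1\rangle$ to $\langle D_{\tilde X}\tilde Y,\bar z\rangle_{\R}$; since $\langle\tilde Y,\bar z\rangle_{\R}=0$ and $D_{\tilde X}\bar z=\overline{\tilde X}$, one integration by parts gives $\langle A_{\xi_1}X,Y\rangle = -\langle\overline{\tilde X},\tilde Y\rangle_{\R}$. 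Thus $A_{\xi_1}$ is, up to the sign convention of the shape operator, complex conjugation on $H_z$. Because conjugation is a real-linear involution that is self-adjoint for $\langle\cdot,\cdot\rangle_{\R}$ and satisfies $\overline{\sqrt{-1}\,w}=-\sqrt{-1}\,\bar w$, this at once yields $A_{\xi_1}^2=\mathrm{Id}$, the symmetry of $A_{\xi_1}$, and $A_{\xi_1}J=-J A_{\xi_1}$; that is, all three properties for the distinguished normal $\xi_1$.

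Finally I would pass to a general unit normal. As $Q^n$ is a complex submanifold of the K\"ahler manifold $\mathbb{C}P^{n+1}(4)$, its tangent and normal bundles are $J$-invariant and $\lcc J=0$, which gives the standard identity $A_{J\xi_1}=J A_{\xi_1}$; hence $A_\xi=(\cos\phi\,\mathrm{Id}+\sin\phi\,J)A_{\xi_1}$. Pushing $A_{\xi_1}$ through the factor with the anticommutation rule $A_{\xi_1}J=-J A_{\xi_1}$ turns this into $A_\xi^2=(\cos^2\phi\,\mathrm{Id}-\sin^2\phi\,J^2)A_{\xi_1}^2=\mathrm{Id}$, while symmetry and anticommutation with $J$ are inherited by $A_\xi$ in the same manner. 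I expect the only genuinely delicate point to be the submersion bookkeeping in the shape-operator computation --- keeping the horizontal/vertical and tangent/normal splittings consistent and verifying the orthogonality relations such as $\langle z,\bar z\rangle_{\R}=0$ and $\langle\tilde Y,\bar z\rangle_{\R}=0$ --- whereas the conceptual core, the stability of $H_z$ under conjugation, is elementary.
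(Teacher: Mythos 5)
Your proof is correct. The paper itself does not prove Lemma \ref{lem1} but only cites \cite{R1} and \cite{Smyth1967}; your argument is fully consistent with the paper's framework, since identifying $A_{\xi_1}$ with minus complex conjugation on the horizontal space $H_z$ is exactly the content of formula \eqref{A0} in Example \ref{ex1}, from which the three properties are immediate, and the passage to a general unit normal via $A_{J\xi_1}=JA_{\xi_1}$ together with the anticommutation rule is standard and correctly executed.
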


Properties (i) and (ii) in Lemma \ref{lem1} are equivalent to saying that $\mathcal A$ is a family of almost product structures on $Q^n$. However, these almost product structures are not always integrable. In fact, we have the following result.

\begin{lemma} [\cite{Smyth1967}]\label{lem2}
	Let $\xi$ be a unit normal vector field along $Q^n$ in $\mathbb{C} P^{n+1}(4)$ with corresponding shape operator $A \in \mathcal A$. Then there exists a non-zero one-form $s$ such that
	\begin{align}
		& \nabla^{\mathbb{C} P^{n+1}(4)}_X \xi = - AX + s(X) J\xi, \label{def_s}\\
		& \nabla^{Q^n}_X A = s(X) JA \label{nabla_A}
	\end{align}
	for all $X$ tangent to $Q^n$, where $\nabla^{\mathbb{C} P^{n+1}(4)}$ and $\nabla^{Q^n}$ are the Levi Civita connections of $\mathbb{C} P^{n+1}(4)$ and $Q^n$ respectively.
\end{lemma}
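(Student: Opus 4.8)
The plan is to read off the first identity directly from the Weingarten formula, to reduce the second identity to the vanishing of one normal component of the derivative of the second fundamental form, and to handle the non-vanishing of $s$ by the Ricci equation. First I would exploit that $Q^n$ is a complex hypersurface of the K\"ahler manifold $\mathbb{C} P^{n+1}(4)$: its normal bundle has real rank two and is $J$-invariant, so $\{\xi,J\xi\}$ is a local orthonormal normal frame. I would define the normal connection one-form by $s(X):=\langle \nabla^\perp_X\xi,J\xi\rangle$, where $\nabla^\perp$ is the normal connection; since $\xi$ is unit we have $\langle\nabla^\perp_X\xi,\xi\rangle=0$, hence $\nabla^\perp_X\xi=s(X)J\xi$. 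The Weingarten formula $\nabla^{\mathbb{C} P^{n+1}(4)}_X\xi=-A_\xi X+\nabla^\perp_X\xi$, together with $A_\xi=A$, is then exactly \eqref{def_s}. Applying $J$ and using $\nabla^{\mathbb{C} P^{n+1}(4)}J=0$ shows in addition that $\nabla^\perp_X(J\xi)=-s(X)\xi$, so the entire normal connection is encoded by $s$.

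Next I would record the algebraic link between the two shape operators. Because $Q^n$ is a complex submanifold, its second fundamental form satisfies $h(X,JY)=Jh(X,Y)$; taking $\xi$-components and invoking Lemma~\ref{lem1}(iii) gives $A_{J\xi}=JA$, so that $h(Y,Z)=\langle AY,Z\rangle\xi+\langle JAY,Z\rangle J\xi$. Substituting these into $(\nabla^{Q^n}_X A)Y=\nabla^{Q^n}_X(AY)-A\nabla^{Q^n}_X Y$ and comparing $\xi$-components yields the key reduction
\[
\langle(\nabla^{Q^n}_X A)Y,Z\rangle=\langle(\bar\nabla_X h)(Y,Z),\xi\rangle+s(X)\langle JAY,Z\rangle,
\]
where $\bar\nabla$ is the van der Waerden--Bortolotti connection. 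Thus \eqref{nabla_A} is equivalent to the vanishing of the $\xi$-component of $\bar\nabla_X h$; a short algebraic computation using $A_{J\xi}=JA$ then forces the $J\xi$-component to vanish as well, i.e. the claim is in fact equivalent to $\bar\nabla h=0$.

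The hard part will be precisely this vanishing. The Codazzi equation reads $(\bar\nabla_X h)(Y,Z)-(\bar\nabla_Y h)(X,Z)=-(\tilde R(X,Y)Z)^\perp$, and since $\mathbb{C} P^{n+1}(4)$ has constant holomorphic sectional curvature while $Q^n$ is $J$-invariant, every term of the curvature formula applied to tangent $X,Y,Z$ stays tangent; hence the right-hand side vanishes and $\bar\nabla h$ is totally symmetric. I expect this symmetry (even combined with $A^2=\mathrm{Id}$ and minimality) to be insufficient on its own, and closing this gap is the main obstacle. I would resolve it using the explicit description of $Q^n$ from Section~2: realizing $\xi$ as the horizontal lift of $\bar z$ for $z\in V_2(\R^{n+2})$, the second fundamental form is governed by the constant Hessian of the defining quadratic $z\mapsto z^Tz$, and a direct computation of $\nabla^{\mathbb{C} P^{n+1}(4)}_X\bar z$ shows that all of its variation is absorbed into the rotation of the normal frame measured by $s$, leaving $\bar\nabla h=0$. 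An alternative route is to differentiate the Gauss equation and use that $Q^n$ is a symmetric space, so $\nabla^{Q^n}R^{Q^n}=0$, together with the invertibility of $A$ (from $A^2=\mathrm{Id}$), to force $\bar\nabla h=0$.

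Finally I would verify that $s$ is not identically zero by means of the Ricci equation. Using the curvature formula one computes $\tilde R(X,Y)\xi=2\langle X,JY\rangle J\xi$, while $A_{J\xi}=JA$ gives $\langle[A_\xi,A_{J\xi}]X,Y\rangle=2\langle X,JY\rangle$; since $R^\perp(X,Y)\xi=ds(X,Y)\,J\xi$, the Ricci equation yields $ds(X,Y)=4\langle X,JY\rangle$, a nonzero multiple of the K\"ahler form. In particular $ds\neq 0$, so $s$ cannot vanish identically, which completes the argument.
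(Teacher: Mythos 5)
The paper does not prove this lemma at all---it is imported verbatim from Smyth's work on complex hypersurfaces---so your argument has to stand on its own. Most of it does: the derivation of \eqref{def_s} from the Weingarten formula with $s(X)=g(\nabla^{\perp}_X\xi,J\xi)$, the identity
\[
g\bigl((\nabla^{Q^n}_XA)Y,Z\bigr)=g\bigl((\bar\nabla_Xh)(Y,Z),\xi\bigr)+s(X)\,g(JAY,Z),
\]
which reduces \eqref{nabla_A} to the vanishing of the $\xi$-component of $\bar\nabla h$, and the Ricci-equation computation giving $ds(X,Y)=4\,g(X,JY)\neq0$, hence $s\neq0$, are all correct. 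The genuine gap is the one you flag yourself: you never prove $\bar\nabla h=0$. The Hopf-fibration computation is asserted rather than carried out, and the alternative of differentiating the Gauss equation presupposes $\nabla^{Q^n}R^{Q^n}=0$ as an unproved input and does not obviously isolate $\nabla^{Q^n} A$ up to a multiple of $JA$. As written, the central identity \eqref{nabla_A} is therefore not established.

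In fact your own ingredients already suffice, contrary to your expectation that Codazzi symmetry together with $A^2=\mathrm{Id}$ would not be enough. Set $C(X,Y):=(\nabla^{Q^n}_XA)Y-s(X)JAY$ and $T(X,Y,Z):=g(C(X,Y),Z)$. Since $TQ^n$ is $J$-invariant, $\tilde R(X,Y)Z$ is tangent to $Q^n$ for tangent $X,Y,Z$, so the Codazzi equation of $Q^n\subset\mathbb{C}P^{n+1}(4)$ (equivalently, your reduction identity) makes $T$ symmetric in $X,Y$; it is symmetric in $Y,Z$ because $\nabla^{Q^n}_XA$ and $JA$ are symmetric operators; hence $T$ is totally symmetric. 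Differentiating $A^2=\mathrm{Id}$ gives $(\nabla^{Q^n}_XA)A+A(\nabla^{Q^n}_XA)=0$, while $(JA)A+A(JA)=J-J=0$, so for each $X$ the operator $C(X,\cdot)$ anti-commutes with $A$ and interchanges the orthogonal $(\pm1)$-eigenbundles $V_{\pm}$ of $A$ (both of rank $n$, since $J$ maps one onto the other by Lemma \ref{lem1}). Therefore $T(X,Y,Z)=0$ whenever $Y$ and $Z$ lie in the same eigenbundle, and by total symmetry whenever any two of the three arguments do; since $TQ^n=V_+\oplus V_-$, any three arguments have two in the same eigenbundle, so $T\equiv0$. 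Thus $C\equiv0$, which is \eqref{nabla_A}. Inserting this argument would close the gap and make your proof complete and entirely elementary.
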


The equation of Gauss for $Q^n$ as a submanifold of $\mathbb{C} P^{n+1}(4)$ yields the following expression for the Riemannian curvature tensor of $Q^n$:
\begin{align} \label{R_Qn}
	R^{Q^n}(X,Y)Z = \ & g(Y,Z)X - g(X,Z)Y + g(X,JZ)JY - g(Y,JZ)JX + 2g(X,JY)JZ \\
	& + g(AY,Z)AX - g(AX,Z)AY + g(JAY,Z)JAX - g(JAX,Z)JAY, \nonumber
\end{align}
where $A$ is any element of $\mathcal A$ and $X,~Y,~Z\in TQ^n$.  We can calculate straightforwardly from \eqref{R_Qn} that $Q^n$ is a K\"ahler-Einstein manifold with Einstein constant $2n$.

%%%%%%%%%%%%%%%%%%%%%%%%%%%%%%%%%%%%%%%%%%%%%%%%%%%%%%%%
\section{Lagrangian submanifolds of the complex hyperquadric}
%%%%%%%%%%%%%%%%%%%%%%%%%%%%%%%%%%%%%%%%%%%%%%%%%%%%%%%%

In the following sections, we consider an immersion $f: M^n \to Q^n$ of a manifold of real dimension $n$ into the complex hyperquadric of complex dimension $n$. If no confusion is possible, we will identify $M^n$ with its image and $(df)_p(T_pM^n)$ with $T_pM^n$ for every $p \in M^n$. Moreover, we will denote the metric on $M^n$ induced from the metric $g$ on $Q^n$, constructed above, again by $g$. As usual in complex geometry, we say that $f$ is \textit{Lagrangian} if $J$ maps the tangent space to $M^n$ at any point into the normal space to $M^n$ at that point and vice versa.

Fixing an almost product structure $A \in \mathcal A$ on $Q^n$, we can define at any point $p$ of a Lagrangian submanifold $M^n$ of $Q^n$ two endomorphisms $B$ and $C$ of $T_pM^n$ by putting
\begin{equation} \label{def_BC}
	AX = BX - JCX
\end{equation}
for all $X \in T_pM$, i.e., $BX$ is the component of $AX$ tangent to $M^n$ and $CX$ is the image under $J$ of the component of $AX$ normal to $M^n$. With these definitions, we have the following.

\begin{lemma} \label{lem3}
	$B$ and $C$ are symmetric endomorphisms of $T_pM^n$ which commute and satisfy $B^2+C^2=\mathrm{Id}$.
\end{lemma}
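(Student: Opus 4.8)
The plan is to verify the three claimed properties of $B$ and $C$ directly from the defining relation $AX = BX - JCX$, using the algebraic properties of $A$, $J$ and the Lagrangian condition collected above. First I would establish symmetry. Since $M^n$ is Lagrangian, $J$ interchanges $T_pM^n$ and its normal space, so for $X,Y \in T_pM^n$ the vector $JCY$ is normal while $BY$ is tangent; taking the $g$-inner product of $AX$ with $Y$ and using that $A$ is symmetric (Lemma \ref{lem1}(ii)) gives $g(BX,Y)=g(AX,Y)=g(X,AY)=g(X,BY)$, because the $JCY$ and $JCX$ terms are normal and pair to zero against tangent vectors. For $C$ I would pair $AX$ against $JY$: since $JY$ is normal, $g(AX,JY)=-g(JCX,JY)=-g(CX,Y)$ (as $J$ is an isometry), and the symmetry of $A$ together with the anti-commutation $AJ=-JA$ from Lemma \ref{lem1}(iii) yields $g(AX,JY)=g(X,AJY)=-g(X,JAY)=g(JX,AY)$, which after the same decomposition equals $-g(X,CY)$; comparing gives $g(CX,Y)=g(X,CY)$.

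Next I would prove the relation $B^2+C^2=\mathrm{Id}$ together with the commutativity $BC=CB$ by applying $A$ twice and using $A^2=\mathrm{Id}$ (Lemma \ref{lem1}(i)). Writing $A(AX)=X$ and substituting the decomposition, I compute $A(BX-JCX)=A(BX)-A(JCX)$. The key simplification is that $AJ=-JA$ lets me rewrite $A(JCX)=-J(ACX)$, after which I again split each of $A(BX)$ and $ACX$ into tangential and normal parts via \eqref{def_BC}. Collecting the tangential component of $X$ and the normal (i.e.\ $J$-image) component separately then yields two operator identities: the tangential part forces $B^2+C^2=\mathrm{Id}$, and matching the $J$-components forces the cross terms to cancel, giving $BC-CB=0$. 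Because $B$ and $C$ are each symmetric, their commuting is exactly what permits a simultaneous eigenbasis, which is what makes the later \emph{angle functions} well defined, so this step is the crux.

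The main obstacle I anticipate is purely bookkeeping rather than conceptual: one must consistently track which terms are tangent and which are normal after each application of $A$ and $J$, and correctly apply $AJ=-JA$ every time $J$ and $A$ are interchanged. The risk of a sign error is highest in the $B^2+C^2=\mathrm{Id}$ computation, where both a $J$ coming from the definition of $C$ and a $J$ coming from $AJ=-JA$ appear in the same term and must not be conflated. A clean way to organize this is to project the identity $A^2X=X$ onto $T_pM^n$ and onto $J(T_pM^n)$ separately, reading off the two scalar-free operator equations from the two projections; commutativity of $B$ and $C$ then emerges automatically from the normal projection rather than requiring a separate argument.
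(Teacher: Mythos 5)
Your proposal is correct and follows essentially the same route as the paper: symmetry of $B$ and $C$ is read off from the symmetry of $A$ and $JA$ by pairing against tangent vectors (the normal parts dropping out by the Lagrangian condition), and the identities $B^2+C^2=\mathrm{Id}$ and $BC=CB$ are obtained by expanding $A^2X=X$ via \eqref{def_BC} and separating the tangential and normal projections. The only cosmetic difference is that you unpack the symmetry of $JA$ explicitly through $AJ=-JA$, whereas the paper simply cites it.
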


\begin{proof}
	Since $g(BX,Y)=g(AX,Y)$ and $g(CX,Y)=g(JAX,Y)$ for all $X,Y \in T_pM^n$, the endomorphisms $B$ and $C$ are symmetric because $A$ and $JA$ are symmetric.
	
	Furthermore, we have $X = A^2X = A(BX-JCX) = (B^2+C^2)X + J(BC-CB)X$ for an arbitrary $X \in T_pM^n$. Since the first term on the right hand side is tangent to $M^n$ and the second term on the right hand side is normal to $M^n$, we must have $(B^2+C^2)X=X$ and $(BC-CB)X=0$, which proves the result.
\end{proof}

Lemma \ref{lem3} implies that $B$ and $C$ are simultaneously diagonalizable and that the sum of the squares of corresponding eigenvalues must be $1$. Therefore, there exist an orthonormal basis $\{e_1,\ldots,e_n\}$ of $T_pM^n$ and real numbers $\theta_1,\ldots,\theta_n$, defined up to an integer multiple of $\pi$, such that
\begin{equation} \label{BC}
	Be_j = \cos(2\theta_j) e_j, \qquad Ce_j = \sin(2\theta_j) e_j
\end{equation}
for $j=1,\ldots,n$. The factor $2$ in front of the angles is just a choice for convenience, as it will simplify some of the expressions in the sequel.  We can rewrite \eqref{BC} as $Ae_j = \cos(2\theta_j) e_j - \sin(2\theta_j) Je_j$.

Working locally, we can regard $B$ and $C$ as symmetric $(1,1)$-tensor fields on $M^n$ which define a local orthonormal frame $\{e_1,\ldots,e_n\}$ and local angle functions $\theta_1,\ldots,\theta_n$ in a similar way as above. In general, these functions cannot be extended to global functions on $M^n$ and they are only determined up to an integer  multiple of $\pi$.

The following result states that changing $A \in \mathcal A$ will change the angle functions $\theta_1,\ldots,\theta_n$, but not the orthonormal frame $\{e_1,\ldots,e_n\}$.
\begin{lemma}\label{lem4}
	Let $f: M^n \to Q^n$ be a Lagrangian immersion and $A_0,A \in \mathcal A$. Then there exists a function $\varphi: M^n \to \R$ such that, along the image of $f$,
	\begin{equation}
		A = \cos\varphi \, A_0 + \sin\varphi \, JA_0.
	\end{equation}
	If $\{e_1,\ldots,e_n\}$ is a local orthonormal frame such that $A_0e_j = \cos(2\theta^0_j) e_j - \sin(2\theta^0_j) Je_j$ for $j=1,\ldots,n$, then $Ae_j = \cos(2\theta_j) e_j - \sin(2\theta_j) Je_j$ for $j=1,\ldots,n$, with
	\begin{equation} \label{relation_theta_j}
		\theta_j = \theta^0_j - \frac{\varphi}{2}.
	\end{equation}
\end{lemma}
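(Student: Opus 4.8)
Lemma \ref{lem4} — let me understand what we need to prove.The plan is to first prove the decomposition $A = \cos\varphi\, A_0 + \sin\varphi\, JA_0$ as an identity of shape operators of $Q^n$ inside $\mathbb{C} P^{n+1}(4)$, and then obtain \eqref{relation_theta_j} by a purely algebraic substitution into the eigenvalue description \eqref{BC}. The geometric input for the first part is that $Q^n$ is a \emph{complex} hypersurface, so its normal space in $\mathbb{C} P^{n+1}(4)$ at each point is a $J$-invariant real $2$-plane; if $\xi_0$ denotes a unit normal field with shape operator $A_0$, then $\{\xi_0, J\xi_0\}$ is an orthonormal frame of this plane (orthonormality follows since $J$ is an isometry and $g(\xi_0,J\xi_0)=0$). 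Consequently, the unit normal $\xi$ whose shape operator is $A$ can be written as $\xi = \cos\varphi\,\xi_0 + \sin\varphi\, J\xi_0$ for some function $\varphi$, which is exactly the $\varphi$ claimed in the statement.

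Next I would compute $A = A_\xi$ as minus the $Q^n$-tangential component of $\nabla^{\mathbb{C} P^{n+1}(4)}_X \xi$. Differentiating $\xi = \cos\varphi\,\xi_0 + \sin\varphi\, J\xi_0$, the two terms carrying the factor $X\varphi$ are proportional to $\xi_0$ and $J\xi_0$, hence purely normal to $Q^n$, so they are annihilated by the tangential projection and play no role in the shape operator. For the remaining terms I would invoke the Kähler condition $\nabla J = 0$ on $\mathbb{C} P^{n+1}(4)$ together with \eqref{def_s}: this gives $\nabla^{\mathbb{C} P^{n+1}(4)}_X(J\xi_0) = J\nabla^{\mathbb{C} P^{n+1}(4)}_X \xi_0 = -JA_0 X - s(X)\xi_0$, whose $Q^n$-tangential part is $-JA_0 X$ (note $JA_0 X$ is tangent to $Q^n$ precisely because $Q^n$ is complex). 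Collecting the tangential parts yields $A_\xi = \cos\varphi\, A_0 + \sin\varphi\, JA_0$; phrased differently, the shape operator is linear over functions in the normal direction and the unit normal $J\xi_0$ has shape operator $A_{J\xi_0} = JA_0$.

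For the second part the computation is elementary. Applying $J$ to $A_0 e_j = \cos(2\theta^0_j) e_j - \sin(2\theta^0_j) Je_j$ and using $J^2=-\mathrm{Id}$ gives $JA_0 e_j = \sin(2\theta^0_j) e_j + \cos(2\theta^0_j) Je_j$. Substituting both into $A e_j = \cos\varphi\, A_0 e_j + \sin\varphi\, JA_0 e_j$ and regrouping with the angle-addition identities produces $A e_j = \cos(2\theta^0_j-\varphi)\, e_j - \sin(2\theta^0_j-\varphi)\, Je_j$. This is exactly of the form $\cos(2\theta_j) e_j - \sin(2\theta_j) Je_j$ with $2\theta_j = 2\theta^0_j - \varphi$, i.e. $\theta_j = \theta^0_j - \varphi/2$, and in particular the orthonormal frame $\{e_1,\ldots,e_n\}$ diagonalizing $A_0$ simultaneously diagonalizes $A$, as asserted.

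The steps are short, and I do not expect a genuine obstacle; the one point demanding care is keeping track of two nested orthogonal decompositions, namely tangent/normal for $M^n$ inside $Q^n$ and tangent/normal for $Q^n$ inside $\mathbb{C} P^{n+1}(4)$. The subtlety is that $Je_j$ is normal to $M^n$ yet tangent to $Q^n$, so it is invisible to the $Q^n$-normal projection used to extract $A_\xi$ but is exactly the direction into which the $C$-part of $A$ is recorded in \eqref{def_BC}. Being scrupulous about which projection is applied at each stage is what guarantees that the $X\varphi$-terms drop out of the shape operator and that the final regrouping lands cleanly in $\mathrm{span}\{e_j, Je_j\}$.
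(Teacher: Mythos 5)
Your proposal is correct and follows essentially the same route as the paper: write $\xi=\cos\varphi\,\xi_0+\sin\varphi\,J\xi_0$ using the $J$-invariance of the normal plane of the complex hypersurface $Q^n$, deduce $A=\cos\varphi\,A_0+\sin\varphi\,JA_0$, and then compute $Ae_j=\cos(2\theta_j^0-\varphi)e_j-\sin(2\theta_j^0-\varphi)Je_j$ by the angle-addition formulas. You merely spell out two steps the paper leaves implicit, namely the Weingarten-formula verification that $A_{J\xi_0}=JA_0$ (with the $X\varphi$-terms dropping out of the tangential projection) and the trigonometric regrouping; both are carried out correctly.
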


\begin{proof}
	Assume that $A_0$ and $A$ are the shape operators associated with unit normal vector fields $\xi_0$ and $\xi$ respectively. Since $Q^n$ is a K\"ahler submanifold of $\mathbb{C} P^{n+1}(4)$, there is a function $\varphi:M^n \to \R$ such that, at every point of $M^n$, $\xi = \cos\varphi \, \xi_0 + \sin\varphi \, J\xi_0$, which implies that $A = \cos\varphi \, A_0 + \sin\varphi \, JA_0$ along the image of $f$. Now assume that $A_0e_j = \cos(2\theta^0_j) e_j - \sin(2\theta^0_j) Je_j$ for $j=1,\ldots,n$. Then it follows from a straightforward computation that $Ae_j = \cos(2\theta^0_j-\varphi) e_j - \sin(2\theta^0_j-\varphi) Je_j$ for $j=1,\ldots,n$.
\end{proof}

There are a few possible choices for the almost product structure $A \in \mathcal A$ on $Q^n$ which are adapted to a given Lagrangian submanifold $f: M^n \to Q^n$. We present two of them in the next examples.

\begin{example} \label{ex1}
	Assume that, apart from a Lagrangian immersion $f: M^n \to Q^n$, also a horizontal lift $\hat f: M^n \to V_2(\R^{n+2})$ of $f$ is given. Remark that it follows from \cite{Reckziegel} that any Lagrangian immersion into $Q^n$ locally allows such a horizontal lift. If $M^n$ is simply connected, the horizontal lift can be defined globally. Since the normal space to $V_2(\R^{n+2})$ in $S^{2n+3}(1) \subset \mathbb{C}^{n+2}$ at a point $z$ is the complex span of $\bar z$, one can take
	$$\xi_{f(p)}=(d\pi)_{\hat f(p)}\left(\overline{\hat{f}(p)}\right)$$
	as a unit normal vector field to $Q^n$ along the image of $f$,  and the corresponding shape operator is given by
	\begin{equation} \label{A0}
		AX = -(d\pi)\left(\overline{\hat X}\right),
	\end{equation}
	where $X$ is any vector tangent to $Q^n$ at a point $f(p)$ and $\hat X$ is its horizontal lift to $\hat f(p)$.
\end{example}

\begin{example} \label{ex2}
	Given a Lagrangian immersion $f: M^n \to Q^n$, one can choose $A \in \mathcal A$ such that the associated local angle functions satisfy
	\begin{equation}\label{sumangleszero}
		\theta_1 + \cdots + \theta_n = 0 \mod \pi.
	\end{equation}
	Indeed, let $A_0 \in \mathcal A$ be an arbitrary almost product structure with associated local angle functions $\theta_1^0, \ldots , \theta_n^0$ and put $\varphi= 2(\theta^0_1 + \cdots + \theta^0_n)/n$. If we choose $A \in \mathcal A$ such that $A = \cos\varphi \, A_0 + \sin\varphi \, JA_0$ along the image of $f$, then it follows from \eqref{relation_theta_j} that the local angle functions associated with $A$ satisfy \eqref{sumangleszero}. Remark that we will always work modulo $\pi$ for local angle functions, since they are only defined up to an integer multiple of $\pi$.

\end{example}

\begin{remark}
	The choice of $\varphi$, and hence of $A \in \mathcal A$, in Example \ref{ex2} is not uniquely determined. Indeed, for any $k \in \{0,\ldots,n-1\}$, the function $\varphi= 2(\theta^0_1 + \cdots + \theta^0_n)/n + 2k\pi/n$ gives rise to a different $A \in \mathcal A$ for which the angle functions satisfy \eqref{sumangleszero}.
\end{remark}

Let  $h$ be the second fundamental form of the Lagrangian immersion $f: M^n \to Q^n$, we define
\begin{equation}
	h_{ij}^k = g(h(e_i,e_j),Je_k)
\end{equation}
for all $i,j,k=1,\ldots,n$, to be the components of $h$. A fundamental property of Lagrangian submanifolds of K\"ahler manifolds implies that the components $h_{ij}^k$ are totally symmetric in the three indices (cf. \cite{Oh1990,Oh1993}). Furthermore, let $\nabla$ denote the induced connection on $M^n$ from the Levi Civita connection $\nabla^{Q^n}$ of $(Q^n,g)$, we define its connection forms by
\begin{equation}
	\omega_j^k(X) = g(\nabla_X e_j,e_k)
\end{equation}
for all $j,k=1,\ldots,n$ and all $X$ tangent to $M^n$. Remark that this family of one-forms is anti-symmetric in the indices. The following proposition relates the angle functions, the components of the second fundamental form and the connection forms.

\begin{proposition} \label{prop1}
	Let $M^n$ be a Lagrangian submanifold of $Q^n$ and assume that an almost product structure $A \in \mathcal A$ on $Q^n$ is fixed. Let $\{e_1,\ldots,e_n\}$ be a local orthonormal frame on $M^n$ constructed as above, then the following relations among the angle functions, the components of the second fundamental form and the connection forms hold:
	\begin{align}
		& e_i(\theta_j) = h_{jj}^i - \frac{s(e_i)}{2}, \label{intcond1} \\
		& \sin(\theta_j-\theta_k) \omega_j^k(e_i) = \cos(\theta_j-\theta_k) h_{ij}^k, \label{intcond2}
	\end{align}
	for all $i,j,k=1,\ldots,n$, with $j \neq k$. Here, $s$ is the one-form associated with $A$ as in Lemma \ref{lem2}.
\end{proposition}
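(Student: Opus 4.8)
The plan is to differentiate the eigenvalue relation $Ae_j = \cos(2\theta_j)\,e_j - \sin(2\theta_j)\,Je_j$ covariantly in the direction $e_i$ and to compare the tangential and normal components of both sides. I work locally on an open set where the frame $\{e_1,\ldots,e_n\}$ and the angle functions $\theta_1,\ldots,\theta_n$ can be chosen smoothly.

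First I would apply $\nabla^{Q^n}_{e_i}$ to both sides. On the left, the Leibniz rule together with Lemma \ref{lem2} gives $\nabla^{Q^n}_{e_i}(Ae_j) = s(e_i)\,JAe_j + A\nabla^{Q^n}_{e_i}e_j$, and I would split $\nabla^{Q^n}_{e_i}e_j = \nabla_{e_i}e_j + h(e_i,e_j)$ via the Gauss formula, writing $\nabla_{e_i}e_j = \sum_k \omega_j^k(e_i)\,e_k$ and $h(e_i,e_j) = \sum_k h_{ij}^k\,Je_k$. On the right, since $J$ is parallel on the K\"ahler manifold $Q^n$, I would use $\nabla^{Q^n}_{e_i}(Je_j) = J\nabla^{Q^n}_{e_i}e_j$. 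To re-express the resulting vectors I would invoke the eigenvalue relation again for $Ae_k$ and, via Lemma \ref{lem1}(iii), the identity $A(Je_k) = -J(Ae_k) = -\sin(2\theta_k)\,e_k - \cos(2\theta_k)\,Je_k$. After these substitutions every term is a linear combination of the $e_l$ and $Je_l$, so I can separate tangential from normal parts.

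To extract \eqref{intcond1} I would equate the $e_j$-components of the tangential parts (equivalently the $Je_j$-components of the normal parts), using the antisymmetry $\omega_j^j=0$ and the total symmetry $h_{ij}^j = h_{jj}^i$; after dividing by $\sin(2\theta_j)$ (respectively $\cos(2\theta_j)$) the terms in $h_{jj}^i$ and $s(e_i)$ rearrange exactly to $e_i(\theta_j) = h_{jj}^i - s(e_i)/2$. To extract \eqref{intcond2} I would equate, for $k\neq j$, the $e_k$-components of the tangential parts (equivalently the $Je_k$-components of the normal parts); the tangential comparison produces
\begin{equation*}
	(\cos(2\theta_j)-\cos(2\theta_k))\,\omega_j^k(e_i) = -(\sin(2\theta_j)+\sin(2\theta_k))\,h_{ij}^k,
\end{equation*}
and applying the sum-to-product identities $\cos(2\theta_j)-\cos(2\theta_k) = -2\sin(\theta_j+\theta_k)\sin(\theta_j-\theta_k)$ and $\sin(2\theta_j)+\sin(2\theta_k) = 2\sin(\theta_j+\theta_k)\cos(\theta_j-\theta_k)$ reduces it, after cancelling the common factor $\sin(\theta_j+\theta_k)$, to the claimed relation.

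The delicate point is precisely these cancellations: the tangential comparison forces me to divide by $\sin(2\theta_j)$ and by $\sin(\theta_j+\theta_k)$, which may vanish at some points. I expect this to be the main obstacle, and I would resolve it by carrying out the normal-component comparison in parallel. The normal parts yield the very same identities \eqref{intcond1} and \eqref{intcond2}, but require instead dividing by $\cos(2\theta_j)$ and by $\cos(\theta_j+\theta_k)$ (using $\cos(2\theta_j)+\cos(2\theta_k) = 2\cos(\theta_j+\theta_k)\cos(\theta_j-\theta_k)$ and $\sin(2\theta_j)-\sin(2\theta_k) = 2\cos(\theta_j+\theta_k)\sin(\theta_j-\theta_k)$). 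Since $\sin(2\theta_j)$ and $\cos(2\theta_j)$ cannot vanish simultaneously, and likewise for $\sin(\theta_j+\theta_k)$ and $\cos(\theta_j+\theta_k)$, the two comparisons together establish \eqref{intcond1} and \eqref{intcond2} at every point.
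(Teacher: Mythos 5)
Your proposal is correct and is essentially the paper's own argument: differentiating $Ae_j=\cos(2\theta_j)e_j-\sin(2\theta_j)Je_j$ with the ambient connection and splitting into tangential and normal parts is exactly equivalent to the paper's computation of $(\nabla_X B)Y$ and $(\nabla_X C)Y$ from $\nabla^{Q^n}A=s\otimes JA$, and it yields the same four component equations. Your resolution of the division-by-zero issue (pairing the tangential and normal comparisons so that one divides by $\sin$ and the other by $\cos$ of the same angle) is also precisely the paper's.
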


\begin{proof}
	Combining the splitting $AX=BX-JCX$ with formula \eqref{nabla_A}, yields
	\begin{align}
		& (\nabla_X B)Y = s(X) CY + Jh(X,CY) + CJh(X,Y),\label{nablab} \\
		& (\nabla_X C)Y = -s(X) BY - Jh(X,BY) - BJh(X,Y)\label{nablac}
	\end{align}
	for all vector fields $X$ and $Y$ tangent to $M^n$. Evaluating these expressions for $X=e_i$ and $Y=e_j$ gives us two equalities between vectors.
	
	Comparing the components in $e_j$ gives respectively
	\begin{align*}
		& -2 \sin(2\theta_j)e_i(\theta_j) = s(e_i) \sin(2\theta_j) - 2\sin(2\theta_j)h_{ij}^j, \\
		& 2 \cos(2\theta_j)e_i(\theta_j) = -s(e_i) \cos(2\theta_j) + 2\cos(2\theta_j)h_{ij}^j.
	\end{align*}
	Since either $\sin(2\theta_j) \neq 0$ or $\cos(2\theta_j) \neq 0$, we conclude \eqref{intcond1}.
	
	On the other hand, comparing the components in $e_k$ for some $k \neq j$ gives respectively
	\begin{align*}
		& (\cos(2\theta_j)-\cos(2\theta_k)) \omega_j^k(e_i) = -(\sin(2\theta_j)+\sin(2\theta_k)) h_{ij}^k, \\
		& (\sin(2\theta_j)-\sin(2\theta_k)) \omega_j^k(e_i) = (\cos(2\theta_j)+\cos(2\theta_k)) h_{ij}^k,
	\end{align*}
	or, equivalently,
	\begin{align*}
		& -\sin(\theta_j+\theta_k)\sin(\theta_j-\theta_k) \omega_j^k(e_i) = -\sin(\theta_j+\theta_k)\cos(\theta_j-\theta_k) h_{ij}^k, \\
		& \cos(\theta_j+\theta_k)\sin(\theta_j-\theta_k) \omega_j^k(e_i) = \cos(\theta_j+\theta_k)\cos(\theta_j-\theta_k) h_{ij}^k.
	\end{align*}
	Since either $\sin(\theta_j+\theta_k) \neq 0$ or $\cos(\theta_j+\theta_k) \neq 0$, we conclude \eqref{intcond2}.
\end{proof}

\begin{corollary} \label{cor1}
	Let $f: M^n \to Q^n$ be a minimal Lagrangian immersion for which the sum of the local angle functions is constant. This can for example be achieved by choosing $A \in \mathcal A$ as in Example \ref{ex2}. Then the one-form $s$ associated with $A$ vanishes on tangent bundle of $M^n$. In particular, for all $X$ tangent to $M$, one has $\nabla^{Q^n}_X A=0$ and, if $A$ is the shape-operator associated with a normal vector field $\xi$ along $Q^n$, also $\nabla^{\perp}_X \xi = 0$, where $\nabla^{\perp}$ is the normal connection of $Q^n$ in $\mathbb{C} P^{n+1}(4)$.
\end{corollary}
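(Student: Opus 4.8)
The plan is to derive the vanishing of $s$ from the summed version of the integrability condition \eqref{intcond1}, and then read off the two geometric consequences directly from Lemma \ref{lem2}. First I would sum \eqref{intcond1} over $j=1,\ldots,n$, which gives
\begin{equation*}
	\sum_{j=1}^n e_i(\theta_j) = \sum_{j=1}^n h_{jj}^i - \frac{n}{2} s(e_i)
\end{equation*}
for each $i=1,\ldots,n$. The minimality hypothesis means the mean curvature vector vanishes, so $\sum_{j=1}^n h_{jj}^i = g(\operatorname{trace} h, Je_i) = 0$ for every $i$. The hypothesis that $\theta_1+\cdots+\theta_n$ is constant (which, as noted, is guaranteed by the choice of $A$ in Example \ref{ex2}, where the sum is $0 \bmod \pi$) kills the left-hand side, since $e_i\big(\sum_j \theta_j\big)=0$. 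Hence $\frac{n}{2} s(e_i) = 0$ for every $i$, and since $\{e_1,\ldots,e_n\}$ is a frame this forces $s(X)=0$ for all $X$ tangent to $M^n$.

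Once $s$ vanishes on $TM^n$, both remaining claims are immediate substitutions into Lemma \ref{lem2}. Plugging $s(X)=0$ into \eqref{nabla_A} gives $\nabla^{Q^n}_X A = s(X)\,JA = 0$ for all $X$ tangent to $M^n$. For the normal connection statement, I recall that $\nabla^\perp_X \xi$ is by definition the normal component of $\nabla^{\mathbb{C} P^{n+1}(4)}_X \xi$; since $A$ is the shape operator $-AX$ is the tangential part, equation \eqref{def_s} reads $\nabla^{\mathbb{C} P^{n+1}(4)}_X \xi = -AX + s(X) J\xi$, whose normal component is exactly $s(X)J\xi$. With $s(X)=0$ this yields $\nabla^\perp_X \xi = 0$.

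I do not expect any serious obstacle here; the corollary is essentially a bookkeeping consequence of Proposition \ref{prop1} together with Lemma \ref{lem2}. The one point requiring mild care is the interpretation of the trace condition: one must check that constancy of the \emph{sum} of the angle functions, rather than constancy of each individually, is what is being used, and that this constancy is compatible with the fact that the $\theta_j$ are only defined modulo $\pi$. Since the sum is constant modulo $\pi$ and $e_i$ is a derivation that annihilates locally constant functions, $e_i\big(\sum_j \theta_j\big)=0$ holds on any domain where the $\theta_j$ admit a smooth local choice, which suffices to run the argument pointwise. The only other thing to verify is that the decomposition $\nabla^{\mathbb{C} P^{n+1}(4)}_X \xi = -AX + s(X)J\xi$ is genuinely the tangent-plus-normal splitting of $Q^n$ in $\mathbb{C} P^{n+1}(4)$, so that extracting $\nabla^\perp_X\xi$ is legitimate; this is exactly the content of \eqref{def_s}.
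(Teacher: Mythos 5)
Your argument is correct and is essentially the paper's own proof: the paper likewise sums \eqref{intcond1} over $j$, uses constancy of $\theta_1+\cdots+\theta_n$ and the vanishing of the mean curvature to conclude $s(e_i)=0$, and then cites Lemma \ref{lem2} for the remaining claims (which you merely spell out in more detail via \eqref{nabla_A} and \eqref{def_s}). No gaps.
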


\begin{proof}
	Denote the angle functions by $\theta_1,\ldots,\theta_n$. Then
	\begin{equation} \label{phi_const}
		0 = e_i \left( \theta_1 + \cdots + \theta_n \right) = h_{11}^i + \cdots + h_{nn}^i - n \frac{s(e_i)}{2} = - n \frac{s(e_i)}{2}
	\end{equation}
	for any $i=1,\ldots,n$, where we used \eqref{intcond1} and the fact that $h_{11}^i + \cdots + h_{nn}^i$ is $n$ times the $Je_i$-component of the mean curvature vector. Hence $s(e_i)=0$ for $i=1,\ldots,n$ and thus $s$ vanishes on tangent bundle of $M^n$. The rest of the statement now follows directly from Lemma \ref{lem2}.
\end{proof}

To end this section, we state the equations of Gauss and Codazzi for a Lagrangian submanifold of $Q^n$.

\begin{proposition}[Equations of Gauss and Codazzi]
	Let $f: M^n \to Q^n$ be a Lagrangian immersion with second fundamental form $h$. Define $B$ and $C$ as above for any choice of $A \in \mathcal A$. Finally, denote by $R$ the Riemannian curvature tensor  of $M^n$ and by $\overline\nabla$ the Levi-Civita connection. Then
	\begin{equation} \label{gauss}
		\begin{aligned}
			g(R(X,Y)Z,W) = \ & g(Y,Z)g(X,W) - g(X,Z)g(Y,W) \\
			& + g(BY,Z) g(BX,W) - g(BX,Z) g(BY,W) \\
			& + g(CY,Z) g(CX,W) - g(CX,Z) g(CY,W) \\
			& + g(h(Y,Z),h(X,W)) - g(h(X,Z),h(Y,W))
		\end{aligned}
	\end{equation}
	and
	\begin{equation} \label{codazzi}
		\begin{aligned}
			(\overline\nabla h)(X,Y,Z) - (\overline\nabla h)(Y,X,Z) = \ & g(CY,Z) JBX - g(CX,Z) JBY \\
			& - g(BY,Z) JCX + g(BX,Z) JCY
		\end{aligned}
	\end{equation}
	for any vector fields $X$, $Y$, $Z$ and $W$ tangent to $M^n$.
\end{proposition}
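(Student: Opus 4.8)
The plan is to derive both identities by inserting the explicit curvature formula \eqref{R_Qn} of $Q^n$ into the general Gauss and Codazzi equations of submanifold theory and simplifying everything with the Lagrangian condition. Recall that for any isometric immersion the Gauss equation can be written as $g(R^{Q^n}(X,Y)Z,W) = g(R(X,Y)Z,W) - g(h(Y,Z),h(X,W)) + g(h(X,Z),h(Y,W))$ and the Codazzi equation as $(R^{Q^n}(X,Y)Z)^{\perp} = (\overline\nabla h)(X,Y,Z) - (\overline\nabla h)(Y,X,Z)$, where $(\cdot)^{\perp}$ denotes the component normal to $M^n$ and $\overline\nabla h$ is the usual covariant derivative of $h$ built from the Levi-Civita connection of $M^n$ and the normal connection. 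Thus the whole task reduces to rewriting the right-hand side of \eqref{R_Qn}, evaluated on vectors tangent to $M^n$, in terms of $B$, $C$ and $h$.

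The two ingredients I would use repeatedly are as follows. First, since $f$ is Lagrangian, $J$ interchanges $T_pM^n$ and the normal space, so $g(V,JW)=0$ whenever $V$ and $W$ are both tangent. Second, the splitting $AX=BX-JCX$ from \eqref{def_BC}, upon applying $J$ and using $J^2=-\mathrm{Id}$, yields $JAX = CX + JBX$. Consequently, for tangent $X,Z$ the scalars satisfy $g(AX,Z)=g(BX,Z)$ and $g(JAX,Z)=g(CX,Z)$, and the tangent/normal decompositions are $(AX)^{\top}=BX$, $(AX)^{\perp}=-JCX$, $(JAX)^{\top}=CX$, $(JAX)^{\perp}=JBX$. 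The symmetry of $B$ and $C$ (Lemma \ref{lem3}) guarantees that these coefficients are unambiguous.

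For the Gauss equation I would pair \eqref{R_Qn} with a tangent vector $W$. The first fact immediately annihilates the three middle terms $g(X,JZ)JY - g(Y,JZ)JX + 2g(X,JY)JZ$, since each carries a factor pairing a tangent vector against $J$ of a tangent vector. The second fact converts $g(AY,Z)g(AX,W) - g(AX,Z)g(AY,W)$ into the $B$-terms and $g(JAY,Z)g(JAX,W) - g(JAX,Z)g(JAY,W)$ into the $C$-terms appearing in \eqref{gauss}. Substituting the resulting expression for $g(R^{Q^n}(X,Y)Z,W)$ into the abstract Gauss equation and solving for $g(R(X,Y)Z,W)$ then produces \eqref{gauss} exactly.

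For the Codazzi equation I would extract the normal component of $R^{Q^n}(X,Y)Z$ for tangent $X,Y,Z$. The first two terms $g(Y,Z)X - g(X,Z)Y$ are tangent and drop out, and the three $J$-terms vanish by the Lagrangian property. In the four remaining terms the scalar coefficients simplify to $g(BY,Z)$, $g(BX,Z)$, $g(CY,Z)$, $g(CX,Z)$, while taking normal parts via $(AX)^{\perp}=-JCX$ and $(JAX)^{\perp}=JBX$ gives precisely $g(CY,Z)JBX - g(CX,Z)JBY - g(BY,Z)JCX + g(BX,Z)JCY$, which is the right-hand side of \eqref{codazzi}. This is fundamentally a substitution-and-bookkeeping argument with no deep obstacle; the part requiring the most care will be tracking which summands are tangent and which are normal and pinning down all the signs, together with the elementary but easily mis-stated identity $JAX=CX+JBX$ on which the appearance of the $B$- and $C$-terms entirely depends.
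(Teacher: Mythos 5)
Your proposal is correct and follows essentially the same route as the paper: both substitute the curvature expression \eqref{R_Qn} into the general Gauss and Codazzi equations, use the Lagrangian condition to eliminate the $J$-terms, and use the splitting \eqref{def_BC} to identify the tangent and normal parts of $AX$ and $JAX$. Your sign bookkeeping (in particular $(AX)^{\perp}=-JCX$ and $(JAX)^{\perp}=JBX$) reproduces the stated right-hand sides exactly.
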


\begin{proof}
	These follow immediately from the general forms of the equations of Gauss and Codazzi,
	\begin{align*}
		& g(R(X,Y)Z,W) = g(R^{Q^n}(X,Y)Z,W) + g(h(Y,Z),h(X,W)) - g(h(X,Z),h(Y,W)), \\
		& (\overline\nabla h)(X,Y,Z) - (\overline\nabla h)(Y,X,Z) = (R^{Q^n}(X,Y)Z)^{\perp},
	\end{align*}
	where the superscript $\perp$ denotes the component normal to $M^n$, by using \eqref{R_Qn} and \eqref{def_BC}.
\end{proof}

\begin{remark}
	If $\{e_1,\ldots,e_n\}$ is the local orthonormal frame constructed above and $\theta_1,\ldots,\theta_n$ are the local angle functions, then it follows from \eqref{gauss} and \eqref{intcond1} that the sectional curvature of the plane spanned by $e_i$ and $e_j$ is given by
	\begin{equation} \label{seccurv}
		\begin{aligned}
			K_{ij} &= g(R(e_i,e_j)e_j,e_i) \\
			&= 2 \cos^2(\theta_i-\theta_j) + g(h(e_i,e_i),h(e_j,e_j)) - g(h(e_i,e_j),h(e_i,e_j)) \\
			&= 2 \cos^2(\theta_i-\theta_j) + \sum_{k=1}^n \left( h_{ii}^k h_{jj}^k - (h_{ij}^k)^2 \right) \\
			&= 2 \cos^2(\theta_i-\theta_j) + \sum_{k=1}^n \left( \left( e_k(\theta_i)+\frac{s(e_k)}{2} \right) \left(e_k(\theta_j)+\frac{s(e_k)}{2}\right)-(h_{ij}^k)^2 \right)
		\end{aligned}
	\end{equation}
	for any $i,j = 1,\ldots,n$, with $i\neq j$.
\end{remark}

%%%%%%%%%%%%%%%%%%%%%%%%%%%%%%%%%%%%%%%%%%%%%%%%%%%%%%%%%%%%%%%%%%%%%%%%%%%%
\section{Lagrangian submanifolds of $Q^n$ and hypersurfaces of $S^{n+1}(1)$}
%%%%%%%%%%%%%%%%%%%%%%%%%%%%%%%%%%%%%%%%%%%%%%%%%%%%%%%%%%%%%%%%%%%%%%%%%%%%

Let $a: M^n \to S^{n+1}(1) \subset \R^{n+2}$ be an immersion and denote by $b$ a unit normal vector field along this immersion, tangent to $S^{n+1}(1)$. Let $\lambda_1,\ldots,\lambda_n$ be the principal curvatures and denote by $\{e_1,\ldots,e_n\}$ a local orthonormal frame given by principal directions such that $Se_j = \lambda_j e_j$ for $j=1,\ldots,n$, where $S$ is the shape operator associated with $b$. The Gauss map of the hypersurface $a$ is given by
\begin{equation} \label{def_G}
	G: M^n \to Q^n: p \mapsto [a(p)+\sqrt{-1} b(p)].
\end{equation}
Remark that
\begin{equation} \label{def_Ghat}
	\hat G: M^n \to V_2(\R^{n+2}): p \mapsto \frac{1}{\sqrt 2}(a(p)+\sqrt{-1}b(p))
\end{equation}
is a map into the Stiefel manifold $V_2(\R^{n+2})$ such that $G = \pi \circ \hat G$, which shows that $G$ indeed takes values in $Q^n$. In fact, $\hat G$ is horizontal since
\begin{equation} \label{dGhat}
	(d\hat G)e_j = \frac{1}{\sqrt 2}(1-\sqrt{-1}\lambda_j) e_j
\end{equation}
is perpendicular to $\sqrt{-1}\hat G$ for all $j=1,\ldots,n$. It also follows from \eqref{dGhat} that $ (d\hat G)e_j $ is perpendicular to $\sqrt{-1}(d\hat G)e_k $ for all $j,k=1,\ldots,n$, which implies that $G$ is Lagrangian. The map $\hat G$ is, up to multiplication with a factor $e^{\sqrt{-1}t}$ for some constant $t\in \mathbb R$, the unique horizontal lift of $G$.

If the hypersurface $a$ is \textit{isoparametric} in $S^{n+1}(1)$, i.e., if the principal curvatures $\lambda_1,\ldots,\lambda_n$ are constant, then the Gauss map is a minimal Lagrangian immersion. This follows either by a straightforward computation of the second order derivatives of $\hat G$ or from the following elegant formula from \cite{Palmer}:
\begin{equation}
	g(J\vec{H},\cdot) = -\frac{1}{n} \ d \left( \mathrm{Im} \left( \log \prod_{j=1}^n (1+\sqrt{-1}\lambda_j) \right) \right),
\end{equation}
where $\vec{H}$ is the mean curvature vector of the Gauss map.

As remarked in \cite{MaOhnita1},  any Lagrangian immersion $f:M^n \to Q^n$ can locally be seen as the Gauss map of a hypersurface of $S^{n+1}(1)$. Indeed, inspired by \eqref{def_Ghat}, we can always take a horizontal lift $\hat f: M^n \to V_2(\R^{n+2})$ such that ($\sqrt 2$ times) its real part is locally an immersion into $S^{n+1}(1)$. In the following, we prove Theorem \ref{theo1}.

\medskip
\noindent \textbf{Proof of Theorem~\ref{theo1}:}
As discussed above, it is well-known that the Gauss map $G: M^n \to Q^n: p \mapsto [a(p)+\sqrt{-1}b(p)]$ of an isoparametric hypersurface $a: M^n \to S^{n+1}(1)$ with unit normal $b$ is a minimal Lagrangian immersion. Let $A_0 \in \mathcal A$ be as in Example \ref{ex1} for the lift $\hat G$ given in \eqref{def_Ghat} and let $A \in \mathcal A$ be arbitrary. Lemma \ref{lem4} implies that, along the image of $G$, we have $A = \cos\varphi \, A_{0} + \sin\varphi \, JA_{0}$ for some function $\varphi: M^n \to \R$. Let $\{e_1,\ldots,e_n\}$ be a local orthonormal frame of principal directions for the immersion $a$ on $M^n$. Then, using \eqref{A0} and \eqref{dGhat},
\begin{align*}
	A_0 &(dG)e_j = -(d\pi) \left( \overline{(d \hat G)e_j} \right) = -(d\pi) \left( \frac{1}{\sqrt 2}(1+\sqrt{-1}\lambda_j)e_j \right) \\
	& = -(d\pi) \left( \frac{1-\lambda_j^2}{1+\lambda_j^2} \, (d \hat G)e_j + \frac{2\lambda_j}{1+\lambda_j^2} \, \sqrt{-1} (d \hat G)e_j \right) = \frac{\lambda_j^2-1}{\lambda_j^2+1} \, (dG)e_j - \frac{2\lambda_j}{\lambda_j^2+1} \, J (dG)e_j.
\end{align*}
This implies that the frame $\{e_1, \ldots, e_n \}$ diagonalizes the operators $B_0$ and $C_0$, associated with $A_0$ as explained above, and that the angle functions are determined by
\begin{equation} \label{theta0}
	\cos(2\theta^0_j) = \frac{\lambda_j^2-1}{\lambda_j^2+1}, \qquad
	\sin(2\theta^0_j) = \frac{2\lambda_j}{\lambda_j^2+1}.
\end{equation}
\eqref{theta0} implies that $\lambda_j=\cot\theta^0_j$. In particular, we obtain that $\theta^0_1,\ldots,\theta^0_n$ are constant. It follows from Lemma \ref{lem4} that the angle functions associated with $A$ are given by $\theta_j = \theta_j^0 - \varphi/2$, which implies that the difference between any two of them is constant. Now assume that $A$ is chosen as in Example \ref{ex2}. Since $\theta_1 + \cdots + \theta_n = 0 \mod \pi$, we obtain that $\varphi$ is also constant and $\lambda_j = \cot(\theta_j^0) = \cot(\theta_j + \varphi/2)$ for $j=1,\ldots,n$.

Conversely, let $f: M^n \to Q^n$ be a minimal Lagrangian immersion with constant angle functions associated with some $A \in \mathcal A$. Since $M^n$ is simply connected, we can take a horizontal lift $\hat f: M^n \to V_2(\R^{n+2})$ of $f$, which can be written as $\hat f = (a+\sqrt{-1}b)/\sqrt 2$ and hence defines two maps $a,b: M^n \to S^{n+1}(1)$ such that $a(p)$ and $b(p)$ are orthogonal for every $p \in M^n$. For every constant $t \in \R$, there is another horizontal lift of $f$, namely,
\begin{equation} \label{liftf}
	\hat f_t = \frac{e^{\sqrt{-1}t}}{\sqrt 2}(a+\sqrt{-1}b) = \frac{1}{\sqrt 2}(a_t+\sqrt{-1}b_t),
\end{equation}
where $a_t= (\cos t) a - (\sin t) b$ and $b_t= (\sin t) a + (\cos t) b$. Now let $\xi$ be a unit normal vector field to $Q^n$ such that $A$ is the shape operator associated with $\xi$. We can lift the restriction of $\xi$ to the image of $f$ to a horizontal vector field along the image of $\hat f$, which can be written as $\hat\xi = e^{\sqrt{-1}\varphi}(a-\sqrt{-1}b)/\sqrt 2$ for some function $\varphi: M^n \to \R$. We know from Corollary \ref{cor1} that, for every vector $X$ tangent to $M^n$, one has $\nabla^{\perp}_X \xi = 0$, where $\nabla^{\perp}$ is the normal connection of $Q^n$ in $\mathbb{C} P^{n+1}(4)$. This implies that also $\nabla^{\perp}_X \hat\xi = 0$, where $\nabla^{\perp}$ is now the normal connection of $V_2(\R^{n+2})$ in $S^{2n+3}(1)$. Combining this with the expression for $\hat\xi$ yields that $\varphi$ is constant. If we lift $\xi$ to the immersion $\hat f_t$ rather than to $\hat f$, we obtain
\begin{equation} \label{liftxi}
	\hat{\xi}_t = e^{\sqrt{-1}t}\hat{\xi} = \frac{e^{\sqrt{-1}(\varphi+t)}}{\sqrt 2}(a-\sqrt{-1}b) = \frac{e^{\sqrt{-1}(\varphi+2t)}}{\sqrt 2}(a_t-\sqrt{-1}b_t).
\end{equation}
It follows from \eqref{liftf} and \eqref{liftxi} that
\begin{equation} \label{eq_ab}
	a_t = \frac{1}{\sqrt 2} (\hat f_t + e^{-\sqrt{-1}(\varphi+2t)} \hat{\xi}_t), \qquad b_t = -\frac{\sqrt{-1}}{\sqrt 2}(\hat f_t - e^{-\sqrt{-1}(\varphi+2t)} \hat{\xi}_t).
\end{equation}
Let us investigate when $a_t$ is an immersion. Let $\{e_1,\ldots,e_n\}$ be an orthonormal frame such that $A(df)e_j = \cos(2\theta_j) (df)e_j - \sin(2\theta_j) J(df)e_j$. Since $(d\pi)(d\hat{\xi})e_j=-A(df)e_j$, we obtain
\begin{equation} \label{eq_da}
	\begin{aligned}
		(da_t)e_j &= \frac{1}{\sqrt 2} \left( (d\hat f_t)e_j - e^{-\sqrt{-1}(\varphi+2t)} \left( \cos(2\theta_j) (d\hat f_t)e_j - \sqrt{-1}\sin(2\theta_j) (d \hat f_t)e_j \right) \right) \\
		&= \frac{1}{\sqrt 2} \left( 1-e^{-\sqrt{-1}(2\theta_j+\varphi+2t)} \right)(d\hat f_t)e_j,
	\end{aligned}
\end{equation}
which means that $a_t$ is an immersion if and only if $2\theta_j + \varphi + 2t$ is not a multiple of $2\pi$ for any $j=1,\ldots,n$. Consequently, the equation for $a_t$ in \eqref{eq_ab} defines an immersion into $S^{n+1}(1)$ for every choice of constant $t$, and hence of $c=\varphi/2+t$, satisfying $\sin(\theta_j + \varphi/2 + t) \neq 0$ for any $j=1,\ldots,n$. Moreover, $b_t$, as defined by \eqref{eq_ab}, must be a unit normal to this hypersurface tangent to the sphere, since it is perpendicular to $a_t$ and also to $(da_t)e_j$ for all $j=1,\ldots,n$, as can be seen from \eqref{eq_da}. This means that $a_t$ is a hypersurface with Gauss map $f$. By comparing \eqref{dGhat} and \eqref{eq_da}, we see that the principal curvatures of this hypersurface are given by $\lambda_j = \cot(\theta_j + \varphi/2 + t)$. In particular, they are constant and hence $a_t$ is isoparametric.
\qed

\begin{remark}
	It follows from Theorem \ref{theo1} that for a given Lagrangian immersion $f: M^n \to Q^n$, which is minimal and, after choosing $A$ as in Example \ref{ex2}, has constant angle functions, there are several isoparametric hypersurfaces of $S^{n+1}(1)$ with Gauss map $f$. It follows from the proof that if $a$ is such a hypersurface, with unit normal $b$, then $a_t = (\cos t) a + (\sin t) b$ will define a hypersurface with the same Gauss map for any $t \in \mathbb R$, provided that $a_t$ is an immersion. Indeed, if $a$ is $\sqrt 2$ times the real part of a horizontal lift $\hat f$, then $a_t$ is $\sqrt 2$ times the real part of the horizontal lift $\hat f_t = e^{\sqrt{-1}t} \hat f$. Remark that $a_t$ is a so-called \textit{parallel hypersurface} of $a$ in $S^{n+1}(1)$.
\end{remark}

\begin{remark} The formula $\lambda_j = \cot\theta_j$ appeared before in the theory of isoparametric hypersurfaces, for example in M\"unzner's paper \cite{MunznerI}. Theorem \ref{theo1} gives an interpretation for the angles $\theta_j$.
\end{remark}

We can now translate everything that is known about the classification of isoparametric hypersurfaces of spheres to the theory of minimal Lagrangian submanifolds of $Q^n$. For example, we have the following.

\begin{corollary} \label{cor2}
	Let $f: M^n \to Q^n$ be a minimal Lagrangian immersion with constant angle functions. If $g$ is the number of different constant angle functions modulo $\pi$, then $g \in \{1,2,3,4,6\}$. Moreover,
	\begin{itemize}
		\item if $g=1$, then $f$ is the Gauss map of a part of the standard embedding $S^n(r) \to S^{n+1}(1)$;
		\item if $g=2$, then $f$ is the Gauss map of a part of the standard embedding  $S^k(r_1) \times S^{n-k}(r_2) \to S^{n+1}(1)$;
		\item if $g=3$, then $f$ is the Gauss map of a part of a tube around the standard embedding $\mathbb R P^2 \to S^4(1)$, $\mathbb C P^2 \to S^7(1)$, $\mathbb H P^2 \to S^{13}(1)$ or $\mathbb O P^2 \to S^{25}(1)$ (in the first case, the standard embedding is a Veronese embedding), which are known as Cartan's isoparametric hypersurfaces.
	\end{itemize}
\end{corollary}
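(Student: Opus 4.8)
The plan is to reduce the statement to the classical classification of isoparametric hypersurfaces of the sphere through the correspondence established in Theorem~\ref{theo1}. First I would invoke the converse direction of that theorem. Working locally, or passing to the universal cover so that $M^n$ may be assumed simply connected, the angle functions $\theta_1,\ldots,\theta_n$ take only finitely many constant values modulo $\pi$, so there certainly exists a real constant $c$ with $\sin(\theta_j+c)\neq 0$ for all $j=1,\ldots,n$. Theorem~\ref{theo1} then produces an isoparametric immersion $a:M^n\to S^{n+1}(1)$ whose Gauss map is $f$ and whose constant principal curvatures are $\lambda_j=\cot(\theta_j+c)$.

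The key observation is the dictionary between distinct angle functions and distinct principal curvatures. Since $\cot$ is injective modulo $\pi$, one has $\lambda_j=\lambda_k$ if and only if $\cot(\theta_j+c)=\cot(\theta_k+c)$, that is, if and only if $\theta_j\equiv\theta_k \pmod\pi$. Hence the number $g$ of distinct constant angle functions modulo $\pi$ equals precisely the number of distinct principal curvatures of the isoparametric hypersurface $a$.

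Next I would apply M\"unzner's theorem, which asserts that the number of distinct principal curvatures of an isoparametric hypersurface of a sphere belongs to $\{1,2,3,4,6\}$; combined with the dictionary this yields the first assertion $g\in\{1,2,3,4,6\}$. For the explicit classification I would then cite the known results case by case: if $g=1$ the hypersurface is totally umbilic, hence an open part of a geodesic hypersphere $S^n(r)\to S^{n+1}(1)$; if $g=2$ the isoparametric hypersurfaces are exactly the generalized Clifford tori $S^k(r_1)\times S^{n-k}(r_2)$; and if $g=3$ Cartan's classification shows that $a$ is an open part of a tube around one of the standard Veronese-type embeddings $\mathbb R P^2\to S^4(1)$, $\mathbb C P^2\to S^7(1)$, $\mathbb H P^2\to S^{13}(1)$ or $\mathbb O P^2\to S^{25}(1)$. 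In each case the Gauss map of $a$ is $f$, which gives the stated conclusions, the phrase ``a part of'' reflecting the local nature of the correspondence in Theorem~\ref{theo1}.

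I expect that no genuine obstacle arises in the argument itself: all the difficulty is absorbed into the deep classification theorems of M\"unzner and Cartan being cited, so the role of the corollary is essentially to translate those results through Theorem~\ref{theo1}. The only point requiring a little care is the independence of $g$ from the choice of the admissible constant $c$, which is immediate from the dictionary above, since replacing $c$ shifts every $\theta_j+c$ by the same amount and therefore preserves all coincidences modulo $\pi$; the cases $g=4$ and $g=6$ need no further discussion, as the ``moreover'' clause only treats $g\leq 3$.
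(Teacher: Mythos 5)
Your proposal is correct and follows essentially the same route as the paper: reduce to an isoparametric hypersurface via the converse direction of Theorem~\ref{theo1}, identify the number of distinct angle functions modulo $\pi$ with the number of distinct principal curvatures, and then invoke M\"unzner's theorem and Cartan's classification for $g\le 3$. The extra details you supply (existence of an admissible constant $c$, injectivity of $\cot$ modulo $\pi$, passing to the universal cover) are all implicit in the paper's terser argument.
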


\begin{proof}
	The first part of the statement follows from M\"unzner's theorem \cite{MunznerI, MunznerII} on the possible numbers of distinct principal curvatures of an isoparametric hypersurface of $S^{n+1}(1)$. Indeed, we know from Theorem \ref{theo1} that $f$ is the Gauss map of an isoparametric hypersurface of $S^{n+1}(1)$ and that the number of different constant angle functions modulo $\pi$ equals the number of distinct principal curvatures of this hypersurface. The second part follows from the classification of isoparametric hypersurfaces of spheres for $g=1$, $g=2$ and $g=3$, known since the work of Cartan \cite{C}.
\end{proof}

\begin{remark}
	The corollary above is only a partial result in the sense that we can translate everything which is known about the classification of isoparametric hypersurfaces of spheres to minimal Lagrangian submanifolds of $Q^n$, also for the cases $g=4$ and $g=6$.
\end{remark}

The following theorem states that the first two  examples in Corollary \ref{cor2} are the only totally geodesic Lagrangian submanifolds of $Q^n$.

\begin{theorem} \label{theoTG}
	Let $f: M^n \to Q^n$ be a totally geodesic Lagrangian immersion. Then $f$ is the Gauss map of a part of the standard embedding $S^n(r) \to S^{n+1}(1)$ or $S^k(r_1) \times S^{n-k}(r_2) \to S^{n+1}(1)$. In the former case, the metric induced by $f$ gives $M^n$ constant sectional curvature $2$. In the latter case, the metric induced by $f$ does not give $M^n$ constant sectional curvature, unless $k=1$ and $n=2$, in which case $f$ makes $M^2$ flat.
\end{theorem}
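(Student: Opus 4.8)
The plan is to exploit that a totally geodesic immersion is in particular minimal, so the whole machinery of Sections~3 and~4 applies, and then to read off strong pointwise constraints on the angle functions directly from the vanishing of $h$ in the equations of Proposition~\ref{prop1} and in the Codazzi equation~\eqref{codazzi}.

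First I would fix $A \in \mathcal A$ as in Example~\ref{ex2}, so that the sum of the angle functions is constant. Since $h \equiv 0$ the immersion is minimal, and Corollary~\ref{cor1} then gives $s \equiv 0$ on $TM^n$. Feeding $h=0$ and $s=0$ into \eqref{intcond1} yields $e_i(\theta_j)=0$ for all $i,j$, so that every angle function $\theta_1,\ldots,\theta_n$ is (locally) constant. At this point Theorem~\ref{theo1} already tells us that $f$ is the Gauss map of an isoparametric hypersurface of $S^{n+1}(1)$, and Corollary~\ref{cor2} says the number $g$ of distinct angles modulo $\pi$ lies in $\{1,2,3,4,6\}$; the remaining work is to rule out $g\geq 3$ and to compute the curvature.

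The key step is the Codazzi equation~\eqref{codazzi}. Evaluating it with $h=0$ at $X=e_i$, $Y=e_j$, $Z=e_j$ for $i \neq j$, and using $Be_l=\cos(2\theta_l)e_l$, $Ce_l=\sin(2\theta_l)e_l$, collapses the right-hand side to $\sin\!\big(2(\theta_j-\theta_i)\big)\,Je_i$, forcing $\sin\!\big(2(\theta_i-\theta_j)\big)=0$ for every pair $i\neq j$. Hence any two angle functions differ by a multiple of $\pi/2$, so modulo $\pi$ there are at most two distinct values, differing by $\pi/2$; that is, $g \in \{1,2\}$. Invoking Corollary~\ref{cor2} then identifies $f$ as the Gauss map of a part of $S^n(r)\to S^{n+1}(1)$ when $g=1$ and of $S^k(r_1)\times S^{n-k}(r_2)\to S^{n+1}(1)$ when $g=2$, as claimed.

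For the curvature statements I would substitute $h=0$ and $s=0$ into the sectional curvature formula \eqref{seccurv}, obtaining $K_{ij}=2\cos^2(\theta_i-\theta_j)$. When $g=1$ every difference $\theta_i-\theta_j$ is $0 \bmod \pi$, so $K_{ij}\equiv 2$ and $M^n$ has constant sectional curvature $2$. When $g=2$, writing the block with value $\theta$ of multiplicity $k$ and the block with value $\theta+\pi/2$ of multiplicity $n-k$, a plane inside a single block has $K=2$ while a mixed plane has $K=0$; the sectional curvature is therefore constant only if no single-block plane exists, which forces $k=1$ and $n-k=1$, i.e.\ $n=2$, and then the unique sectional curvature equals $0$, so $M^2$ is flat. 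I expect the only genuinely delicate point to be the index bookkeeping in the Codazzi computation that produces $\sin\!\big(2(\theta_i-\theta_j)\big)$; everything else is a direct substitution into formulas already established.
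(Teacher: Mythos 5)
Your proposal is correct and follows essentially the same route as the paper's proof: choosing $A$ as in Example~\ref{ex2}, deducing constant angles from Corollary~\ref{cor1} and \eqref{intcond1}, extracting $\sin(2(\theta_i-\theta_j))=0$ from the Codazzi equation with $h=0$, and reading the curvatures off \eqref{seccurv}. The only cosmetic difference is that the paper evaluates Codazzi at a general triple $(e_i,e_j,e_k)$ while you specialize to $Z=e_j$, which yields the same conclusion.
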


\begin{proof}
	Choose $A \in \mathcal A$ as in Example \ref{ex2}. Then it follows from Corollary \ref{cor1} that the one-form $s$ vanishes on tangent bundle of $M^n$ and hence from \eqref{intcond1} that all the angle functions are constant. We then know from Theorem \ref{theo1} that $f$ is the Gauss map of an isoparametric hypersurface of $S^{n+1}(1)$ and from Corollary \ref{cor2} that the number of different constant angle functions modulo $\pi$ is $1$, $2$, $3$, $4$ or $6$.
	
	The equation of Codazzi \eqref{codazzi} for $X=e_i$, $Y=e_j$ en $Z=e_k$ yields $$\sin(2(\theta_i-\theta_j))(\delta_{jk}Je_i + \delta_{ik}Je_j) = 0,$$
	which implies that $2\theta_i = 2\theta_j \mod \pi$ for all indices $i$ and $j$. This means that there can be at most two different angle functions modulo $\pi$ and we obtain the result from Corollary \ref{cor2}.
	
	The claims about the sectional curvature follow from \eqref{seccurv}.
\end{proof}

The following theorem states that the first example in Corollary \ref{cor2} describes the only family of minimal Lagrangian submanifolds of $Q^n$ for which all angle functions are equal.

\begin{theorem} \label{theoSameAngles}
	Let $f: M^n \to Q^n$ be a minimal Lagrangian immersion such that all angle functions are equal. Then $f$ is the Gauss map of a part of the standard embedding $S^n(r) \to S^{n+1}(1)$.
\end{theorem}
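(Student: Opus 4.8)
The plan is to show that the hypothesis forces $f$ to be totally geodesic, and then to invoke Theorem~\ref{theoTG} together with the constant-angle information to single out the sphere. First I would observe that the property of all angle functions being equal does not depend on the chosen $A \in \mathcal A$: by Lemma~\ref{lem4}, changing the almost product structure shifts every $\theta_j$ by the same amount $-\varphi/2$, so all equalities among the $\theta_j$ are preserved. Fixing any such $A$, the hypothesis gives $\theta_j - \theta_k \equiv 0 \pmod \pi$ for all $j,k$, whence $\sin(\theta_j-\theta_k)=0$ and $\cos(\theta_j-\theta_k)=\pm 1$.

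The key step is to read off the second fundamental form from \eqref{intcond2}. For $j \neq k$ that relation becomes $0 = \pm\, h_{ij}^k$, so $h_{ij}^k = 0$ whenever the second lower index differs from the upper index. Because the components $h_{ij}^k$ are totally symmetric in $i,j,k$, this forces every component with not all three indices equal to vanish: any such component can be rearranged so that its second lower index and its upper index become distinct, and then \eqref{intcond2} applies. Thus the only components that could survive are the diagonal ones $h_{ii}^i$. Now minimality enters: the vanishing of the mean curvature vector gives $\sum_i h_{ii}^k = 0$ for each $k$, and since $h_{ii}^k = 0$ for $i \neq k$ this collapses to $h_{kk}^k = 0$. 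Hence $h$ vanishes identically and $f$ is totally geodesic.

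It then remains to apply Theorem~\ref{theoTG}, which lists the totally geodesic Lagrangian immersions as the Gauss maps of $S^n(r) \to S^{n+1}(1)$ and of $S^k(r_1)\times S^{n-k}(r_2) \to S^{n+1}(1)$. The product case must be excluded using the constant-angle hypothesis: by Theorem~\ref{theo1} the angle functions of such a Gauss map satisfy $\lambda_j = \cot(\theta_j + c)$, so the two distinct principal curvatures of a genuine product ($0<k<n$) yield two distinct angle functions, contradicting the assumption that all $\theta_j$ coincide. Therefore only the first possibility survives and $f$ is the Gauss map of a part of the standard embedding $S^n(r)\to S^{n+1}(1)$.

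I expect the main (admittedly minor) obstacle to be the bookkeeping in the symmetry argument, namely verifying that \emph{every} off-diagonal component of $h$ can be brought into the form $h_{ij}^k$ with $j\neq k$ so that \eqref{intcond2} applies, before minimality is used to kill the diagonal terms $h_{ii}^i$. This must be stated carefully so that no component is overlooked, and it should be paired with a clean justification that a genuine product really produces distinct angle functions. Neither point is deep, but both are where the argument could silently go wrong if rushed.
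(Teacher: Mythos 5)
Your proof is correct, but it follows a genuinely different route from the paper's. The paper's argument is: normalize $A$ as in Example~\ref{ex2} so that the common angle $\theta$ satisfies $n\theta=0\bmod\pi$, hence is constant; then the converse part of Theorem~\ref{theo1} exhibits $f$ as the Gauss map of an isoparametric hypersurface whose principal curvatures $\cot(\theta+c)$ all coincide, i.e.\ a totally umbilical hypersurface, which is a piece of a round sphere. You instead first prove that $f$ is totally geodesic, by feeding $\theta_j-\theta_k\equiv 0\bmod\pi$ into \eqref{intcond2} to kill every $h_{ij}^k$ with $j\neq k$, extending to all non-fully-diagonal components by the total symmetry of $h$, and then using minimality to kill $h_{kk}^k$; after that you invoke Theorem~\ref{theoTG} and rule out the product case because a genuine product has two distinct principal curvatures and hence, via $\lambda_j=\cot(\theta_j+c)$ and Lemma~\ref{lem4}, two distinct angle functions modulo $\pi$. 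Both arguments are sound and of comparable length; yours has the merit of isolating the pointwise fact that equal angle functions force total geodesy (without first establishing constancy of the angles), while the paper's is more economical because it reuses the correspondence of Theorem~\ref{theo1} directly and identifies the hypersurface as umbilical in one stroke. The two bookkeeping points you flag (rearranging indices so that \eqref{intcond2} applies, and distinctness of the angles in the product case) are handled correctly.
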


\begin{proof}
	Assume that the angle functions appearing in the statement of the theorem correspond to $A_0 \in \mathcal A$ and choose the almost product structure $A \in \mathcal A$ as in Example \ref{ex2}. By Lemma \ref{lem4}, the angle functions corresponding to $A$ are still all equal and, since their sum vanishes modulo $\pi$, they are all constant. By Theorem \ref{theo1}, the immersion is the Gauss map of a totally umbilical hypersurface of $S^{n+1}(1)$, which proves the theorem.
\end{proof}

The following theorem explicitly describes minimal Lagrangian  immersions with constant angle functions in $Q^3$.

\begin{theorem} \label{theoCAdim3}
	Let $f:M^3 \to Q^3$ be a minimal Lagrangian immersion and choose $A \in \mathcal A$ as in Example \ref{ex2}. Assume that the corresponding angle functions are constant and denote by $g$ the number of different constant angle functions modulo $\pi$. Then
	$f$ is one of the following:
	\begin{itemize}
		\item[(i)] $f$ is the Gauss map of a part of the standard embedding $S^3(r) \to S^4(1)$ if $g=1$;
		\item[(ii)] $f$ is the Gauss map of a part of the standard embedding $S^1(r_1) \times S^2(r_2) \to S^4(1)$ if $g=2$;
		\item[(iii)] $f$ is the Gauss map of a part of one of Cartan's isoparametric hypersurfaces: a tube around the Veronese surface in $S^4(1)$ if $g=3$.
	\end{itemize}
	In the third case, the metric induced by $f$ gives $M^3$ constant sectional curvature $1/8$.
\end{theorem}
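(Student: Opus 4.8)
The plan is to reduce the statement to the classification of isoparametric hypersurfaces of $S^4(1)$ and then to settle the curvature claim in the exceptional case $g=3$ by a direct computation. Since the angle functions $\theta_1,\theta_2,\theta_3$ associated with the choice of $A$ in Example \ref{ex2} are assumed constant, Theorem \ref{theo1} shows that $f$ is the Gauss map of an isoparametric hypersurface of $S^4(1)$ and that $g$ equals the number of distinct principal curvatures of that hypersurface. By the theorem of M\"unzner \cite{MunznerI, MunznerII}, $g \in \{1,2,3,4,6\}$ and the multiplicities alternate, so the dimension $n=3$ of the hypersurface equals $gm$ when $g$ is odd and $\frac{g}{2}(m_1+m_2)$ when $g$ is even. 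As $m,m_1,m_2 \geq 1$, the case $g=4$ would force $n\geq 4$ and the case $g=6$ would force $n\geq 6$; hence $g \in \{1,2,3\}$. Corollary \ref{cor2} then yields case (i) for $g=1$; for $g=2$ the multiplicities are $\{1,2\}$, giving the product $S^1(r_1)\times S^2(r_2)$ of case (ii); and for $g=3$ the common multiplicity must be $m=1$, so the hypersurface is a tube around the Veronese surface $\mathbb{R}P^2 \to S^4(1)$ of case (iii).

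It then remains to compute the sectional curvature in case (iii). With $A$ chosen as in Example \ref{ex2}, Corollary \ref{cor1} gives that the one-form $s$ vanishes on $TM^3$, so \eqref{intcond1} implies $h_{jj}^i = 0$ for all $i,j$. By the total symmetry of the components of $h$, the only possibly nonzero component is $\mu := h_{123}$. Since the three principal curvatures of a $g=3$ hypersurface are $\cot(\psi+(k-1)\pi/3)$, the relation $\lambda_j=\cot(\theta_j+c)$ of Theorem \ref{theo1} shows that $\theta_1,\theta_2,\theta_3$ are equally spaced modulo $\pi$ with gap $\pi/3$, so that every pairwise difference satisfies $\cos^2(\theta_i-\theta_j)=1/4$. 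Substituting $h_{jj}^i=0$ into \eqref{seccurv} then gives $K_{ij} = 2\cos^2(\theta_i-\theta_j) - \mu^2 = \frac12 - \mu^2$ for every pair, so the metric is pointwise isotropic and only the value of $\mu$ remains to be determined.

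The key step, and the main obstacle, is to pin down $\mu$ by computing the curvature a second, intrinsic way. Using \eqref{intcond2} with the explicit angles, all connection forms are proportional to $\mu$ with coefficients $\cot(\theta_j-\theta_k)\in\{\pm 1/\sqrt3\}$; the only nonzero ones are $\omega_1^2(e_3)$, $\omega_1^3(e_2)$ and $\omega_2^3(e_1)$. Computing $R(e_1,e_2)e_2$ directly from $\nabla_{e_i}e_j = \sum_k \omega_j^k(e_i) e_k$ and the resulting Lie brackets yields $K_{12} = g(R(e_1,e_2)e_2,e_1) = \mu^2/3$, while the vanishing of the $e_3$-component (required by the symmetries of $R$, or equivalently by \eqref{gauss}) forces $e_i(\mu)=0$, so $\mu$ is constant. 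Comparing with the Gauss-equation value $\frac12-\mu^2$ gives $\frac{\mu^2}{3} = \frac12 - \mu^2$, hence $\mu^2 = \frac38$ and $K_{ij} = \frac18$ for every pair. This shows that $M^3$ has constant sectional curvature $1/8$ in case (iii), completing the proof.
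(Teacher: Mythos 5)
Your proof is correct, but it reaches the value $1/8$ by a genuinely different route than the paper. The paper's own argument stays entirely inside its Lagrangian framework: after observing (as you do) that Corollary \ref{cor1} and \eqref{intcond1} kill all components $h^i_{jj}$, it splits into the cases $h_{12}^3=0$ (totally geodesic, handled by Theorem \ref{theoTG}, giving (i) and (ii)) and $h_{12}^3\neq 0$, and in the latter case it evaluates the Codazzi equation \eqref{codazzi} three times to obtain the relations \eqref{(h123)_1}--\eqref{(h123)_3}; these \emph{simultaneously} force $\{\theta_1,\theta_2,\theta_3\}=\{0,\pi/3,-\pi/3\}$ and $(h_{12}^3)^2=3/8$, after which \eqref{seccurv} gives $K_{ij}=1/8$. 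You instead import the equal spacing $\theta_i-\theta_j=\pm\pi/3,\pm 2\pi/3 \bmod \pi$ from the classical Cartan--M\"unzner description of $g=3$ isoparametric hypersurfaces, and then pin down $\mu^2$ by a single consistency condition: the sectional curvature computed intrinsically from the connection forms $\omega_j^k(e_i)=\cot(\theta_j-\theta_k)h_{ij}^k$ (which I checked gives $K_{12}=\mu^2/3$) must agree with the Gauss-equation value $\tfrac12-\mu^2$. This is not circular, because \eqref{intcond2} encodes the parallelism of $A$ up to $s$ and is independent of the Gauss equation; in content it is a repackaging of the same integrability information the paper extracts from Codazzi. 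What each approach buys: the paper's is self-contained and derives the angle values rather than quoting them, while yours leans on the external classification but replaces the three trigonometric Codazzi identities by one clean algebraic equation, and your M\"unzner dimension count cleanly explains why only $g\le 3$ occurs for $n=3$ (the paper obtains this instead from the dichotomy on $h_{12}^3$). Two minor remarks: the constancy of $\mu$ that you deduce from the vanishing $e_3$-component is not actually needed, since your identity $\mu^2/3=\tfrac12-\mu^2$ already holds pointwise; and your appeal to the total symmetry of $h^k_{ij}$ to reduce to the single component $\mu$ matches the paper exactly.
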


\begin{proof}
	Corollary \ref{cor1} and equation \eqref{intcond1} imply that all components of the second fundamental form for which at least two indices are the same, vanish. The only possibly non-zero component of $h$ is thus $h_{12}^3$ and we distinguish two cases.
	
	\emph{Case 1: $h_{12}^3 = 0$.} In this case, $f$ is totally geodesic and hence it is the Gauss map of the standard embedding $S^3(r) \to S^4(1)$ or $S^1(r_1) \times S^2(r_2) \to S^4(1)$ by Theorem \ref{theoTG}.
	
	\emph{Case 2: $h_{12}^3 \neq 0$.} In this case, it follows from equation \eqref{intcond2} that the constant angle functions $\theta_1$, $\theta_2$ and $\theta_3$ are mutually different. It then follows from Corollary \ref{cor2} that $f$ is the Gauss map of one of Cartan's isoparametric hypersurfaces of $S^4(1)$. Considering the $Je_1$-component of the Codazzi equation \eqref{codazzi} for $X=e_1$ and $Y=Z=e_2$, using \eqref{intcond2} and some elementary trigonometric identities, we obtain
	\begin{equation} \label{(h123)_1}
		(h_{12}^3)^2 = -\cos(\theta_1-\theta_2) \sin(\theta_2-\theta_3) \sin(\theta_3-\theta_1).
	\end{equation}
	Similarly, the $Je_2$-component of the Codazzi equation for $X=e_2$ and $Y=Z=e_3$ and the $Je_3$-component of the Codazzi equation for $X=e_3$ and $Y=Z=e_1$ yield
	\begin{align}
		& (h_{12}^3)^2 = -\sin(\theta_1-\theta_2) \cos(\theta_2-\theta_3) \sin(\theta_3-\theta_1), \label{(h123)_2} \\
		& (h_{12}^3)^2 = -\sin(\theta_1-\theta_2) \sin(\theta_2-\theta_3) \cos(\theta_3-\theta_1). \label{(h123)_3}
	\end{align}
	Combining equations \eqref{(h123)_1}--\eqref{(h123)_3}, yields $\{\theta_1,\theta_2,\theta_3\} = \{0,\pi/3,-\pi/3\}$, as always modulo $\pi$, and hence $(h_{12}^3)^2 = 3/8$. Finally, from \eqref{seccurv}, we obtain that the sectional curvature of any plane $\mathrm{span}\{e_i,e_j\}$ is given by $K_{ij} = 2\cos^2(\theta_i-\theta_j)-(h_{12}^3)^2 = 1/8$.
\end{proof}

\section{Minimal Lagrangian submanifolds of $Q^n$ with constant sectional curvature}

The classifications in Theorem \ref{theoTG} and Theorem \ref{theoCAdim3} include examples of minimal Lagrangian submanifolds of $Q^n$ with constant sectional curvature: the Gauss map of a round sphere and the Gauss map of one of Cartan's examples. In this section, we prove Theorem \ref{theoCSCconclusion}, i.e., we classify all minimal Lagrangian submanifolds of $Q^n$ with constant sectional curvature, for arbitrary $n \geq 2$.  This classification can be regarded as a counterpart of the classic result by Ejiri \cite{Ejiri1982} for the case of complex space form, while the proof is completely different from that in \cite{Ejiri1982}.

\subsection{General results}

We first prove some results which are valid for any dimension $n \geq 2$. This is the key step of the proof of Theorem \ref{theoCSCconclusion}.

\begin{lemma}\label{lem5.1}
	
	Let $f: M^n \to Q^n$ be a  Lagrangian submanifold with constant sectional curvature. Assume that an almost product structure $A=\cos\varphi A_{\eta}+\sin\varphi JA_{\eta} \in \mathcal A$ is fixed on $Q^n$ and let $\{e_1,\ldots,e_n\}$ be a local orthonormal frame on $M^n$ diagonalizing  the associated operators $B$ and $C$. Denote by $\theta_1,\ldots,\theta_n$ the angle functions as defined above. Then
	\begin{equation} \label{CSCeq0}
		\begin{aligned}
			& \sin(\theta_i-\theta_j) \sin(\theta_i+\theta_j-2\theta_k) (\delta_{k \ell}h(e_i,e_j) + h_{ij}^{\ell} Je_k) \\
			& + \sin(\theta_j-\theta_k) \sin(\theta_j+\theta_k-2\theta_i) (\delta_{i \ell}h(e_j,e_k) + h_{jk}^{\ell} Je_i) \\
			& + \sin(\theta_k-\theta_i) \sin(\theta_k+\theta_i-2\theta_j) (\delta_{j \ell}h(e_i,e_k) + h_{ik}^{\ell} Je_j) = 0
		\end{aligned}
	\end{equation}
	for all $i,j,k,\ell = 1,\ldots,n$. In particular,
	\begin{align}
		\label{CSCeq1} & h_{ii}^k \sin(\theta_i-\theta_k) \sin(\theta_i+\theta_k-2\theta_j) = h_{jj}^k \sin(\theta_j-\theta_k) \sin(\theta_j+\theta_k-2\theta_i), \\
		\label{CSCeq2} & h_{ij}^k \sin(\theta_i-\theta_j) \sin(\theta_i+\theta_j-2\theta_k) = 0
	\end{align}
	for $i,j,k = 1,\ldots,n$ mutually different, and
	\begin{align}
		\label{CSCeq3} & h_{ij}^k \sin(\theta_i-\theta_j) \sin(\theta_i+\theta_j-2\theta_{\ell}) = 0
	\end{align}
	for $i,j,k,\ell = 1,\ldots,n$ mutually different.
\end{lemma}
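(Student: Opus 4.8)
The plan is to derive the master identity \eqref{CSCeq0} by combining the Gauss equation \eqref{gauss} with the Codazzi equation \eqref{codazzi}, exploiting the hypothesis of constant sectional curvature. The natural starting point is to differentiate the Gauss equation. Since the sectional curvature is constant, the full curvature tensor $R$ of $M^n$ is that of a space form, namely $g(R(X,Y)Z,W) = c(g(Y,Z)g(X,W) - g(X,Z)g(Y,W))$, and in particular $\overline\nabla R = 0$. I would therefore write out the second Bianchi-type identity, or more directly take the covariant derivative of \eqref{gauss}, and feed in the Codazzi equation \eqref{codazzi} to convert derivatives of $h$ into curvature-like terms. The key observation is that the angle functions enter \eqref{gauss} through $B$ and $C$, whose covariant derivatives are governed by \eqref{nablab} and \eqref{nablac} in terms of $h$ and the one-form $s$; after choosing $A$ as in Example \ref{ex2} so that Corollary \ref{cor1} forces $s \equiv 0$, these simplify considerably.

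First I would compute $(\overline\nabla_{e_\ell} R)(e_i,e_j)e_k$ using the Gauss equation \eqref{gauss}, where the only terms that survive differentiation are those involving $B$ and $C$ (the purely metric terms are parallel) together with the quadratic-in-$h$ terms. Then I would set this equal to zero (constant curvature) and substitute \eqref{nablab}, \eqref{nablac} with $s=0$. Evaluating everything in the frame $\{e_1,\ldots,e_n\}$ that diagonalizes $B$ and $C$, using $Be_j = \cos(2\theta_j)e_j$ and $Ce_j = \sin(2\theta_j)e_j$ from \eqref{BC}, the trigonometric coefficients should collapse—via product-to-sum identities—into the factors $\sin(\theta_i-\theta_j)\sin(\theta_i+\theta_j-2\theta_k)$ seen in \eqref{CSCeq0}. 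The totally symmetric structure of $h^k_{ij}$ and the antisymmetry of $R$ in its first two slots are what organize the three cyclic terms.

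Once \eqref{CSCeq0} is established, specializing it yields \eqref{CSCeq1}--\eqref{CSCeq3} by a routine bookkeeping of indices: taking $i=j$ (so the middle and last cyclic terms recombine) and comparing the $Je_k$-components gives \eqref{CSCeq1}; taking $i,j,k$ mutually distinct and reading off the coefficient of $h(e_i,e_j)$ or of $Je_k$ isolates \eqref{CSCeq2}; and the four-index case \eqref{CSCeq3} comes from picking $\ell$ distinct from $i,j,k$ and extracting the $Je_k$-component, where only the first cyclic term contributes. These specializations are mechanical once the vector identity \eqref{CSCeq0} is in hand and the total symmetry of $h$ is invoked.

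\textbf{The hard part} will be the first step: carefully tracking the trigonometric coefficients through the differentiated Gauss equation so that they assemble into the precise product form $\sin(\theta_i-\theta_j)\sin(\theta_i+\theta_j-2\theta_k)$ rather than some unsimplified combination of $\cos(2\theta_i)$, $\sin(2\theta_j)$, etc. I expect repeated use of the identities $\cos(2\theta_i)-\cos(2\theta_j) = -2\sin(\theta_i+\theta_j)\sin(\theta_i-\theta_j)$ and $\sin(2\theta_i)-\sin(2\theta_j) = 2\cos(\theta_i+\theta_j)\sin(\theta_i-\theta_j)$, as in the proof of Proposition \ref{prop1}. A secondary subtlety is ensuring the reduction $s\equiv 0$ is legitimate: this requires minimality, so I would either assume $f$ minimal from the outset (as in Theorem \ref{theoCSCconclusion}) or verify that the identity \eqref{CSCeq0} holds without the $s=0$ simplification and only invoke Corollary \ref{cor1} afterward. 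The stated lemma assumes only constant sectional curvature, so the cleanest route is to derive \eqref{CSCeq0} keeping $s$ general and observe that the $s$-dependent contributions cancel by the antisymmetry of the cyclic sum.
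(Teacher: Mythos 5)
Your starting point is genuinely different from the paper's, and as written it has a gap. The paper does not differentiate the Gauss equation: it differentiates the \emph{Codazzi} equation, obtaining $(\overline\nabla^2 h)(W,X,Y,Z) - (\overline\nabla^2 h)(W,Y,X,Z) = (\overline\nabla T)(W,X,Y,Z)$ where $T$ denotes the right-hand side of \eqref{codazzi}, then takes the cyclic sum over $W,X,Y$. By the Ricci identity, the left-hand side becomes a sum of curvature terms acting on $h$ (using $R^{\perp}(W,X)h(Y,Z)=-JR(W,X)Jh(Y,Z)$ for a Lagrangian submanifold), and for constant sectional curvature this cyclic sum vanishes identically. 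What remains is $\sum^{\mathrm{cyclic}}(\overline\nabla T)=0$, and since $\overline\nabla T$ is \emph{linear} in $h$ (via \eqref{nablab}--\eqref{nablac}, with the $s$-terms cancelling in pairs exactly as you anticipated), evaluating in the diagonalizing frame gives the vector identity \eqref{CSCeq0} directly. Your route instead differentiates the Gauss equation; a single covariant derivative leaves terms of the form $g((\overline\nabla_V h)(Y,Z),h(X,W))$, which are quadratic (first derivative of $h$ times $h$) and cannot be removed by Codazzi, since Codazzi only controls the antisymmetrization of $\overline\nabla h$ in its first two slots while here the differentiation direction $V$ is fixed. Even if you upgrade to the full second-Bianchi cyclic sum (where these terms do pair up into $g(T(\cdot,\cdot,\cdot),h(\cdot,\cdot))$ via Codazzi), you obtain a scalar five-index identity obtained by contracting trigonometric tensors against $h$, and you have not shown that this identity is equivalent to the normal-bundle-valued identity \eqref{CSCeq0}; that equivalence is precisely the nontrivial content you would still owe.

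Two smaller points. First, your worry about $s$ is well placed and resolved the way you hoped: the lemma assumes no minimality, the paper keeps $s$ general, and the $s(W)$-contributions cancel two by two inside $\overline\nabla T$ before the cyclic sum is even taken. Second, your derivation of \eqref{CSCeq1} by setting $i=j$ in \eqref{CSCeq0} fails: with $i=j$ the first cyclic term carries the factor $\sin(\theta_i-\theta_j)=0$ and the remaining two terms are exact negatives of each other, so the identity degenerates to $0=0$. The correct specialization is to take $i,j,k$ mutually different and $\ell=k$: the $Je_i$- (or $Je_j$-) component then gives \eqref{CSCeq1} after using the total symmetry $h^i_{ij}=h^j_{ii}$, the $Je_k$-component gives \eqref{CSCeq2}, and the $Je_m$-component for $m\notin\{i,j,k\}$ gives \eqref{CSCeq3}.
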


\begin{proof} We start by taking the covariant derivative of the Codazzi equation \eqref{codazzi} along a vector field $W$:
	\begin{equation} \label{codazzider}
		(\overline\nabla^2 h)(W,X,Y,Z) - (\overline\nabla^2 h)(W,Y,X,Z) = (\overline\nabla T)(W,X,Y,Z),
	\end{equation}
	where  $T$ is the $(1,3)$-tensor field taking values in the normal bundle, given by
	\begin{equation} \label{eqT}
		T(X,Y,Z) = g(CY,Z) JBX - g(CX,Z) JBY - g(BY,Z) JCX + g(BX,Z) JCY
	\end{equation}
	for all $X$, $Y$ and $Z$ tangent to $M^n$.
	If we take a cyclic sum of \eqref{codazzider} over $W$, $X$ and $Y$, keeping $Z$ fixed, we see that the left hand side vanishes. Indeed, using the Ricci identity, denoting by $R^{\perp}$ the curvature tensor of the normal connection of $M^n$ in $Q^n$, by $R$ the curvature tensor of $M^n$ and by $c$ the constant sectional curvature of $M^n$, we have
	\begin{eqnarray*}
		\lefteqn{\sum_{W,X,Y}^{\mathrm{cyclic}} \left( (\overline\nabla^2 h)(W,X,Y,Z) - (\overline\nabla^2 h)(W,Y,X,Z) \right) } \\
		&=& \sum_{W,X,Y}^{\mathrm{cyclic}} \left( (\overline\nabla^2 h)(W,X,Y,Z) - (\overline\nabla^2 h)(X,W,Y,Z) \right) \\
		&=& \sum_{W,X,Y}^{\mathrm{cyclic}} \left( R^\perp(W,X)h(Y,Z) - h(R(W,X)Y,Z) - h(Y,R(W,X)Z) \right) \\
		&=& \sum_{W,X,Y}^{\mathrm{cyclic}} \left( -JR(W,X)Jh(Y,Z) - h(R(W,X)Y,Z) - h(Y,R(W,X)Z) \right) \\
		&=& -c \sum_{W,X,Y}^{\mathrm{cyclic}} \left( g(X,Jh(Y,Z))JW - g(W,Jh(Y,Z))JX + g(X,Y)h(W,Z) \right. \\
		& & \hspace{1.5cm} \left. - g(W,Y)h(X,Z) + g(X,Z)h(Y,W) - g(W,Z) h(Y,X) \right) \\
		&=& 0.
	\end{eqnarray*}
	This implies that
	\begin{equation} \label{eqnablaT0}
		\sum_{W,X,Y}^{\mathrm{cyclic}} (\overline\nabla T)(W,X,Y,Z) = 0.
	\end{equation}
	From \eqref{eqT}, we obtain immediately that
	\begin{eqnarray*}
		(\overline\nabla T)(W,X,Y,Z) &=& g((\nabla_WC)Y,Z)JBX + g(CY,Z)J(\nabla_WB)X \\
		&& - g((\nabla_WC)X,Z) JBY - g(CX,Z) J(\nabla_WB)Y \\
		&& - g((\nabla_WB)Y,Z) JCX - g(BY,Z) J(\nabla_WC)X \\
		&& + g((\nabla_WB)X,Z) JCY + g(BX,Z) J(\nabla_WC)Y,
	\end{eqnarray*}
	which, by \eqref{nablab} and \eqref{nablac}, is equivalent to	
	\begin{eqnarray*}
		(\overline\nabla T)(W,X,Y,Z) &=& -g(Jh(W,BY),Z)JBX - g(BJh(W,Y),Z)JBX \\
		&& - g(CY,Z)h(W,CX) + g(CY,Z)JCJh(W,X) \\
		&& + g(Jh(W,BX),Z)JBY + g(BJh(W,X),Z)JBY \\
		&& + g(CX,Z)h(W,CY) - g(CX,Z)JCJh(W,Y) \\
		&& - g(Jh(W,CY),Z)JCX - g(CJh(W,Y),Z)JCX \\
		&& - g(BY,Z)h(W,BX) + g(BY,Z)JBJh(W,X) \\
		&& + g(Jh(W,CX),Z)JCY + g(CJh(W,X),Z)JCY \\
		&& + g(BX,Z)h(W,BY) - g(BX,Z)JBJh(W,Y).
	\end{eqnarray*}	
	Remark that the terms involving $s(W)$ cancel two by two. When taking the cyclic sum over $W$, $X$ and $Y$, the expression simplifies to
	\begin{equation} \label{eqnablaT1}
		\begin{aligned}
			\sum_{W,X,Y}^{\mathrm{cyclic}} (\overline\nabla T)(W,X,Y,Z) = \sum_{W,X,Y}^{\mathrm{cyclic}} ( & -g(Jh(W,BY),Z)JBX - g(CY,Z)h(W,CX) \\
			& + g(Jh(W,BX),Z)JBY + g(CX,Z)h(W,CY) \\
			& - g(Jh(W,CY),Z)JCX - g(BY,Z)h(W,BX) \\
			& + g(Jh(W,CX),Z)JCY + g(BX,Z)h(W,BY) ). \\
		\end{aligned}
	\end{equation}
	By combining \eqref{eqnablaT0} and \eqref{eqnablaT1} for $W=e_i$, $X=e_j$, $Y=e_k$ and $Z=e_{\ell}$, we obtain \eqref{CSCeq0}.
	
	By taking $i$, $j$ and $k$ mutually different and $\ell=k$ in \eqref{CSCeq0}, we obtain \eqref{CSCeq1}, \eqref{CSCeq2} and \eqref{CSCeq3} up to renaming the indices.
\end{proof}

\begin{proposition} \label{propCSC1}
	Let $f:M^n \to Q^n$ be a minimal Lagrangian immersion such that $M^n$ has constant sectional curvature and choose $A \in \mathcal A$ as in Example \ref{ex2}. Then the local angle functions are either all the same or mutually different modulo $\pi$. In the former case, the immersion is totally geodesic and is the Gauss map of a part of the standard embedding $S^n(r) \to S^{n+1}(1)$.
\end{proposition}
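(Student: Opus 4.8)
The plan is to fix $A \in \mathcal A$ as in Example \ref{ex2}, so that Corollary \ref{cor1} gives $s \equiv 0$ on $TM^n$; then \eqref{intcond1} reduces to $h_{jj}^i = e_i(\theta_j)$ and minimality reads $\sum_j h_{jj}^i = 0$. I would work on the open dense subset where the number of distinct angle functions modulo $\pi$ is locally constant, and group the indices so that $i$ and $j$ lie in the same group exactly when $\theta_i \equiv \theta_j \bmod \pi$; the partition is then smooth and one may differentiate the relation $\theta_i = \theta_j$ within a group. The dichotomy is equivalent to showing that no group has size $\ge 2$ unless it is all of $\{1,\dots,n\}$, so I assume for contradiction that some group $G$ has $m := |G| \ge 2$ while $n - m \ge 1$, and aim to contradict constant curvature.

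First I would record the structural vanishing forced on the second fundamental form. By \eqref{intcond2}, whenever two distinct indices lie in the same group the corresponding component of $h$ vanishes; by total symmetry of $h_{ij}^k$ this gives $h(e_k,e_j)=0$ for $k,j\in G$ with $k\neq j$, and $h_{ii}^m=0$ for $i\in G$, $m\in G\setminus\{i\}$. Feeding $h_{jj}^i = e_i(\theta_j)$ into the same vanishing yields $e_k(\theta_{k'})=0$ for $k,k'\in G$, and since $\theta_k\equiv\theta_{k'}$ on the group this forces $h_{kk}^k = e_k(\theta_k)=0$ as well, so $h_{kk}^m=0$ for every $m\in G$. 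Finally \eqref{CSCeq1} evaluated at two indices $i,j\in G$ and one index $m\notin G$ collapses, using $\theta_i\equiv\theta_j$, to $h_{ii}^m = h_{jj}^m =: \mu_m$, independent of the chosen index in $G$. Hence $h(e_k,e_k)=\sum_{m\notin G}\mu_m\,Je_m =: \eta$ is the same vector for all $k\in G$.

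The contradiction then comes from comparing an intra-group sectional curvature with a Ricci curvature. For $k,k'\in G$ distinct, \eqref{seccurv} gives $c = K_{kk'} = 2 + g(\eta,\eta) = 2+|\eta|^2$, since $\theta_k\equiv\theta_{k'}$ and $h(e_k,e_{k'})=0$. On the other hand, using $\mathrm{Ric}=(n-1)c\,g$ together with minimality one obtains, for any $k$, the identity $(n-1)c = 2\sum_{j\neq k}\cos^2(\theta_k-\theta_j) - \|h_k\|^2$, where $\|h_k\|^2=\sum_{j}|h(e_k,e_j)|^2$. For $k\in G$ the in-group terms drop out, leaving $\|h_k\|^2 = |\eta|^2 + \sum_{j\notin G}|h(e_k,e_j)|^2$ and $\sum_{j\neq k}\cos^2(\theta_k-\theta_j) = (m-1) + \sum_{j\notin G}\cos^2(\theta_k-\theta_j)$. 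Substituting $c=2+|\eta|^2$ and simplifying, I expect to arrive at
\begin{equation*}
2(n-m) + n|\eta|^2 + \sum_{j\notin G}|h(e_k,e_j)|^2 = 2\sum_{j\notin G}\cos^2(\theta_k-\theta_j).
\end{equation*}
Since $\theta_k\not\equiv\theta_j \bmod\pi$ for $j\notin G$, every cosine square on the right is strictly less than $1$, so the right-hand side is $< 2(n-m)$, while the left-hand side is $\ge 2(n-m)$; this is the desired contradiction. Therefore no proper group of size $\ge 2$ exists and the angle functions are all equal or all mutually different modulo $\pi$.

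In the all-equal case, the common angle $\theta$ satisfies $n\theta\equiv 0\bmod\pi$ by the choice of Example \ref{ex2}, hence is constant; then $h_{jj}^i=e_i(\theta_j)=0$ and, by \eqref{intcond2} with equal angles, $h_{ij}^k=0$ for $j\neq k$, so total symmetry forces $h\equiv 0$ and $f$ is totally geodesic. Theorem \ref{theoSameAngles} then identifies $f$ as the Gauss map of a part of $S^n(r)\to S^{n+1}(1)$, and \eqref{seccurv} gives sectional curvature $2$. The main obstacle is assembling the structural vanishing of $h$ in the second paragraph correctly, in particular extracting the common value $\eta$ from \eqref{CSCeq1} and handling the locally-constant-multiplicity reduction, after which the Ricci computation closes the argument cleanly.
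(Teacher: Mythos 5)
Your proof is correct and follows essentially the same strategy as the paper's: group the indices by coincidence of the angles modulo $\pi$, show that $h$ vanishes on distinct pairs within a group and that $h(e_k,e_k)$ is a common vector $\eta$ for all $k$ in the group, then contradict constant curvature by comparing the in-group sectional curvature $2+\|\eta\|^2\geq 2$ with the cross-group ones. The only differences are minor: the paper obtains the equality of the $h(e_k,e_k)$ by polarization (Step 2 of its proof) rather than via \eqref{CSCeq1}, and it closes by selecting a single index $\ell_0$ with $g(h(e_1,e_1),h(e_{\ell_0},e_{\ell_0}))\leq 0$ instead of your Ricci-trace inequality --- both steps exploit minimality in the same way.
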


\begin{proof}
	Assume first that all angle functions are the same modulo $\pi$. Then the last part of the proposition follows from Theorem \ref{theoSameAngles}.
	
	Now consider the case that at least two angle functions are different modulo $\pi$. We have to prove that this implies that \emph{all} the angle functions are mutually different modulo $\pi$. This is trivial for $n=2$, so we assume from now on that $n \geq 3$. Proceeding by contradiction, we assume that $\theta_1 = \cdots = \theta_m \mod \pi$ for some $m \in \{2,\ldots,n-1\}$ and that $\theta_{\ell} \neq \theta_1 \mod \pi$ for all $\ell > m$.
	
	\textit{Step 1: If $X,Y \in \mathrm{span}\{e_1,\ldots,e_m\}$ and $X \perp Y$, then $h(X,Y)=0$.} After changing the orthonormal frame $\{e_1,\ldots,e_m\}$ if necessary, we may assume that $X$ is a scalar multiple of $e_1$ and $Y$ is a scalar multiple of $e_2$. It suffices to prove that $h_{12}^{\ell}=0$ for any $\ell \in \{1,\ldots,n\}$. If $\ell \leq m$, putting $i=\ell$, $j=1$ and $k=2$ in \eqref{intcond2}, gives $h_{12}^{\ell}=0$. If $\ell > m$ on the other hand, taking $i=1$, $j=\ell$ and $k=2$ in \eqref{CSCeq2}, gives $h_{12}^{\ell}=0$. This proves the claim.
	
	\textit{Step 2: If $X,Y \in \mathrm{span}\{e_1,\ldots,e_m\}$ and $\|X\|=\|Y\|$, then $h(X,X)=h(Y,Y)$.} This follows immediately from Step 1 by noting that $X+Y \perp X-Y$ and using the bilinearity and the symmetry of $h$.
	
	\textit{Step 3: $M^n$ cannot have constant sectional curvature.} By using \eqref{seccurv}, Step 1 and Step 2, we obtain that the sectional curvature of the plane spanned by $e_1$ and $e_2$ satisfies
	$$ K_{12} = 2 + g(h(e_1,e_1),h(e_2,e_2)) - g(h(e_1,e_2),h(e_1,e_2)) = 2 + \|h(e_1,e_1)\|^2 \geq 2. $$
	On the other hand, if $\ell > m$, then, again by using \eqref{seccurv}, the sectional curvature of the plane spanned by $e_1$ and $e_{\ell}$ satisfies
	$$ K_{1 \ell} = 2 \cos^2(\theta_1-\theta_{\ell}) + g(h(e_1,e_1),h(e_{\ell},e_{\ell})) - g(h(e_1,e_{\ell}),h(e_1,e_{\ell})) < 2 + g(h(e_1,e_1),h(e_{\ell},e_{\ell})). $$
	Remark that the inequality is strict since $\theta_1 \neq \theta_{\ell} \mod \pi$. It is now sufficient to prove that there is at least one $\ell_0 > m$ for which $g(h(e_1,e_1),h(e_{\ell_0},e_{\ell_0})) \leq 0$ since, for this particular index $\ell_0$, one has $K_{1 \ell_0} < 2$ and hence $K_{1 \ell_0} \neq K_{12}$. To prove the existence of such an index $\ell_0$, we remark that, due to minimality and Step 2,
	$$ m \, h(e_1,e_1) + \sum_{\ell = m+1}^n h(e_{\ell},e_{\ell}) = 0. $$
	Taking the inner product with $h(e_1,e_1)$ yields
	$$ m \, \|h(e_1,e_1)\|^2 +  \sum_{\ell = m+1}^n g(h(e_1,e_1),h(e_{\ell},e_{\ell})) = 0,$$
	which shows that indeed one of the terms in the sum must be non-positive.
	
	This contradiction completes the proof.
\end{proof}

\begin{proposition} \label{propCSC2}
	Let $f:M^n \to Q^n$ be a minimal Lagrangian immersion such that $M^n$ has constant sectional curvature. If there exist three mutually different indices $i$, $j$ and $k$ such that $h_{ij}^k \neq 0$, then $n=3$ and the immersion is part of the Gauss map of a tube around the Veronese surface in $S^4(1)$.
\end{proposition}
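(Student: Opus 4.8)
The plan is to choose the almost product structure $A \in \mathcal{A}$ as in Example \ref{ex2}, so that by Corollary \ref{cor1} the one-form $s$ vanishes on $TM^n$ and \eqref{intcond1} reduces to $e_i(\theta_j) = h_{jj}^i$. Since the hypothesis supplies a nonzero component $h_{ij}^k$ with $i,j,k$ mutually different, the immersion is not totally geodesic, and hence Proposition \ref{propCSC1} forces all angle functions $\theta_1,\ldots,\theta_n$ to be mutually different modulo $\pi$. Throughout I would exploit the total symmetry of $h_{ij}^k$ in its three indices: all six permutations of $h_{ij}^k$ are equal to one another and nonzero, and all factors $\sin(\theta_a-\theta_b)$ with $a\neq b$ are nonzero because the angles are mutually distinct.

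First I would rule out $n \geq 4$. Assuming $n \geq 4$, pick a fourth index $\ell \notin \{i,j,k\}$; by the previous step $\theta_\ell$ is distinct modulo $\pi$ from $\theta_i,\theta_j,\theta_k$. Applying \eqref{CSCeq3} to the three nonzero components $h_{ij}^k$, $h_{jk}^i$, $h_{ik}^j$ against the \emph{common} fourth index $\ell$ then yields
\[
\theta_i + \theta_j \equiv \theta_j + \theta_k \equiv \theta_i + \theta_k \equiv 2\theta_\ell \pmod{\pi}.
\]
Comparing the first two congruences gives $\theta_i \equiv \theta_k \pmod{\pi}$, contradicting that the angle functions are mutually different. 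Hence $n=3$. I expect this to be the main (and essentially the only delicate) step; the key idea is that the symmetry of $h$ lets one run \eqref{CSCeq3} simultaneously on every permutation of the nonzero component against a single extra index, producing a contradiction as soon as a fourth index exists.

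Once $n=3$, write the three distinct indices as $1,2,3$, so $h_{12}^3 \neq 0$. Feeding the three permutations of this component into \eqref{CSCeq2} gives the relations $\theta_1 + \theta_2 \equiv 2\theta_3$, $\theta_2 + \theta_3 \equiv 2\theta_1$ and $\theta_1 + \theta_3 \equiv 2\theta_2$ modulo $\pi$; subtracting them pairwise shows that each difference $\theta_a-\theta_b$ is a multiple of $\pi/3$, hence (being a continuous function valued in a discrete set) locally constant. Combined with $\theta_1+\theta_2+\theta_3 \equiv 0 \pmod{\pi}$ from Example \ref{ex2}, this forces all three angle functions to be constant. Finally, since $f:M^3 \to Q^3$ is then a minimal Lagrangian immersion with constant angle functions and $h_{12}^3 \neq 0$ (so it is not totally geodesic), Theorem \ref{theoCAdim3} places us in its case (iii): $f$ is part of the Gauss map of a tube around the Veronese surface in $S^4(1)$, which completes the proof.
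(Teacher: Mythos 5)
Your proposal is correct and follows essentially the same route as the paper: both reduce to Lemma \ref{lem5.1} (equations \eqref{CSCeq2}--\eqref{CSCeq3}) together with Proposition \ref{propCSC1} to force the angle congruences, derive a contradiction for $n\geq 4$ from the distinctness of the angles, and then conclude via constancy of the angles and Theorem \ref{theoCAdim3}. The only (immaterial) difference is which permutations of the identities you invoke: the paper uses \eqref{CSCeq2} once and \eqref{CSCeq3} twice with indices $(1,2,3,4)$ and $(1,3,2,4)$, while you run \eqref{CSCeq3} on all permutations of the nonzero component against a single fourth index, which is a slightly more symmetric packaging of the same contradiction.
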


\begin{proof}
	Without loss of generality, we assume that $h_{12}^3 \neq 0$. We will prove that the assumption $n \geq 4$ leads to a contradiction. Remark that, since $f$ is not totally geodesic, it follows from Proposition \ref{propCSC1} that all the angle functions are different modulo $\pi$. Applying  \eqref{CSCeq2} for $(i,j,k)=(1,2,3)$, we obtain $\sin(\theta_1+\theta_2-2\theta_3)=0$ and applying  \eqref{CSCeq3} for $(i,j,k,\ell)=(1,2,3,4)$, we obtain $\sin(\theta_1+\theta_2-2\theta_4)=0$. By combining these two equations, we find that $2\theta_3=2\theta_4 \mod \pi$. Again from \eqref{CSCeq3}, but now for $(i,j,k,\ell)=(1,3,2,4)$, we obtain $\sin(\theta_1+\theta_3-2\theta_4)=0$. By combining this with $2\theta_3=2\theta_4 \mod \pi$, we obtain $\sin(\theta_1-\theta_3)=0$, and hence $\theta_1=\theta_3\mod\pi$, which is a contradiction.
	Hence, we obtain that $n=3$.
	
	To prove the second part of the theorem, it suffices, in view of Theorem \ref{theoCAdim3} and its proof, to prove that all the angle functions are constant. We choose $A \in \mathcal A$ as in Example \ref{ex2}, such that $\theta_1+\theta_2+\theta_3=0 \mod\pi$. By taking $(i,j,k)=(1,2,3)$, respectively $(i,j,k)=(1,3,2)$, in \eqref{CSCeq2} and using $h_{12}^3 \neq 0$, we obtain $\theta_1+\theta_2-2\theta_3=0 \mod\pi$, respectively $\theta_1+\theta_3-2\theta_2=0 \mod\pi$. Combining all three relations for $\theta_1$, $\theta_2$ and $\theta_3$ implies that all three functions are constant.
\end{proof}

\subsection{Classification in dimension $n=2$}

The complex hyperquadric $Q^2$ is isometric to the product of spheres $S^2(1/2) \times S^2(1/2)$, as can be deduced for example from \cite{CU} or \cite{TU2015}. Obviously, the almost product structure related to this splitting is different from the non-integrable almost product structures in $\mathcal A$. It was proven in \cite{CU} that a minimal Lagrangian surface with constant Gaussian curvature in $S^2(1) \times S^2(1)$ must be totally geodesic. In combination with Theorem \ref{theoTG}, we obtain the following.

\begin{proposition} \label{propCSCn=2}
	Let $f:M^2 \to Q^2$ be a minimal Lagrangian immersion such that $M^2$ has constant Gaussian curvature. Then the immersion is totally geodesic and $f$ is the Gauss map of the standard embedding $S^2(r) \to S^3(1)$ or $S^1(r_1) \times S^1(r_2) \to S^3(1)$. In the first case, $M^2$ has constant Gaussian curvature $2$ and in the second case, $M^2$ is flat.
\end{proposition}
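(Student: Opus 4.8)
The plan is to reduce Proposition \ref{propCSCn=2} to two facts that are already available: the classification of totally geodesic Lagrangian submanifolds in Theorem \ref{theoTG}, and the external result of \cite{CU} stating that a minimal Lagrangian surface of constant Gaussian curvature in $S^2(1) \times S^2(1)$ is necessarily totally geodesic. The key observation is that $Q^2$ is isometric to a product of two round spheres, so that the general machinery of this paper can be bypassed in the two-dimensional case and the problem can be transported into the well-studied setting of minimal Lagrangian surfaces in a product of spheres.

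First I would record the isometry $Q^2 \cong S^2(1/2) \times S^2(1/2)$, citing \cite{CU} or \cite{TU2015}, and note carefully the rescaling: the result in \cite{CU} concerns $S^2(1) \times S^2(1)$, so I must either rescale the metric or invoke the scale-invariance of the \emph{total geodesy} conclusion. Since being totally geodesic is preserved under homotheties of the ambient space, and since constant Gaussian curvature is preserved (up to the scaling factor) as well, the hypothesis of $f$ being minimal Lagrangian of constant curvature transfers to the same hypothesis in $S^2(1) \times S^2(1)$. Applying the theorem of \cite{CU} then forces $f$ to be totally geodesic.

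Once total geodesy is established, I would feed this directly into Theorem \ref{theoTG}, which classifies all totally geodesic Lagrangian immersions into $Q^n$. Specialized to $n=2$, it says that $f$ is the Gauss map of a part of the standard embedding $S^2(r) \to S^3(1)$ or of $S^1(r_1) \times S^1(r_2) \to S^3(1)$. The same theorem supplies the curvature values: constant sectional curvature $2$ in the first case and, in the case $k=1$, $n=2$, the flat case in the second. This pins down precisely the statement of the proposition.

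The step I expect to require the most care is the bookkeeping around the metric normalization in the isometry $Q^2 \cong S^2(1/2)\times S^2(1/2)$, since the factor of $1/2$ affects the numerical value of the Gaussian curvature and one must ensure that the hypothesis ``constant Gaussian curvature'' is genuinely equivalent on both sides before invoking \cite{CU}. Conceptually, however, this is the gentlest of the constant-curvature propositions: unlike the higher-dimensional arguments, which rely on the delicate interplay of \eqref{CSCeq1}--\eqref{CSCeq3} and the frame analysis of Proposition \ref{propCSC1} and Proposition \ref{propCSC2}, the $n=2$ case is essentially a citation combined with Theorem \ref{theoTG}. No intrinsic use of the angle-function relations is needed here, precisely because the low dimension lets us appeal to the existing surface-theoretic classification in the product of spheres.
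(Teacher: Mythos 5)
Your proposal is correct and follows exactly the paper's own argument: identify $Q^2$ with $S^2(1/2)\times S^2(1/2)$, invoke the result of \cite{CU} (noting the harmless rescaling) to conclude total geodesy, and then apply Theorem \ref{theoTG} for the classification and curvature values. The extra care you take with the metric normalization is a minor refinement of a point the paper passes over silently.
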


\subsection{Classification in dimension $n=3$}

The following theorem gives a complete classification of minimal Lagrangian submanifolds of $Q^3$ with constant sectional curvature.

\begin{proposition}\label{propCSCn=3}
	Let $f:M^3\to Q^3$ be a Lagrangian minimal immersion with constant sectional curvature, then either $M^3$ has constant sectional curvature $2$ and $f$ is the Gauss map of a part of the standard embedding $S^3(r) \to S^4(1)$, or $M^3$ has constant sectional curvature $1/8$ and $f$ is the Gauss map of a part of a tube around a Veronese surface in $S^4(1)$.
\end{proposition}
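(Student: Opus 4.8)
The plan is to prove Proposition \ref{propCSCn=3} by combining the dimension-$3$ structural results already established for constant angle functions (Theorem \ref{theoCAdim3}) with the two general propositions of this section that control the second fundamental form under the constant-sectional-curvature hypothesis. The strategy is a dichotomy based on whether the totally symmetric component $h_{12}^3$ (the only component with three distinct indices available in dimension $3$) vanishes or not.

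First I would fix $A \in \mathcal A$ as in Example \ref{ex2}, so that $\theta_1+\theta_2+\theta_3 = 0 \mod \pi$ and, by Corollary \ref{cor1}, the one-form $s$ vanishes on $TM^3$. Proposition \ref{propCSC1} then tells us that the angle functions are either all equal modulo $\pi$ or mutually different modulo $\pi$. In the first case, the same proposition asserts that $f$ is totally geodesic and is the Gauss map of a part of $S^3(r) \to S^4(1)$; the induced metric then has constant sectional curvature $2$, which one reads off from \eqref{seccurv} since all $\theta_i$ coincide and $h \equiv 0$, giving $K_{ij} = 2\cos^2(0) = 2$. This disposes of the first alternative in the statement completely.

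For the remaining case, where the three angle functions are mutually different modulo $\pi$, I would split further according to $h_{12}^3$. If $h_{12}^3 \neq 0$, then the indices $1,2,3$ are mutually distinct with $h_{12}^3 \neq 0$, so Proposition \ref{propCSC2} applies directly: it forces $f$ to be part of the Gauss map of a tube around the Veronese surface in $S^4(1)$, and by the computation at the end of the proof of Theorem \ref{theoCAdim3} the constant sectional curvature equals $1/8$. The genuinely substantive step is to rule out the complementary subcase $h_{12}^3 = 0$ with the $\theta_i$ mutually different. Here all off-diagonal components vanish: the components with a repeated index vanish by Corollary \ref{cor1} together with \eqref{intcond1} (constant angles make $h_{jj}^i = s(e_i)/2 = 0$ once we note $e_i(\theta_j)$ need not vanish in general, so more care is needed), and the only triple-distinct-index component $h_{12}^3$ is assumed zero. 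I expect this subcase to be the main obstacle, because a priori the angle functions are only known to be constant after invoking Theorem \ref{theo1}, which itself requires constant angles as input.

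To close this gap I would argue that $h_{12}^3 = 0$ together with mutually different angles already forces $f$ to be totally geodesic, contradicting the assumption that not all angles coincide: indeed, if $h_{12}^3$ is the only potentially nonzero component and it vanishes, then $h \equiv 0$, so $f$ is totally geodesic, and Theorem \ref{theoTG} then restricts the number of distinct angle functions to at most two, contradicting the fact that $\theta_1,\theta_2,\theta_3$ are mutually distinct. Hence this subcase is empty and the only surviving possibility in the mutually-different case is $h_{12}^3 \neq 0$, already handled above. Assembling the three outcomes---all angles equal (giving $S^3(r)$ and curvature $2$), mutually different with $h_{12}^3 = 0$ (impossible), and mutually different with $h_{12}^3 \neq 0$ (giving the Veronese tube and curvature $1/8$)---yields exactly the two cases in the statement and completes the proof.
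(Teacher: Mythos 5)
Your overall dichotomy (all angles equal versus mutually different, then $h_{12}^3\neq 0$ versus $h_{12}^3=0$) is exactly the skeleton of the paper's proof, and the first three branches are handled correctly: the equal-angle case via Proposition \ref{propCSC1} and Theorem \ref{theoSameAngles}, and the case $h_{12}^3\neq 0$ via Proposition \ref{propCSC2}. However, there is a genuine gap in the final branch, and you half-noticed it before arguing past it. When the angle functions are mutually different modulo $\pi$ and $h_{12}^3=0$, it is \emph{not} true that $h_{12}^3$ is the only potentially nonzero component of $h$. By Corollary \ref{cor1} and \eqref{intcond1}, the components with a repeated index satisfy $h_{jj}^i=e_i(\theta_j)$, and these vanish precisely when the angle functions are constant --- which is exactly what is unknown here. (The sentence in the proof of Theorem \ref{theoCAdim3} that you are implicitly borrowing, namely that all components with two equal indices vanish, uses the standing hypothesis of that theorem that the angles are constant; that hypothesis is absent in Proposition \ref{propCSCn=3}.) Consequently the conclusion ``$h\equiv 0$, hence totally geodesic, hence at most two distinct angles by Theorem \ref{theoTG}, contradiction'' does not follow: a priori $f$ could be a non-totally-geodesic immersion with non-constant, mutually distinct angle functions and $h_{12}^3=0$, driven entirely by the derivatives $e_i(\theta_j)$.

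Ruling this configuration out is in fact the bulk of the paper's proof. The authors set $x=\sin(\theta_1-\theta_2)\sin(\theta_1+\theta_2-2\theta_3)$ and its cyclic analogues $y,z$ (which satisfy $x+y+z=0$), use \eqref{CSCeq1} to obtain the linear relations $h_{22}^1x+h_{33}^1z=0$, etc., and then split into three cases according to how many of $x,y,z$ vanish identically. The hardest case (none of them zero) requires parametrizing the diagonal components by auxiliary functions $\alpha_1,\alpha_2,\alpha_3$, computing all their derivatives from the Codazzi equation, and combining a Codazzi identity with a difference of Gauss equations to force the angles to be constant --- after which Theorem \ref{theoCAdim3} yields $h_{12}^3\neq 0$, contradicting the case assumption. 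Your proposal needs an argument of comparable substance at this point; as written, the contradiction you derive rests on an unproved vanishing of the components $h_{jj}^i$.
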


\begin{proof}
	We choose $A \in \mathcal A$ as in Example \ref{ex2}, then we have that $\theta_1+\theta_2+\theta_3=0 \mod \pi$. It follows from Proposition \ref{propCSC1} that the angle functions are either all equal modulo $\pi$ or mutually different  modulo $\pi$ and that, in the former case, we obtain the first case in the proposition. Assume from now on that all the angle functions $\theta_1, \theta_2, \theta_3$ are different modulo $\pi$.
	
	If $h_{12}^3 \neq 0$, it follows from Proposition \ref{propCSC2} that we obtain the Gauss map of a part of a tube around a Veronese surface in $S^4(1)$. We will assume from now on that $h_{12}^3=0$ and prove that no new examples can occur.  We denote
	\begin{equation*}
		\begin{aligned}
			x&=\sin{(\theta_1-\theta_2)}\sin{(\theta_1+\theta_2-2\theta_3)},\\
			y&=\sin{(\theta_2-\theta_3)}\sin{(\theta_2+\theta_3-2\theta_1)},\\
			z&=\sin{(\theta_3-\theta_1)}\sin{(\theta_3+\theta_1-2\theta_2)}.
		\end{aligned}
	\end{equation*}
	By using elementary trigonometric identities, we have that $x+y+z = 0$. Moreover, \eqref{CSCeq1} is equivalent to
	\begin{equation} \label{dim3eq1}
		h_{22}^1 x + h_{33}^1 z = 0, \qquad h_{11}^2 x + h_{33}^2 y = 0, \qquad h_{22}^3 y + h_{11}^3 z = 0.
	\end{equation}
	We now distinguish three cases.
	
	\textit{Case 1: At least two of the local functions $x$, $y$ and $z$ are identically zero.} Since the sum of the three functions vanishes, we know that all three of them are identically zero. Moreover, since the local angle functions $\theta_1$, $\theta_2$ and $\theta_3$ are mutually different modulo $\pi$, it follows that $\theta_1+\theta_2-2\theta_3$, $\theta_2+\theta_3-2\theta_1$ and $\theta_3+\theta_1-2\theta_2$ are all integer multiples of $\pi$. Together with $\theta_1+\theta_2+\theta_3=0\mod\pi$, this implies that the angle functions are all constants that are different modulo $\pi$ and it follows from Theorem \ref{theoCAdim3} that $f$ is the Gauss map of a part of a tube around a Veronese surface in $S^4(1)$, which contradicts $h_{12}^3=0$.
	
	\textit{Case 2: Exactly one of the local functions $x$, $y$ and $z$ is identically zero.} Without loss of generality, we may assume that $x=0$, Remark that $y=-z \neq 0$. It follows from \eqref{dim3eq1} that $h_{33}^1=h_{33}^2=0$ and $h_{11}^3=h_{22}^3$. Since $x=0$, we have $\theta_1+\theta_2-2\theta_3=0 \mod\pi$ and deriving this equality, using \eqref{intcond1}, yields $h_{11}^i + h_{22}^i - 2h_{33}^i = 0$ for all $i\in\{1,2,3\}$. On the other hand, the minimality condition implies $h_{11}^i + h_{22}^i + h_{33}^i = 0$, so we obtain $h_{11}^i + h_{22}^i=0$ and $h_{33}^i=0$ for all $i\in\{1,2,3\}$. Since we already have $h_{12}^3 = h_{33}^1 = h_{33}^2 = 0$ and $h_{11}^3 = h_{22}^3$, the only possibly non-zero components of the second fundamental form are $h_{11}^1 = -h_{22}^1$ and $h_{11}^2 = -h_{22}^2$. By \eqref{seccurv}, the sectional curvatures of the planes spanned by $\{e_1,e_2\}$, $\{e_1,e_3\}$ and $\{e_2,e_3\}$ are $K_{12} = \cos^2(\theta_1-\theta_2)-2(h_{11}^1)^2-2(h_{11}^2)^2$, $K_{13} = \cos^2(\theta_1-\theta_3)$ and $K_{23} = \cos^2(\theta_2-\theta_3)$. It follows from the latter two equalities that $\theta_1-\theta_3$ and $\theta_2-\theta_3$ are constant, and hence, by taking derivatives using \eqref{intcond1} and $h_{33}^i=0$, that $h_{11}^i = 0$ and $h_{22}^i = 0$. We conclude that the submanifold is totally geodesic, but by comparing $K_{12}$ and $K_{13}$, we then see that $y=0$, which is a contradiction.
	
	\textit{Case 3: None of the local functions $x$, $y$ and $z$ are identically zero.} We work on an open subset of $M^n$ where none of the functions vanish. It follows from \eqref{dim3eq1} and the minimality condition that there are local functions $\alpha_1$, $\alpha_2$ and $\alpha_3$ on this subset such that
	\begin{equation}\label{dim3eq2}
		\begin{aligned}
			& h^1_{22} = \alpha_1 z, & h^1_{33} = -\alpha_1 x, && h^1_{11} = \alpha_1(x-z), \\
			& h^2_{33} = \alpha_2 x, & h^2_{11} = -\alpha_2 y, && h^2_{22} = \alpha_2 (y-x),\\
			& h^3_{11} = \alpha_3 y, & h^3_{22} = -\alpha_3 z, && h^3_{33} = \alpha_3(z-y). \\
		\end{aligned}
	\end{equation}
	From these equations and \eqref{intcond1}, we obtain expressions for the derivatives of the components of $h$ in terms of the $\alpha_1$, $\alpha_2$, $\alpha_3$ and their derivatives. Substituting these into the Codazzi equation \eqref{codazzi} for $(X,Y,Z)=(e_1,e_2,e_3)$ and using $\theta_1+\theta_2+\theta_3=0 \mod \pi$ to eliminate $\theta_3$, we obtain
	\begin{equation*}
		\begin{aligned}
			& e_1(\alpha_3) = \frac{1}{4} \, \alpha_1 \, \alpha_3 \csc(2\theta_1+\theta_2) \Big( 5\cos(2\theta_1+\theta_2) - 7\cos(3\theta_2) + 2\cos(4\theta_1-\theta_2) \\ & \hspace{8cm} - \cos(4\theta_1+5\theta_2) + \cos(6\theta_1+3\theta_2) \Big), \\
			& e_2(\alpha_3) = \frac{1}{4} \, \alpha_2 \, \alpha_3 \csc(\theta_1+2\theta_2) \Big( 5\cos(\theta_1+2\theta_2) - 7 \cos(3\theta_1)+ 2\cos(\theta_1-4\theta_2)  \\ & \hspace{8cm} - \cos(5\theta_1+4\theta_2) + \cos(3\theta_1+6\theta_2) \Big).
		\end{aligned}
	\end{equation*}
	Similarly, for $(X,Y,Z)=(e_2,e_3,e_1)$ we obtain
	\begin{equation*}
		\begin{aligned}
			& e_2(\alpha_1) = -\frac{1}{4} \alpha_1 \, \alpha_2 \csc(\theta_1-\theta_2) \Big( 5\cos(\theta_1-\theta_2) - 7 \cos(3\theta_1+3\theta_2)+ 2 \cos(\theta_1+5\theta_2)  \\ & \hspace{8cm}  - \cos(5\theta_1+\theta_2)+ \cos(3\theta_1-3\theta_2) \Big),\\
			& e_3(\alpha_1)= -\frac{1}{4} \alpha_1 \, \alpha_3 \csc(2\theta_1+\theta_2) \Big( 5\cos(2\theta_1+\theta_2) -7\cos(3\theta_2) +2\cos(4\theta_1+5\theta_2)  \\ & \hspace{8cm} - \cos(4\theta_1-\theta_2)  + \cos(6\theta_1+3\theta_2) \Big)
		\end{aligned}
	\end{equation*}
	and for $(X,Y,Z)=(e_3,e_1,e_2)$ we obtain
	\begin{equation*}
		\begin{aligned}
			& e_1(\alpha_2) = \frac{1}{4} \alpha_1 \, \alpha_2 \csc(\theta_1-\theta_2) \Big( 5\cos(\theta_1-\theta_2) - 7 \cos(3\theta_1+3\theta_2) + 2 \cos(5\theta_1+\theta_2) \\ & \hspace{8cm} - \cos(\theta_1+5\theta_2) + \cos(3\theta_1-3\theta_2)\Big), \\
			& e_3(\alpha_2) = -\frac{1}{4} \alpha_2 \, \alpha_3 \csc(\theta_1+2\theta_2) \Big( 5\cos(\theta_1+2\theta_2) - 7\cos(3\theta_1)+ 2 \cos(5\theta_1+4\theta_2)\\ & \hspace{8cm} - \cos(\theta_1-4\theta_2)  + \cos(3\theta_1+6\theta_2)  \Big).
		\end{aligned}
	\end{equation*}
	Note that we are in the case $x y z\neq 0$, using these expressions above for the derivatives of $\alpha_1$, $\alpha_2$ and $\alpha_3$, the $Je_1$-component of the Codazzi equation for $(X,Y,Z)=(e_2,e_1,e_1)$ yields $\alpha_1\alpha_2=0$, the $Je_2$-component of the Codazzi equation for $(X,Y,Z)=(e_3,e_2,e_2)$ yields $\alpha_2\alpha_3=0$ and the $Je_3$-component of the Codazzi equation for $(X,Y,Z)=(e_1,e_3,e_3)$ yields $\alpha_1\alpha_3=0$. This implies that at least two of the functions $\{\alpha_1,\alpha_2,\alpha_3\}$ vanish identically. Because of the symmetry of the problem, we may assume without loss of generality that $\alpha_1=\alpha_2=0$.
	
	The $Je_2$-component of the Codazzi equation for $(X,Y,Z)=(e_1,e_2,e_1)$ then gives
	\begin{equation} \label{eqn=3.1}
		\alpha_3^2 = -2\csc(3\theta_1)\csc(3\theta_2)\cos(\theta_1-\theta_2),
	\end{equation}
	whereas the $Je_1$-component of the Codazzi equation for $(X,Y,Z)=(e_3,e_1,e_3)$ gives
	\begin{equation} \label{eqn=3.2}
		\begin{aligned}
			e_3(\alpha_3)=
			& \frac{1}{16} \csc(3\theta_1) \csc(\theta_1+2\theta_2) \csc(2\theta_1+\theta_2) [-32 \sin^2(2\theta_1+\theta_2)\cos(2\theta_1+\theta_2) \\
			& - \alpha_3^2[15\cos(2\theta_1+\theta_2) - \cos(8\theta_1+7\theta_2) + \cos(8\theta_1+\theta_2) + 4 \cos(6\theta_1+3\theta_2) \\
			& - \cos(10\theta_1+5\theta_2) + 4 \cos(6\theta_1-3\theta_2)  - 16 \cos(4\theta_1-\theta_2) + \cos(2\theta_1-5\theta_2) \\	
			& - 8 \cos(3\theta_2) + \cos(2\theta_1+7\theta_2)]].
		\end{aligned}
	\end{equation}
	Substituting \eqref{eqn=3.1} and \eqref{eqn=3.2} into the Codazzi equation for $(X,Y,Z)=(e_3,e_2,e_3)$ gives
	\begin{equation} \label{5.13}
		-5\cos(\theta_1-\theta_2) + 2\cos(3(\theta_1-\theta_2)) + (1+2\cos(2(\theta_1-\theta_2)))\cos(3(\theta_1+\theta_2))=0. \end{equation}
	On the other hand, the difference between the Gauss equations for $(X,Y,Z,W)=(e_1,e_2,e_2,e_1)$ and $(X,Y,Z,W)=(e_1,e_3,e_3,e_1)$ gives the following equation for $\alpha_3$.
	\begin{equation} \label{5.14} 1+\alpha_3^2\big(\cos(2(\theta_1-\theta_2))-\cos(\theta_1-\theta_2)\cos(3(\theta_1+\theta_2)\big)=0. \end{equation}
	Substituting \eqref{eqn=3.1} into  \eqref{5.14}, we obtain
	\begin{equation} \label{5.15}
		-2\cos(\theta_1-\theta_2) -\cos(3(\theta_1-\theta_2)) + (1+2\cos(2(\theta_1-\theta_2)))\cos(3(\theta_1+\theta_2))=0. \end{equation}
	By combining \eqref{5.13} and \eqref{5.15}, we obtain that all angle functions are constant. Since they are mutually different  modulo $\pi$, it follows from Theorem \ref{theoCAdim3} that $f$, restricted to the open subset of $M^n$ on which we are working, is the Gauss map of a part of a tube around a Veronese surface in $S^4(1)$, which contradicts $h_{12}^3=0$.
\end{proof}

\subsection{Classification in dimension $n=4$}
The following proposition shows that, for $n=4$, no new examples of minimal Lagrangian submanifolds of $Q^4$ with constant sectional curvature occur.

\begin{proposition} \label{propCSCn=4}
	Let $f:M^4 \to Q^4$ be a minimal Lagrangian immersion such that $M^4$ has constant sectional curvature. Then $M^4$ has constant sectional curvature $2$ and $f$ is the Gauss map of the standard embedding $S^4(r) \to S^5(1)$.
\end{proposition}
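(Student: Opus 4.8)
The plan is to reduce to the dichotomy provided by Proposition \ref{propCSC1} and then rule out the non-totally-geodesic alternative. First I would fix $A\in\mathcal A$ as in Example \ref{ex2}, so that by Corollary \ref{cor1} the one-form $s$ vanishes on $TM^4$, the angle functions satisfy $\theta_1+\theta_2+\theta_3+\theta_4=0\bmod\pi$, and \eqref{intcond1} reduces to $e_k(\theta_i)=h_{ii}^k$. Proposition \ref{propCSC1} then leaves exactly two possibilities: either all four angle functions coincide modulo $\pi$, in which case the immersion is totally geodesic and is the Gauss map of a part of $S^4(r)\to S^5(1)$ with $c=2$, which is the asserted conclusion; or the four angle functions are mutually different modulo $\pi$. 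The whole point is therefore to show that this second alternative cannot occur.

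So assume the four angle functions are mutually different. Since $n=4\neq 3$, Proposition \ref{propCSC2} forbids $h_{ij}^k\neq 0$ for mutually different $i,j,k$; hence, by total symmetry of $h$, the only components that can survive are $a_i:=h_{ii}^i$ and $b_{ij}:=h_{ii}^j$ with $i\neq j$. I would next record two structural facts. Every such component is a directional derivative of an angle function, namely $a_i=e_i(\theta_i)$ and $b_{ij}=e_j(\theta_i)$ by \eqref{intcond1}; consequently, if all angle functions were constant then $h$ would vanish identically, so $f$ would be totally geodesic, and Theorem \ref{theoTG} would force at most two distinct angle functions, contradicting our assumption. Thus at least one angle function is non-constant and $h\not\equiv 0$. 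Moreover, by minimality $a_i=-\sum_{k\neq i}b_{ki}$, so the vanishing of all $b_{ij}$ on an open set would force $h\equiv 0$ there; hence some $b_{ij}$ is non-zero on a non-empty open set $U\subset M^4$, on which I would work from now on.

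On $U$ the key engine is \eqref{CSCeq1}, which for mutually different $i,j,k$ reads $b_{ik}\,S_{ik}^{j}=b_{jk}\,S_{jk}^{i}$ with $S_{ik}^{j}:=\sin(\theta_i-\theta_k)\sin(\theta_i+\theta_k-2\theta_j)$. For each fixed single index $k$ these relations link the three quantities $b_{ik}$, $i\neq k$: where the relevant factors are non-zero they are pairwise proportional, and traversing the cycle produces a trigonometric compatibility identity in $\theta_1,\dots,\theta_4$. Together with $\theta_1+\theta_2+\theta_3+\theta_4=0\bmod\pi$, the six Gauss equations $K_{ij}=c$ coming from \eqref{seccurv}, and the Codazzi equations \eqref{codazzi} with two coinciding indices, this yields a heavily overdetermined system. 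Following the strategy of Case~3 in the proof of Proposition \ref{propCSCn=3}, I would introduce proportionality factors for the surviving $b_{ij}$, read off their first derivatives from Codazzi, and substitute back into the remaining Codazzi and Gauss relations. A case split according to which of the $S_{ik}^{j}$ vanish identically on $U$, the analogue of Cases 1--3 for $n=3$, organizes the argument; in every branch the expected outcome is that all four angle functions are forced to be constant, contradicting the previous paragraph and thereby eliminating the mutually-different case.

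The main obstacle is exactly this last, generic branch: extracting the derivatives of the proportionality factors from Codazzi and then showing that the surviving Gauss and Codazzi identities collapse to constancy of the angles is a long trigonometric computation, heavier than for $n=3$ because there are four angles, six curvature equations, and many more components to track. The one simplification that keeps it finite is that all triple-distinct-index components of $h$ already vanish by Proposition \ref{propCSC2}, so the resulting system is genuinely polynomial in the $b_{ij}$ and trigonometric in the differences $\theta_i-\theta_j$; I would expect to discharge it with computer algebra, exactly as the displayed Codazzi computations in the proof of Proposition \ref{propCSCn=3} suggest.
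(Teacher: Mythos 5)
Your reduction is sound and matches the paper's: fix $A$ as in Example \ref{ex2}, invoke Proposition \ref{propCSC1} to split into the totally geodesic case (which gives the conclusion) and the case of four mutually distinct angle functions, use Proposition \ref{propCSC2} to kill all $h_{ij}^k$ with distinct indices, and observe via \eqref{intcond1} that constancy of all angles would force $h\equiv 0$ and hence, by Theorem \ref{theoTG}, at most two distinct angles --- so the target is to prove the angles constant. Up to there you are aligned with the paper.

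The gap is that the decisive step --- actually forcing the angles to be constant --- is not carried out; you defer it to ``a long trigonometric computation'' modelled on Case~3 of the $n=3$ proof (proportionality factors $\alpha_i$, their Codazzi derivatives, computer algebra). That template is not what makes $n=4$ work, and it is not clear it terminates: with four angles and nine surviving components $h^i_{jj}$ the brute-force system is much larger, and the paper does something structurally different. For each fixed $i$ the three quantities $h^i_{jj}$, $j\neq i$, satisfy the \emph{homogeneous} $3\times 3$ linear system \eqref{eqn=4.1} coming from \eqref{CSCeq1}; whenever they are not all zero the determinant \eqref{det} must vanish, which (since the angles are distinct) yields the purely angular identity \eqref{eqn=4.2}. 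The second key move, absent from your plan, is to \emph{differentiate} \eqref{eqn=4.2} along $e_i$ using \eqref{intcond1} to manufacture a fourth linear relation \eqref{eqn=4.3} in the same unknowns, and then impose vanishing of the resulting $3\times 3$ determinants $\Delta^i_{kk}$; suitable combinations collapse to the clean factored conditions \eqref{eqn=4.4}, whose subcases all force constant angles. The paper moreover organizes the argument not by which factors $S^j_{ik}$ vanish, but by how many indices $i$ have $(h^i_{11},\dots,h^i_{44})\neq 0$ (at least three, exactly two, exactly one), with the last case handled by comparing the two expressions \eqref{eqn=4.7} and \eqref{eqn=4.8} for the sectional curvatures rather than by the determinant machinery. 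Without the determinant-of-\eqref{eqn=4.1} identity and its differentiated companion, your ``generic branch'' remains an unverified computation, so as written the proposal does not yet constitute a proof.
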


\begin{proof}
	Choose $A \in \mathcal A$ as in Example \ref{ex2}. By Theorem \ref{theoTG}, it suffices to show that $f$ is a totally geodesic immersion. We know from Proposition \ref{propCSC2} that $h_{ij}^k=0$ for all mutually different indices $i$, $j$ and $k$, so we only have to show that $h^i_{jj}=0$ for all $i,j \in \{1,2,3,4\}$. We will proceed by contradiction and distinguish three cases.
	
	\smallskip
	
	\emph{Case 1: There are at least three different indices $i$ for which $h^i_{11}$, $h^i_{22}$, $h^i_{33}$ and $h^i_{44}$ are not all zero.} Without loss of generality, we may assume that these three indices are $1$, $2$ and $3$. Remark that if $h^i_{ii} \neq 0$, there exists also an index $j \neq i$ such that $h^i_{jj} \neq 0$ due to minimality. For every $i \in \{1,2,3\}$ we consider the following system of equations coming from \eqref{CSCeq1}:
	\begin{equation} \label{eqn=4.1}
		\left\{ \begin{array}{l}
			\sin(\theta_j-\theta_i) \sin(\theta_j+\theta_i-2\theta_k) h_{jj}^i - \sin(\theta_k-\theta_i) \sin(\theta_k+\theta_i-2\theta_j) h_{kk}^i = 0, \\
			\sin(\theta_k-\theta_i) \sin(\theta_k+\theta_i-2\theta_{\ell}) h_{kk}^i - \sin(\theta_{\ell}-\theta_i) \sin(\theta_{\ell}+\theta_i-2\theta_k) h_{\ell\ell}^i = 0, \\
			\sin(\theta_{\ell}-\theta_i) \sin(\theta_{\ell}+\theta_i-2\theta_j) h_{\ell\ell}^i - \sin(\theta_j-\theta_i) \sin(\theta_j+\theta_i-2\theta_{\ell}) h_{jj}^i = 0,
		\end{array} \right.
	\end{equation}
	where $\{j,k,\ell\}=\{1,2,3,4\}\setminus\{i\}$. By our assumption, the determinant of this system of linear equations in $h^i_{jj}$, $h^i_{kk}$ and $h^i_{\ell\ell}$ must vanish. A straightforward computation shows that this determinant is
	\begin{multline} \label{det}  2\sin(\theta_j-\theta_i)\sin(\theta_k-\theta_i)\sin(\theta_{\ell}-\theta_i)\sin(\theta_k-\theta_j)\sin(\theta_{\ell}-\theta_j)\sin(\theta_{\ell}-\theta_k)\\
		(\cos(\theta_i+\theta_j-\theta_k-\theta_{\ell})+\cos(\theta_i-\theta_j+\theta_k-\theta_{\ell})+\cos(\theta_i-\theta_j-\theta_k+\theta_{\ell})).
	\end{multline}
	Since all angle functions are different modulo $\pi$ by Proposition \ref{propCSC1}, we obtain
	\begin{equation} \label{eqn=4.2}
		\cos(\theta_i+\theta_j-\theta_k-\theta_{\ell})+\cos(\theta_i-\theta_j+\theta_k-\theta_{\ell})+\cos(\theta_i-\theta_j-\theta_k+\theta_{\ell}) = 0.
	\end{equation}
	Taking the derivative of \eqref{eqn=4.2} in the direction of $e_i$, using \eqref{intcond1}, Corollary \ref{cor1}, the minimality condition and some elementary trigonometric identities, yields
	\begin{equation} \label{eqn=4.3}
		\sin(\theta_i-\theta_j)\cos(\theta_k-\theta_\ell)h_{jj}^i + \sin(\theta_i-\theta_k)\cos(\theta_j-\theta_{\ell})h_{kk}^i + \sin(\theta_i-\theta_{\ell})\cos(\theta_j-\theta_k)h_{\ell\ell}^i = 0.
	\end{equation}
	This is another linear equation in $h_{jj}^i$, $h_{kk}^i$ and $h_{\ell\ell}^i$ and the determinant of the system formed by any two equations from \eqref{eqn=4.1} and \eqref{eqn=4.3} must be zero. We will denote the determinant of \eqref{eqn=4.3} and the first two equations from \eqref{eqn=4.1} (which both involve $h_{kk}^i$) by $\Delta^i_{kk}$.  A straightforward computation shows that the equations \begin{equation*}
		\begin{aligned}
			&(\Delta^1_{33}+\Delta^1_{44})/(\sin{(\theta_1-\theta_3)}\sin{(\theta_1-\theta_4)})+(\Delta^2_{33}+\Delta^2_{44})/(\sin{(\theta_2-\theta_3)}\sin{(\theta_2-\theta_4)})=0,\\ &(\Delta^2_{11}+\Delta^2_{44})/(\sin{(\theta_2-\theta_1)}\sin{(\theta_2-\theta_4)})+(\Delta^3_{11}+\Delta^3_{44})/(\sin{(\theta_3-\theta_1)}\sin{(\theta_3-\theta_4)})=0,\\ &(\Delta^3_{22}+\Delta^3_{44})/(\sin{(\theta_3-\theta_2)}\sin{(\theta_3-\theta_4)})+(\Delta^1_{22}+\Delta^1_{44})/(\sin{(\theta_1-\theta_2)}\sin{(\theta_1-\theta_4)})=0,
		\end{aligned}
	\end{equation*}
	are equivalent to
	\begin{align}
		& \sin(\theta_1+\theta_2-\theta_3-\theta_4)(\cos(\theta_4-\theta_3)+\cos(3(\theta_4-\theta_3))) = 0, \nonumber \\
		& \sin(\theta_2+\theta_3-\theta_1-\theta_4)(\cos(\theta_4-\theta_1)+\cos(3(\theta_4-\theta_1))) = 0, \label{eqn=4.4} \\
		& \sin(\theta_3+\theta_1-\theta_2-\theta_4)(\cos(\theta_4-\theta_2)+\cos(3(\theta_4-\theta_2))) = 0, \nonumber
	\end{align}
	respectively. We distinguish two subcases.
	
	\smallskip
	
	\emph{Case 1.1: $\sin(\theta_1+\theta_2-\theta_3-\theta_4)\sin(\theta_2+\theta_3-\theta_1-\theta_4)\sin(\theta_3+\theta_1-\theta_2-\theta_4)=0$.} Assume that $\sin(\theta_1+\theta_2-\theta_3-\theta_4)=0$, the other two cases are analogous. Together with $\theta_1+\theta_2+\theta_3+\theta_4=0 \mod \pi$, we obtain $\theta_1+\theta_2=0 \mod \pi$ and $\theta_3+\theta_4=0 \mod \pi$, so that \eqref{eqn=4.2} is equivalent to
	\begin{equation} \label{eqn=4.5}
		1 + 2 \cos(2\theta_1)\cos(2\theta_3) = 0.
	\end{equation}
	Deriving $\theta_3+\theta_4=0 \mod \pi$ in the direction of $e_1$ gives $h^1_{33}+h^1_{44}=0$, so it follows from the equation involving $h^1_{33}$ and $h^1_{44}$ in \eqref{eqn=4.1} for $i=1$ that $(\cos(2\theta_1)\cos(2\theta_3)-\cos(4\theta_3))h^1_{33}=0$, which, in combination with \eqref{eqn=4.5}, yields $(1 + 2\cos(4\theta_3))h^1_{33}=0$.
	
	If $1 + 2\cos(4\theta_3)=0$, it follows from \eqref{eqn=4.5}, $\theta_1+\theta_2=0 \mod \pi$ and $\theta_3+\theta_4=0 \mod \pi$ that all local angle functions are constant. But from \eqref{intcond1} we then obtain that all $h^i_{jj}$ are zero, which contradicts the assumption that we made for Case 1.
	
	If, on the other hand, $h^1_{33}=0$, and hence also $h^1_{44}=0$, the assumption for Case 1 yields $h^1_{22} \neq 0$ and the equations involving $h^1_{22}$ in \eqref{eqn=4.1} for $i=1$ become $\sin(\theta_1+\theta_2-2\theta_3) = \sin(\theta_1+\theta_2-2\theta_4) = 0$. Since $\theta_1+\theta_2=0 \mod \pi$ and $\theta_3+\theta_4=0 \mod \pi$, both equations reduce to $\sin(2\theta_3)=0$. Again, from \eqref{eqn=4.5}, $\theta_1+\theta_2=0 \mod \pi$ and $\theta_3+\theta_4=0 \mod \pi$, we obtain that all local angle functions are constant, which is a contradiction.
	
	\smallskip
	
	\emph{Case 1.2: $\sin(\theta_1+\theta_2-\theta_3-\theta_4)\sin(\theta_2+\theta_3-\theta_1-\theta_4)\sin(\theta_3+\theta_1-\theta_2-\theta_4) \neq 0$.} It follows from \eqref{eqn=4.4} that there exist $k_1,k_2,k_3 \in \mathbb Z \setminus 4\mathbb Z$ such that $\theta_1 = \theta_4 + k_1\pi/4$, $\theta_2 = \theta_4 + k_2\pi/4$ and $\theta_3 = \theta_4 + k_3\pi/4$. By combining this with $\theta_1+\theta_2+\theta_3+\theta_4=0 \mod\pi$ we obtain that all local angle functions are constant, which is a contradiction as before.
	
	\smallskip
	
	\emph{Case 2: There are exactly two different indices $i$ for which $h^i_{11}$, $h^i_{22}$, $h^i_{33}$ and $h^i_{44}$ are not all zero.} Without loss of generality, we may assume that these indices are $1$ and $2$. As before, we can then obtain the first equation of \eqref{eqn=4.4}. If $\sin(\theta_1+\theta_2-\theta_3-\theta_4)=0$, we proceed as in Case~1.1 to obtain a contradiction. If $\cos(\theta_4-\theta_3)+\cos(3(\theta_4-\theta_3))=0$, then $\theta_4 = \theta_3 + k\pi/4$ for some $k \in \mathbb Z \setminus 4\mathbb Z$. Deriving this equation in the direction of $e_i$ gives $h^i_{33}=h^i_{44}$ for all $i\in\{1,2,3,4\}$ and the equations involving $h^1_{33}$ and $h^1_{44}$ in \eqref{eqn=4.1} for $i=1$, respectively $h^2_{33}$ and $h^2_{44}$ in \eqref{eqn=4.1} for $i=2$, reduce to
	\begin{equation} \label{eqn=4.6}
		\begin{aligned}
			& \sin(2\theta_3-2\theta_1+k\pi/4)h^1_{33}=0, \\
			& \sin(2\theta_3-2\theta_2+k\pi/4)h^2_{33}=0.
		\end{aligned}
	\end{equation}
	We again distinguish two subcases.
	
	\smallskip
	
	\emph{Case 2.1: $\sin(2\theta_3-2\theta_1+k\pi/4)\sin(2\theta_3-2\theta_2+k\pi/4)=0$.} Assume that $\sin(2\theta_3-2\theta_1+k\pi/4)=0$, the other case is analogous. Then $\theta_3=\theta_1-k\pi/8+\ell\pi/2$, and hence $\theta_4=\theta_1+k\pi/8+\ell\pi/2$, for some $\ell \in \mathbb Z$ and $k \in \mathbb Z \setminus 4\mathbb Z$.  Equation \eqref{eqn=4.2} is equivalent to $\cos(\theta_2-\theta_1)=0$, such that $\theta_2=\theta_1+\pi/2+m\pi$ for some $m \in \mathbb Z$. This means that all local angle functions can be written as $\theta_1$ plus a constant and it follows from $\theta_1+\theta_2+\theta_3+\theta_4=0 \mod\pi$ that they are all constant, which gives the desired contradiction.
	
	\emph{Case 2.2: $\sin(2\theta_3-2\theta_1+k\pi/4)\sin(2\theta_3-2\theta_2+k\pi/4) \neq 0$.} It then follows from \eqref{eqn=4.6} that $h^1_{33}=h^2_{33}=0$ and hence also $h^1_{44}=h^2_{44}=0$. By the assumption that we made for Case~2, $h^1_{22} \neq 0$. From the equations involving $h^1_{22}$ in \eqref{eqn=4.1} for $i=1$ we have $\sin(\theta_1+\theta_2-2\theta_3)=\sin(\theta_1+\theta_2-2\theta_4)=0$. If $k$ is odd, this is a contradiction. If $k$ is even, we can conclude from $\sin(\theta_1+\theta_2-2\theta_3)=0$, $\theta_4=\theta_3+k\pi/4$ and $\theta_1+\theta_2+\theta_3+\theta_4=0 \mod\pi$ that $\theta_1+\theta_2$, $\theta_3$ and $\theta_4$ are constant. To finish the proof in this case, we compute the sectional curvature of the plane spanned by $e_1$ and $e_3$ using \eqref{seccurv}. Since $h^1_{33}=h^2_{33}=h^3_{11}=h^4_{11}=h^3_{12}=h^4_{13}=0$, we obtain that $K_{13}=2\cos^2(\theta_3-\theta_1)$ is constant. Again, we see that all the local angle functions are constant, which is a contradiction.
	
	\smallskip
	
	\emph{Case 3: There is exactly one index $i$ for which $h^i_{11}$, $h^i_{22}$, $h^i_{33}$ and $h^i_{44}$ are not all zero.} Without loss of generality, we may assume that $i=1$ and hence $h^2_{jj}=h^3_{jj}=h^4_{jj}=0$ for all $j$. Denote by $c$ the constant sectional curvature of $M^n$. A straightforward computation of the sectional curvatures using the definition of the curvature tensor and \eqref{intcond2} gives
	\begin{align}
		& c = K_{23} = -h^1_{22} h^1_{33} \cot(\theta_2-\theta_1)  \cot(\theta_3-\theta_1), \nonumber \\
		& c = K_{24} = -h^1_{22} h^1_{44} \cot(\theta_2-\theta_1)  \cot(\theta_4-\theta_1), \label{eqn=4.7} \\
		& c = K_{34} = -h^1_{33} h^1_{44} \cot(\theta_3-\theta_1)  \cot(\theta_4-\theta_1). \nonumber
	\end{align}
	On the other hand, from \eqref{seccurv}, it follows that
	\begin{align}
		& c = K_{23} = 2 \cos^2(\theta_3-\theta_2) + h^1_{22}h^1_{33}, \nonumber \\
		& c = K_{24} = 2 \cos^2(\theta_4-\theta_2) + h^1_{22}h^1_{44}, \label{eqn=4.8} \\
		& c = K_{34} = 2 \cos^2(\theta_4-\theta_3) + h^1_{33}h^1_{44}. \nonumber
	\end{align}
	
	Remark that, for a fixed $i \in \{1,2,3,4\}$, at most one of the functions $\cos(\theta_i-\theta_j)$, with $j \in \{1,2,3,4\} \setminus \{i\}$, can be zero since all the local angle functions are mutually different modulo $\pi$ by Proposition \ref{propCSC1}. In particular, this implies that $c \neq 0$. Indeed, for $c=0$, \eqref{eqn=4.7} would imply that $h^1_{jj}=0$ for at least one $j \in \{2,3,4\}$ and then \eqref{eqn=4.8} would imply that at least two of the functions $\cos(\theta_3-\theta_2)$, $\cos(\theta_4-\theta_3)$ and $\cos(\theta_4-\theta_2)$ are zero, which is impossible.  As $c\neq0$, we obtain from \eqref{eqn=4.7}  that $d:= h^1_{22} \cot(\theta_2-\theta_1) = h^1_{33} \cot(\theta_3-\theta_1) = h^1_{44} \cot(\theta_4-\theta_1)$ is a constant satisfying $c=-d^2$.
	
	Without loss of generality, we will assume from now on that $\cos(\theta_3-\theta_2)\cos(\theta_4-\theta_2) \neq 0$. From $c \neq 0$ and \eqref{eqn=4.7}, we have that none of the functions $\cos(\theta_j-\theta_1)$ with $j\in\{2,3,4\}$ is zero. When putting $h^1_{jj} = d/\cot(\theta_j-\theta_1)$ in the first two equations of \eqref{eqn=4.8}, using the assumption $\cos(\theta_3-\theta_2)\cos(\theta_4-\theta_2) \neq 0$, we obtain
	\begin{align*}
		c = 2 \cos(\theta_3-\theta_2) \cos(\theta_2-\theta_1) \cos(\theta_3-\theta_1), \\
		c = 2 \cos(\theta_4-\theta_2) \cos(\theta_2-\theta_1) \cos(\theta_4-\theta_1),
	\end{align*}
	from which we get that $\cos(\theta_3-\theta_2)\cos(\theta_3-\theta_1) = \cos(\theta_4-\theta_2)\cos(\theta_4-\theta_1)$ or, equivalently, $\sin(\theta_1+\theta_2-\theta_3-\theta_4)=0$. We can now proceed as in Case 1.1 to obtain a contradiction.
\end{proof}

\subsection{Classification in dimension $n\geq 5$}

The following proposition shows that, also for $n \geq 5$, no new examples of minimal Lagrangian submanifolds of $Q^n$ with constant sectional curvature occur.

\begin{proposition} \label{propCSCn>4}
	For $n \geq 5$, let $f:M^n \to Q^n$ be a minimal Lagrangian immersion such that $M^n$ has constant sectional curvature. Then $M^n$ has constant sectional curvature $2$ and $f$ is the Gauss map of the standard embedding $S^n(r) \to S^{n+1}(1)$.
\end{proposition}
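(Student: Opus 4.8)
The plan is to reduce, exactly as in the lower-dimensional cases, to proving that the immersion is totally geodesic and then to invoke Theorem \ref{theoTG}. First I would choose $A \in \mathcal A$ as in Example \ref{ex2}, so that $\theta_1 + \cdots + \theta_n = 0 \mod \pi$ and, by Corollary \ref{cor1}, the one-form $s$ vanishes on $TM^n$; then \eqref{intcond1} simply reads $e_i(\theta_j) = h_{jj}^i$. Since $n \geq 5 \neq 3$, Proposition \ref{propCSC2} forces $h_{ij}^k = 0$ whenever $i,j,k$ are mutually distinct, so the only components of $h$ that can survive are the $h_{jj}^i$. By Proposition \ref{propCSC1} the angle functions are either all equal modulo $\pi$, in which case $f$ is the Gauss map of $S^n(r) \to S^{n+1}(1)$ and $c = 2$ and we are done, or mutually different modulo $\pi$. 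It therefore suffices to show that in the mutually different branch all angle functions are forced to be constant: indeed, $e_i(\theta_j) = h_{jj}^i$ would then make all $h_{jj}^i$ vanish, so $f$ would be totally geodesic, but a totally geodesic Lagrangian immersion has at most two distinct angle functions by Theorem \ref{theoTG}, contradicting $n \geq 5$ mutually distinct angles. Hence only the first branch can occur.

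The engine for the mutually different branch is equation \eqref{CSCeq1}. Fixing the normal index $p$, it provides, for each pair of distinct lower indices $i,j$ (both different from $p$), a homogeneous linear relation between $h_{ii}^p$ and $h_{jj}^p$ with trigonometric coefficients. Since now $n-1 \geq 4$, each column $(h_{11}^p,\ldots,h_{nn}^p)$ carries at least four such entries, and choosing any three lower indices $j,k,\ell$ for which $(h_{jj}^p,h_{kk}^p,h_{\ell\ell}^p)$ is not identically zero produces, through the vanishing of the associated $3\times 3$ determinant as in \eqref{det}, a pure angle relation
\begin{equation*}
\cos(\theta_p+\theta_j-\theta_k-\theta_\ell)+\cos(\theta_p-\theta_j+\theta_k-\theta_\ell)+\cos(\theta_p-\theta_j-\theta_k+\theta_\ell)=0,
\end{equation*}
which is the $n$-dimensional analogue of \eqref{eqn=4.2}. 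Differentiating such relations along $e_p$, using $e_i(\theta_j)=h_{jj}^i$ and minimality as in the passage to \eqref{eqn=4.3}, yields further linear relations among the surviving components, so that the columns satisfy both \eqref{eqn=4.1}-type and \eqref{eqn=4.3}-type constraints simultaneously.

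From here I would run a case analysis on how many columns are not identically zero and how many nonzero entries each contains, mirroring Cases 1--3 of the proof of Proposition \ref{propCSCn=4}, but now with the extra indices supplying several independent instances of the determinant relation above and of its $e_p$-derivative. In each configuration one combines these with the Gauss equation \eqref{seccurv} for the planes $\mathrm{span}\{e_i,e_j\}$, whose curvatures must all equal the common constant $c$, to force $e_i(\theta_j)=h_{jj}^i=0$ for all $i,j$, i.e. all angle functions constant. As noted above, constant angles contradict their being $n \geq 5$ mutually distinct, so the mutually different branch is vacuous and Theorem \ref{theoTG} identifies $f$ as the Gauss map of $S^n(r) \to S^{n+1}(1)$ with $c=2$.

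The main obstacle is the bookkeeping of this case analysis together with the trigonometric simplifications: one must verify, uniformly in $n \geq 5$, that no sparsity pattern of the columns $(h_{jj}^i)$ is compatible with both constant sectional curvature and mutually distinct angle functions. Here I expect the abundance of indices to make the argument cleaner than the delicate $n=4$ case, because for every normal index there are at least four lower indices and hence several competing instances of the relation displayed above, and it is exactly this redundancy that should collapse all differences $\theta_i-\theta_j$ to constants. A secondary technical point is to handle separately the degenerate subcases in which some coefficient of the form $\sin(\theta_i\pm\theta_j-2\theta_k)$ vanishes, treated just as in Cases 1.1--2.2 of Proposition \ref{propCSCn=4}.
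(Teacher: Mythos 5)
Your reduction is the same as the paper's: choose $A$ as in Example \ref{ex2}, split into the ``all angles equal'' and ``all angles mutually different'' branches via Proposition \ref{propCSC1}, kill the off-diagonal components via Proposition \ref{propCSC2}, and aim to show that in the second branch all $h^i_{jj}$ vanish, which is then incompatible with $n\geq 5$ mutually distinct angles. The gap is that you never actually establish this last vanishing: you propose to ``mirror Cases 1--3 of the proof of Proposition \ref{propCSCn=4}'' and explicitly defer the bookkeeping, expressing the hope that the abundance of indices makes it close. That is a plan, not a proof, and there is a concrete reason it does not transfer verbatim: the passage from \eqref{eqn=4.2} to \eqref{eqn=4.3} uses minimality in the form $h^i_{ii}=-(h^i_{jj}+h^i_{kk}+h^i_{\ell\ell})$, which only holds when $\{j,k,\ell\}$ exhausts the indices other than $i$, i.e.\ when $n=4$. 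For $n\geq 5$ the $e_i$-derivative of the determinant relation drags in components $h^i_{mm}$ with $m\notin\{i,j,k,\ell\}$, so the resulting equation is no longer a relation among the three unknowns of the $3\times 3$ system, and the whole case structure of the $n=4$ proof breaks down.

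The paper instead proves two clean lemmas that exploit $n\geq 5$ directly and avoid the case analysis. \emph{Step 2 (existence):} for each fixed $i$ there is some $j\neq i$ with $h^i_{jj}=0$. If not, the last factor of the determinant \eqref{det} would have to vanish for \emph{every} choice of $j,k,\ell$; regarding it as a function $f_1(\theta_\ell)$ satisfying $f_1''+f_1=0$, having at least two zeros incongruent modulo $\pi$ (this is exactly where $n\geq 5$ is used) forces $f_1\equiv 0$, and iterating the same trick in $\theta_k$ yields $\sin(\theta_i+\theta_j)=0$ for all $j\neq i$, i.e.\ all $\theta_j$ with $j\neq i$ coincide modulo $\pi$ --- a contradiction. \emph{Step 1 (propagation):} if $h^i_{jj}=0$ for one $j\neq i$ then $h^i_{jj}=0$ for all $j$; assuming $h^1_{22}=0$ but $h^1_{33}\neq 0$, three applications of \eqref{CSCeq1} give $2\theta_2=2\theta_4=2\theta_5=\theta_1+\theta_3 \bmod \pi$, forcing two of $\theta_2,\theta_4,\theta_5$ to agree modulo $\pi$ --- again using $n\geq 5$ and again a contradiction. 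These two steps together give total geodesy, and neither appears in your proposal; without them (or a worked-out substitute) the argument is incomplete.
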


\begin{proof}
	Choose $A \in \mathcal A$ as in Example \ref{ex2}. By Proposition \ref{propCSC1}, we know that the angle functions are either all the same modulo $\pi$ or all mutually different modulo $\pi$ and that in the former case, the immersion is totally geodesic. Hence, assume that all angle functions are mutually different modulo $\pi$. We know from Proposition \ref{propCSC2} that all components $h_{ij}^k$ of the second fundamental form, where $i$, $j$ and $k$ are mutually different, vanish. Hence, it suffices to show that the components of the second fundamental form for which at least two indices are the same also vanish.
	
	\textit{Step 1: For any fixed $i$, if there exists a $j \neq i$ such that $h_{jj}^i=0$, then $h_{jj}^i=0$ for all $j$.} Without loss of generality, we assume that $i=1$ and $j=2$, so $h_{22}^1=0$. We will prove the claim by contradiction, so suppose that $h_{jj}^1 \neq 0$ for some $j$. Remark that if $h_{11}^1 \neq 0$, there will be a $j \neq 1,2$ for which $h_{jj}^1 \neq 0$. Indeed, this follows from the fact that $h_{11}^1 + h_{22}^1 + h_{33}^1 + \cdots + h_{nn}^1 = 0$ due to minimality. For simplicity, we can hence assume that $h_{33}^1 \neq 0$. From \eqref{CSCeq1} for $i \geq 3$, $j=2$ and $k=1$, we obtain $h_{ii}^1 \sin(\theta_i+\theta_1-2\theta_2) = 0$. For $i=3$, this implies that $\theta_3+\theta_1-2\theta_2 = 0 \mod\pi$ and for $i \geq 4$, this implies that $h_{44}^1 = h_{55}^1 = \cdots h_{nn}^1 = 0$, since all the angle functions are mutually different modulo $\pi$. A similar argument, but now for $j=4$, respectively $j=5$, instead of $j=2$, yields $\theta_3+\theta_1-2\theta_4 = 0 \mod\pi$, respectively $\theta_3+\theta_1-2\theta_5 = 0 \mod\pi$. Summarizing the relations between the angle functions, we have that $2\theta_2$, $2\theta_4$ and $2\theta_5$ are all equal to $\theta_1+\theta_3$ modulo $\pi$. This means that at least two of the angle functions $\theta_2$, $\theta_4$ and $\theta_5$ must be equal modulo $\pi$, which is a contradiction.
	
	\textit{Step 2.  For any fixed $i$, there exists a $j \neq i$ such that $h_{jj}^i=0$.} It follows from \eqref{CSCeq1} that for any choice of mutually different indices $j$, $k$ and $\ell$, all different from $i$, the variables $h_{jj}^i$, $h_{kk}^i$ and $h_{\ell\ell}^i$ satisfy the system \eqref{eqn=4.1}.
	It suffices to show that for some choice of $j$, $k$ and $\ell$, the determinant of this system is non zero. Indeed, this will imply that for that particular choice of $j$, $k$ and $\ell$, one has $h_{jj}^i = h_{kk}^i = h_{\ell\ell}^i = 0$. The determinant of the system is given by \eqref{det} and it is clear that the only factor which could possibly be zero is the last one. We assume that this factor is zero for all choices of mutually different $j$, $k$ and $\ell$, different from $i$, and we will prove a contradiction. Define the function $f_1(\theta):=\cos(\theta_i+\theta_j-\theta_k-\theta)+\cos(\theta_i-\theta_j+\theta_k-\theta)+\cos(\theta_i-\theta_j-\theta_k+\theta)$. Since it satisfies the differential equation $f_1''+f_1=0$, we can write it as $f_1(\theta)=a_1\sin(\theta+b_1)$ for some constants $a_1$ and $b_1$ depending on $\theta_i$, $\theta_j$ and $\theta_k$. By our assumption, $f_1(\theta)=0$ for at least two values of $\theta$ which are different modulo $\pi$. This implies that $a_1=0$ and hence that $f_1=0$ identically. In particular, $f_1(0)=\cos(\theta_i+\theta_j-\theta_k)+\cos(\theta_i-\theta_j+\theta_k)+\cos(\theta_i-\theta_j-\theta_k)=0$ for all mutually different $j$ and $k$, different from $i$. We can basically repeat the same argument by defining the function
	$f_2(\theta):=\cos(\theta_i+\theta_j-\theta)+\cos(\theta_i-\theta_j+\theta)+\cos(\theta_i-\theta_j-\theta)$. Since $f_2''+f_2=0$, we have $f_2(\theta)=a_2\sin(\theta+b_2)$ for some constants $a_2$ and $b_2$ depending on $\theta_i$ and $\theta_j$. Since $f_2(\theta)=0$ for at least two values of $\theta$ which are different modulo $\pi$, we conclude that $a_2=0$ and hence $f_2=0$ identically. Since $f_2(\theta)=(\cos(\theta_i+\theta_j)+2\cos(\theta_i-\theta_j))\cos\theta+\sin(\theta_i+\theta_j)\sin\theta$, we obtain that $\sin(\theta_i+\theta_j)=0$ for all $j$ different from $i$ modulo $\pi$. This implies that all angles $\theta_j$, with $j \neq i$, are equal modulo $\pi$, a contradiction.
\end{proof}

\subsection{Proof of Theorem~\ref{theoCSCconclusion}} Combining the results in Sections 5.2-5.5, we obtain the classification result in  Theorem~\ref{theoCSCconclusion}.

%%%%%%%%%%%%%%%%%%%%%%%%%%%%%%%%%%%%%%%%%%%%%%%%%%%%%%%%%%%%%%%%%%%%%%%%%%%%%%%%%%%
\section{Minimal Lagrangian submanifolds of $Q^n$ with $n-1$ equal angle functions}
%%%%%%%%%%%%%%%%%%%%%%%%%%%%%%%%%%%%%%%%%%%%%%%%%%%%%%%%%%%%%%%%%%%%%%%%%%%%%%%%%%%

In this section, we give  the proof of Theorem  \ref{theoAllButOneSameAngles}, which is a classification theorem for minimal Lagrangian submanifolds of $Q^n$ with $n-1$ equal angle functions.
We use the same notations as in Section 3. Case (i) and Case (ii) follow immediately from Corollary \ref{cor2}. In Case (iii), we have that $\alpha$ is not a constant modulo $\pi$. We first show that $M^n$ must be a warped product $I\times_\rho S^{n-1}(1)$ with $\rho(\alpha)=|c_1(\sin n\alpha)^{-\frac{1}{n}}|$ for some positive constant $c_1$, and the angle function $\alpha$ satisfies the first order  ordinary differential equation \eqref{6.1}. First,  as $\theta_1=(n-1)\alpha \mod\pi$ and $\theta_2=\cdots=\theta_n=-\alpha \mod\pi$, we obtain that the one-form $s$ vanishes on $TM^n$ from Corollary \ref{cor1}. It follows from \eqref{intcond2} that $h_{ij}^k=0$ for any $i$ and any mutually different $j,k\in\{2,\ldots,n\}$. Using \eqref{intcond1}, for any $i\in\{2,\ldots,n\}$,  as $n\geq 3$, we have that $h_{ii}^i=e_i(\theta_i)=e_i(\theta_k)=h_{kk}^i=0$ and $h_{11}^i=e_i(\theta_1)=-(n-1)e_i(\theta_k)=-(n-1)h_{kk}^i=0$ for any $k\in\{2,\ldots,n\}$, different from $i$. Therefore, we obtain that the only non-zero components of the second fundamental form are $h_{11}^1=(n-1)e_1(\alpha)$ and $h_{22}^1=\cdots=h_{nn}^1=-e_1(\alpha)$ and $e_k(\alpha)=0$ for any $k\in\{2,\ldots,n\}$.

By applying \eqref{intcond2} again, we obtain $g(\nabla_{e_1}e_1,e_k)=\omega_1^k(e_1)=0$ for any $k\in\{2,\ldots,n\}$, which means that the distribution spanned by $e_1$ is autoparallel. We also have $g(\nabla_{e_k}e_j,e_1)=\omega_j^1(e_k)=-\cot{(n\alpha)}h_{jk}^1=\cot{(n\alpha)}e_1(\alpha)\delta_{jk}$ for any mutually different $j,k\in\{2,\ldots,n\}$, and since $e_k(\alpha)=0$ for any $k\in\{2,\ldots,n\}$, we obtain that the distribution spanned by $\{e_2,\ldots,e_n\}$ is spherical. Hence, by applying a theorem of Hiepko \cite{Hiepko} (see also a general result in \cite{Nolker}), we conclude that $M^n$ is a warped product $I\times_\rho N^{n-1}$ and the warping function $\rho$ satisfies that $\frac{e_1(\rho)}{\rho}=-\cot{(n\alpha)}e_1(\alpha)$. Hence, $\rho=|c_1(\sin n\alpha)^{-\frac{1}{n}}|$ for some positive constant $c_1$.

We can further show that $N^{n-1}$ has positive constant sectional curvature. In fact, the sectional curvature $K^N$ of the plane spanned by $e_j$ and $e_k$, for mutually different $j,k\in\{2,\ldots,n\}$, can be calculated as follows:
\begin{equation}\label{6.4}
	\begin{aligned}
		K^N_{jk}&=\rho^2(K_{jk}+(\frac{e_1(\rho)}{\rho})^2)\\
		&=\rho^2(2\cos^2{(\theta_j-\theta_k)}+e_1(\theta_j)e_1(\theta_k)+(\frac{e_1(\rho)}{\rho})^2)\\
		&=\rho^2(2+e_1^2(\alpha)+(-\cot{(n\alpha)}e_1(\alpha))^2)\\
		&=(c_1(\sin (n\alpha))^{-\frac{1}{n}})^2(2+e_1^2(\alpha)(\sin{n\alpha})^{-2}),
	\end{aligned}
\end{equation}
where we used \eqref{seccurv} in the second equality.
For any $k\in\{2,\ldots,n\}$, by considering the  $Je_k$-component of the Codazzi equation for $(X,Y,Z)=(e_k,e_1,e_1)$, we get that
\begin{equation}\label{6.5}
	e_1(e_1(\alpha))-(n+1)\cot{(n\alpha)}(e_1(\alpha))^2-\sin{(2n\alpha)}=0.
\end{equation}
We take the derivative of $K^N_{jk}$ with respect to $e_1$ and obtain that
\begin{equation*}
	e_1(K^N_{jk})=2c_1^2(\sin (n\alpha))^{-\frac{2}{n}-2}e_1(\alpha)\big(e_1(e_1(\alpha))-(n+1)\cot{(n\alpha)}(e_1(\alpha))^2-\sin{(2n\alpha)}\big)=0,
\end{equation*}
which means that $N^{n-1}$ has constant sectional curvature $c_0$. From the expression \eqref{6.4}, we know that $c_0$ is a positive constant and we can always choose $c_1$ such that $c_0=1$.  Therefore, we have proved that $M^n$ is a warped product $I\times_\rho S^{n-1}(1)$ with $\rho(\alpha)=|c_1(\sin n\alpha)^{-\frac{1}{n}}|$ for some positive constant $c_1$, and the angle function $\alpha$ satisfies the first order ordinary differential equation \eqref{6.1}.

Secondly, we show that  $f$ is locally isometric to the Gauss map of a rotational hypersurface of $S^{n+1}(1)$ with the profile curve $\gamma(\theta)\subset S^2(1)$ given by \eqref{6.2}.
As $\theta_1=(n-1)\alpha \mod\pi$ and $\theta_2=\cdots=\theta_n=-\alpha \mod\pi$, from the proof of Theorem \ref{theo1}, we know that $M$ is the Gauss map of a  hypersurface of $S^{n+1}(1)$ with principal curvatures given by
\begin{equation}\label{6.6}
	\lambda_1=\cot{(\theta_1)}=\cot{((n-1)\alpha)}\neq\lambda_2=\cdots=\lambda_n=\cot{(\theta_2)}=-\cot{(\alpha)},
\end{equation}
which immediately implies that $M$ is the Gauss map of a  rotational hypersurface of $S^{n+1}(1)$  by applying a theorem of do Carmo and Dajczer (see  Theorem 4.2 in \cite{CD1983}).
In order to finish the proof of Case (iii), we only need to check that the principal curvatures of a rotational hypersurface of $S^{n+1}(1)$ with the profile curve $\gamma(\theta)\subset S^2(1)$ given by \eqref{6.2} and  $\alpha$ satisfying \eqref{6.3} are the same as that in \eqref{6.6}. This can be verified straightforwardly by applying the formulas of the principal curvatures computed in \cite{LiMaWei}, we omit the details here.

Finally, we note that  the differential equation \eqref{6.3} is equivalent to \eqref{6.5}.
If we define a new parameter $s$ by $\frac{d s}{d\theta}=-\frac{1}{\sqrt{2}}\sqrt{1-(\frac{d\alpha}{d\theta})^2}(\sin{(n\alpha)})^{-1}$,
then $\alpha$ satisfies the following differential equation with respect to the new parameter $s$:
\begin{equation}
	\frac{d^2\alpha}{ds^2}-(n+1)\cot{(n\alpha)}(\frac{d\alpha}{ds})^2-\sin{(2n\alpha)}=0.
\end{equation}
We can also calculate the length of $\frac{d}{d s}$ directly by using the induced metric of  the Gauss map and find that $\frac{d}{d s}$  is a unit vector. Hence, $\frac{d}{d s}=\pm e_1$, which implies the equivalence of \eqref{6.3} and \eqref{6.5}.
This completes the proof of Case (iii). Therefore, we finish the proof of Theorem  \ref{theoAllButOneSameAngles}.

%%%%%%%%%%%%%%%%%%%%%%%%%%%%%%%%%%%%%%%%%%%%%%%%%%%%%%%%%%%%%%%%%%%%%%

%%%%%%%%%%%%%%%%%%%%%%%%%%%%%%%%%%%%%%%%%%%%%%%%%%%%%%%
%%% Acknowledgements. ÖÂÐ»
%%%%%%%%%%%%%%%%%%%%%%%%%%%%%%%%%%%%%%%%%%%%%%%%%%%%%%%

\end{document}